\newif\iffinal
\LetLtxMacro\todonotestodo\todo
\renewcommand{\todo}[2][]{\todonotestodo[backgroundcolor=yellow, #1]{TODO: {#2}}}
\iffinal\renewcommand{\todo}[2][]{}\fi
\DeclareMathOperator{\Ass}{Ass} 
\newcommand{\ic}[1]{\overline{#1}}
\newcommand{\ictwoideal}[1]{\ic{\twoideal{#1}}}
\newcommand{\longmid}{\,\,\middle\vert\,\,}
\newcommand{\mingens}[2][]{\mathsf{G}\ifthenelse{\isempty{#1}}{(#2)}{\!\left(#2\right)}}
\newcommand{\mingensast}[2][]{\mathsf{G}^{\ast}\!\ifthenelse{\isempty{#1}}{(#2)}{\!\left(#2\right)}}
\newcommand{\N}{\mathbb{N}}
\newcommand{\twoideal}[1]{(#1)}
\newcommand{\wdd}[1]{\mathsf{d}_{#1}}
\newcommand{\wdeg}[1]{\deg_{#1}}
\newcommand{\red}[1]{{#1}_{\anchor}}
\newcommand{\K}{\mathsf{k}}
\newcommand{\xy}{\bullet}
\newcommand{\white}[1]{\color{white}#1\color{black}}
\newcommand{\leftpart}{L}
\newcommand{\middlepart}{M}
\newcommand{\rightpart}{R}
\newcommand{\combpart}[1]{\mathsf{C}_{#1}}
\newcommand{\Apart}{\mathsf{A}}
\newcommand{\Hpart}{\mathsf{H}}
\newcommand{\Bpart}{\mathsf{B}}
\newcommand{\Cpart}{\mathsf{C}}
\newcommand{\anchor}{\mathbin{\scalebox{0.5}{\begin{tikzpicture}[scale=0.15]
                                \draw[very thick] (0,1.6) circle(0.3);
                                \draw[very thick] (-0.6,0.7) -- (0.6,0.7);
                                \draw[very thick] (0,1.4) -- (0,-1);
                                \draw[very thick] (0,-1) to [out =140, in =250] (-1,0.2);
                                \draw[very thick] (0,-1) to [out =40, in =-70] (1,0.2);
\end{tikzpicture}}}}
\newcommand{\ylink}{\mathbin{\raisebox{-0.85pt}{\scalebox{0.5}{\begin{tikzpicture}[scale=0.22]
                                \draw (0,0) circle(1.1);
                                \node at (0,0) {$\boldsymbol{y}$};
                                \end{tikzpicture}}}}}
\newcommand{\bigylink}{\mathop{\hbox{\raisebox{-5pt}{\begin{tikzpicture}
                                \node at (0,0) {\rule{0pt}{1.7ex}};
                                \draw[thick] (0,0) circle(0.24);
                                \node at (0,0) {\tiny $\boldsymbol{y}$};
                                \end{tikzpicture}}}}}
\newcommand{\xlink}{\mathbin{\raisebox{-0.85pt}{\scalebox{0.5}{\begin{tikzpicture}[scale=0.22]
                                \draw (0,0) circle(1.1);
                                \node at (0,0) {$\boldsymbol{x}$};
                                \end{tikzpicture}}}}}
\newcommand{\dotlink}{\mathbin{\raisebox{-0.85pt}{\scalebox{0.5}{\begin{tikzpicture}[scale=0.22]
                                \draw (0,0) circle(1.1);
                                \node at (0,0) {$\xy$};
                                \end{tikzpicture}}}}}
\renewcommand{\odot}{\ylink}
\renewcommand{\bigodot}{\bigylink}
\DeclareMathOperator{\dist}{dist}
\newcommand{\maxset}[2]{\dist_{#1}\!#2}
\newcommand{\maxsetdeg}[3][]{\dist_{#2}\ifthenelse{\isempty{#1}}{(#3)}{\!\left(#3\right)\!}}
\renewcommand\fbox{\fcolorbox{gray}{white}}
\newtheorem{proposition}{Proposition}[section]
\newtheorem{theorem}[proposition]{Theorem}
\newtheorem{corollary}[proposition]{Corollary}
\newtheorem{lemma}[proposition]{Lemma}
\theoremstyle{definition}
\newtheorem{definition}[proposition]{Definition}
\newtheorem{example}[proposition]{Example}
\newtheorem{notation}[proposition]{Notation}
\newtheorem{convention}[proposition]{Convention}
\newtheorem{fact}[proposition]{Fact}
\newtheorem{remark}[proposition]{Remark}
\newtheorem*{rem*}{Remark}
\numberwithin{equation}{section}
\newcommand\footnoteref[1]{\protected@xdef\@thefnmark{\ref{#1}}\@footnotemark}
\renewcommand{\fnum@figure}{Fig.~\thefigure}
\author{Jutta Rath}
\address{Institut für Mathematik\\Alpen-Adria-Universität Klagenfurt\\
  Universitätsstraße 65-67\\9020 Klagenfurt am Wörthersee\\Austria}
\email{\href{mailto:jutta.rath@aau.at}{jutta.rath@aau.at}}
\author{Roswitha Rissner}
\address{Institut für Mathematik\\Alpen-Adria-Universität Klagenfurt\\
  Universitätsstraße 65-67\\9020 Klagenfurt am Wörthersee\\Austria}
\email{\href{mailto:roswitha.rissner@aau.at}{roswitha.rissner@aau.at}}
\thanks{This research was funded in part by the Austrian
  Science Fund (FWF) [10.55776/DOC78]. For open access purposes, the
  authors have applied a CC BY public copyright license to any
  author-accepted manuscript version arising from this submission.}
\title[Large powers of bivariate monomial ideals]{Minimal generating
  sets of large powers of bivariate monomial ideals}
\keywords{Minimal generating sets, number of generators, bivariate monomial ideals, Hilbert polynomial, ideal reduction number, powers of monomial ideals}
\subjclass[2020]{13C99, 
        13E15, 
        68W30, 
        13F20, 
        13A15, 
        13B22
}
\newcolumntype{b}{>{\columncolor{gray!10}}r}
\newcolumntype{R}{>{\flushright}p{3.2em}}
\newcommand{\coloropacity}{0.2}
\newcommand{\whiteopacity}{0.6}
\newlength{\pointdim}
\newlength{\unitdim}
\newcommand{\offset}{0.2}
\newcommand*{\Jstring}{0/4,9/0}%
\newcommand{\fillColoredLatticePoints}[1]{
  \foreach \x in {0,...,{\xmax}} {
    \foreach \y in {0,...,{\ymax}} {
      \coordinate (point) at ($\scalefactor*(\x,\y)$);
      \pgfmathparse{(\x >= \xa+\xshift && \y >= \yshift|| (\x>=\xshift &&\y >= \yb+\yshift))}
      \ifnum\pgfmathresult=1
        \draw [blue, fill](point) circle (0.5pt);
      \else
        \pgfmathparse{#1>=1 && (\x -\xshift)/\xa + (\y-\yshift)/\yb >= 1 && \x>=\xshift &&\y >=\yshift} 
        \ifnum\pgfmathresult=1
          \draw [fill, magenta](point) circle (0.5pt);
        \else
          \draw [fill, black](point) circle (0.3pt);
        \fi
      \fi
    }
  }
}
\newcommand{\drawonestep}[1]{
  \coordinate (shift) at ($\scalefactor*(\xshift, \yshift)$);
  \coordinate (g1) at ($\scalefactor*(0,\yb) + (shift)$);
  \coordinate (g2) at ($\scalefactor*(\xa,0) + (shift)$);

  \node [fill = blue, circle, inner sep=1pt] at (g1) {};
  \node [fill = blue, circle, inner sep=1pt] at (g2) {};

  \draw [blue, thick]
  let
  \p1 = (g1),
  \p2 = (g2),
  \p3 = (xend),
  \p4 = (yend),
  \p5 = (shift),
  \p6 = ($(\p3)+(0,\y5)$),
  \p7 = ($(\p4)+(\x5,0)$),
  in
  (\p6) -- (\p2) -- (\x2, \y1) -- (\p1) -- (\p7);

  \draw [fill=blue, opacity=0.2]
  let
  \p1 = (g1),
  \p2 = (g2),
  \p3 = (xend),
  \p4 = (yend),
  \p5 = (shift),
  \p6 = ($(\p3)+(0,\y5)$),
  \p7 = ($(\p4)+(\x5,0)$),
  in
  (\p6) -- (\p2) -- (\x2, \y1) -- (\p1) -- (\p7) -- (\x6,\y7);

  \pgfmathparse{#1 > 0} 
   \ifnum\pgfmathresult=1
    \draw [magenta, thick](g1) -- (g2);
    \draw [fill=magenta, opacity=0.2]
    let
    \p1 = (g1),
    \p2 = (g2),
    in
    (\p1) -- (\x2,\y1) -- (\p2);
  \fi

}
\newcommand{\getJdim}{
    \foreach \x/\y [count=\j] in \Jstring{
       \coordinate (scp) at ($\scalefactor*(\x,\y)$);
        \ifnum\j=1\relax
          \coordinate (last) at ($\scalefactor*(\x,\y)$);
          \coordinate (yJ) at (last);
        \fi
      \coordinate (last) at (scp);
    }
    \coordinate (xJ) at (last);

}
\newcommand{\computeR}{

  \coordinate (unit) at (0,1);
  \coordinate (A) at (yJ);

  \pgfextracty\pointdim{\pgfpointanchor{A}{center}}
  \pgfextracty\unitdim{\pgfpointanchor{unit}{center}}
  \pgfmathsetmacro{\ydimJ}{\pointdim/\unitdim}

  \pgfmathsetmacro{\found}{0}

  \pgfmathsetmacro{\r}{0}
  \foreach \rval in {1,...,\power}{
    \pgfmathparse{\rval*\scalefactor*\yb > \ydimJ}
    \ifnum\pgfmathresult=1
      \breakforeach
    \fi
    \coordinate (rval) at (0,\rval);
  }
  \pgfextracty\pointdim{\pgfpointanchor{rval}{center}}
  \pgfmathsetmacro{\r}{{\pointdim/\unitdim}}
}
\newcommand{\computeParameters}[1][0]{
  \coordinate (xJ) at (0,0);
  \coordinate (yJ) at (0,0);
  \pgfmathsetmacro{\r}{0}
  \pgfmathparse{#1 >=1} 
  \ifnum\pgfmathresult=1
    \getJdim
    \computeR
  \fi

  \pgfmathparse{(\power*(\xa + \xshift) + 1.5)}
  \pgfmathsetmacro{\xmax}{\pgfmathresult}
  \pgfmathparse{(\power*(\yb+ \yshift) + 1.5)}
  \pgfmathsetmacro{\ymax}{\pgfmathresult}

  \coordinate (xaxismax) at ($\scalefactor*(\xmax,0)+\scalefactor*(0.5,0)+(xJ)$);
  \coordinate (yaxismax) at ($\scalefactor*(0,\ymax)+\scalefactor*(0,0.5)+(yJ)$);

  \coordinate (xend) at ($\scalefactor*(\xmax,0)+(xJ)$);
  \coordinate (yend) at ($\scalefactor*(0,\ymax)+(yJ)$);
}
\newcommand{\drawAxes}{
  \coordinate (origin)   at (0,0);

  \draw [thin, gray,-latex] (origin) -- (xaxismax);
  \draw [thin, gray,-latex] (origin) -- (yaxismax);

}
\newcommand{\drawShift}[1]{
  \coordinate (shift) at ($\scalefactor*(\xshift, \yshift)$);
  \coordinate (g1) at ($\scalefactor*(0,\yb) + (shift)$);
  \coordinate (g2) at ($\scalefactor*(\xa,0) + (shift)$);
  \coordinate (g) at ($\scalefactor*(\xshift, \yshift)$);
  \pgfmathparse{\xshift>0 || \yshift>0}
  \ifnum\pgfmathresult=1
    \node [fill, black, circle, inner sep=1pt, label={[xshift=-3.5pt, yshift=-2pt,black, scale=0.9]45:#1}] at (g) {};
    \draw [dotted] (g1) -- (g) -- (g2);
  \fi
}
\newcommand{\drawStairs}[2][0]{
  \coordinate (shift) at ($\scalefactor*(\xshift, \yshift)$);
  \coordinate (g1) at ($\scalefactor*(0,\yb) + (shift)$);
  \coordinate (g2) at ($\scalefactor*(\xa,0) + (shift)$);

  \pgfmathparse{ #2==1 || #2==2} 
  \ifnum\pgfmathresult=1
    \pgfmathparse{ #1==1 } 
    \ifnum\pgfmathresult=1
      \fill [gray, opacity=0.1]
       let
       \p1=($\power*(g1)$),
       \p2 = ($(\x1,0)+(yend)$),
       \p3 = (xend),
        in
        (\p2) -- (\p1) -- (\x3,\y1) -- (\x3,\y2);
    \fi
    \draw [black, thick]
     let
      \p1=($\power*(g1)$),
      in
      ($(\x1,0)+(yend)$) -- (\p1);
  \fi

  \pgfmathparse{ #2==1 || #2==3} 
  \ifnum\pgfmathresult=1
    \draw[black, thick]
     let
      \p1=($\power*(g2)$),
     in
     ($(0,\y1)+(xend)$) -- (\p1);
  \fi
  \foreach \i in {0,...,\power} {
    \draw [black, fill]($\i*(g1) + \power*(g2) - \i*(g2)$) circle (0.9pt);
    \pgfmathparse{\i<\power}
    \ifnum\pgfmathresult=1
      \pgfmathparse{ #1==1} 
      \ifnum\pgfmathresult=1
        \fill [gray, opacity=0.1]
         let
         \p1 = ($\i*(g1) + \power*(g2) - \i*(g2)$),
         \p2 = ($(\p1) +(g1) - (g2)$),
         \p3 = (xend),
          in
          (\p1) -- (\x1,\y2) -- (\x3,\y2) -- (\x3,\y1);
      \fi
      \draw [black, thick]
      let
      \p1 = ($\i*(g1) + \power*(g2) - \i*(g2)$),
      \p2 = ($(\p1) +(g1) - (g2)$),
      in
      (\p1) --(\x1,\y2) --(\p2);
    \fi
  }

}
\newcommand{\drawJshape}[1]{
  \coordinate (shift) at ($\scalefactor*(\xshift, \yshift)$);
  \coordinate (g1) at ($\scalefactor*(0,\yb) + (shift)$);
  \coordinate (g2) at ($\scalefactor*(\xa,0) + (shift)$);

  \getJdim
  \coordinate (J) at ($(xJ) +(yJ)$);

  \def\mycolours{{"Mulberry","OliveGreen","RoyalBlue","BurntOrange"}}

  \foreach \i [evaluate=\i as \usecolor using {\mycolours[Mod(\i,4)]}]in {0,...,\power} {
      \draw [\usecolor, fill=\usecolor, draw, fill opacity=\coloropacity]
      let
      \p1 = ($\i*(g1) + \power*(g2) - \i*(g2)$),
      \p2 = ($(\p1)+(J)$),
      in
      (\p1) -- (\x2,\y1) -- (\p2) -- (\x1,\y2) -- (\p1);
      \pgfmathparse{\i>=\r+1 && #1 >=1} 
      \ifnum\pgfmathresult=1
        \fill [white, opacity=\whiteopacity]
        let
        \p1 = ($\i*(g1) + \power*(g2) - \i*(g2)$),
        \p2 = ($(\p1) - \r*(g1)+ \r*(g2) $),
        \p3 = (J),
        \p4 = ($(\p1)+(\p3)$),
        \p5 = ($(\p2)+(0,\y3)$)
        in
       (\p5)--(\x4,\y5) -- (\p4) --(\x5,\y4);
      \fi
  }
  \pgfmathparse{#1 >=1}
   \ifnum\pgfmathresult=1
    \draw [white,fill, opacity=\whiteopacity]
        let
        \p1 = ($\power*(g2)$),
        \p2 = (J),
        \p3 = ($(\p1)+(\p2)$),
        \p4 = ($(\p1)+\r*(0,\y2)$),
        in
         (\p4) -- (\x4,\y3) -- (\p3);
   \fi
  \foreach \i [evaluate=\i as \usecolor using {\mycolours[Mod(\i,4)]}]in {0,...,\power} {
      \pgfmathparse{\i<=\power-\r && #1 >= 1} 
      \ifnum\pgfmathresult=1
        \draw [\usecolor, dotted, thick]
        let
        \p1 = ($\i*(g1) + \power*(g2) - \i*(g2)$),
        \p2 = (J),
        \p3 = ($(\p1)+(0,\y2)$),
        \p4 = ($(\p3)+\r*\scalefactor*(0,\yb)$),
         in
         (\p3)  --  (\p4);

        \draw[fill, \usecolor]
        let
        \p1 = ($\i*(g1) + \power*(g2) - \i*(g2)$),
        \p2 = (J),
        \p3 = ($(\p1)+(0,\y2)$),
         in
         (\p3) circle (1pt);
      \fi
  }

}
\newcommand{\drawJalone}[2]{
    \coordinate (start) at (#1);

  \foreach \x/\y [count=\j] in \Jstring{
    \coordinate (scp) at ($(start)+\scalefactor*(\x,\y)$);
    \ifnum\j=1\relax
      \coordinate (last) at ($(start) + \scalefactor*(\x,\y)$);
      \draw[#2] (start) -- (last);
    \fi
    \ifnum\j>1\relax
      \fill[#2, opacity=\coloropacity]
      let
      \p1 = (last),
      \p2 = (scp)
      in
      (start)  --(\p1) -- (\x2,\y1) -- (\p2);
      \draw[#2]
      let
      \p1 = (last),
      \p2 = (scp)
      in
      (\p1) -- (\x2,\y1) -- (\p2);
    \fi
    \draw[black, fill]  (last) circle (0.5pt);
    \draw[black, fill]  (scp) circle (0.5pt);
    \coordinate (last) at (scp);

  }
  \draw[#2] (start) -- (last);

}
\newcommand{\whiten}[2]{
  \coordinate (start) at (#1);
   \foreach \x/\y [count=\j] in \Jstring{
        \coordinate (scp) at ($(start)+\scalefactor*(\x,\y)$);
        \ifnum\j=1\relax
          \coordinate (last) at ($(start) + \scalefactor*(\x,\y)$);
          \draw [white, fill, opacity=#2]
           let
           \p1 = (yend),
           \p2 = (xend),
           \p3 = (scp),
           \p4 = ($(\p3) + (0.07,0)$),
           \p5 = ($(\p3) + (0,0.07)$),
           \p6 = ($(\x3,\y1)$),
           in
           (\p4) -- (\p5)--(\x3,\y6)-- (\x2,\y6) --(\x2,\y3);
        \else
          \fill [white, opacity=#2]
           let
           \p1 = (last),
           \p2 = (xend),
           \p3 = (scp),
           \p4 = ($(\p3) + (0.07,0)$),
           \p5 = ($(\p3) + (0,0.07)$),
           in
           (\p4) -- (\p5)--(\x3,\y1)-- (\x2,\y1) --(\x2,\y3);
        \fi
      \coordinate (last) at (scp);
  }

}
\newcommand{\drawFullJ}[2][0]{
  \coordinate (shift) at ($\scalefactor*(\xshift, \yshift)$);
  \coordinate (g1) at ($\scalefactor*(0,\yb) + (shift)$);
  \coordinate (g2) at ($\scalefactor*(\xa,0) + (shift)$);

  \def\mycolors{{"Mulberry","OliveGreen","RoyalBlue","BurntOrange"}}

  \foreach \i [evaluate=\i as \usecolor using {\mycolors[Mod(\i,4)]}]in {0,...,\power} {
    \coordinate (start) at ($\i*(g1) + \power*(g2) - \i*(g2)$);
    \pgfmathparse{#1==2} 
    \ifnum\pgfmathresult=1
      \def\usecolor{gray}
    \fi
    \drawJalone{start}{\usecolor}
  }
  \pgfmathparse{#2>0} 
  \ifnum\pgfmathresult=1
    \foreach \i [evaluate=\i as \usecolor using {\mycolors[Mod(\i,4)]}]in {0,...,\power} {
      \coordinate (start) at ($\i*(g1) + \power*(g2) - \i*(g2)$);
      \whiten{start}{#2}
    }
  \fi

  \pgfmathparse{#1 ==1} 
  \ifnum\pgfmathresult=1
    \getJdim
    \coordinate (J) at ($(xJ) +(yJ)$);
    \coordinate (start) at ($\scalefactor*(\power*\xa + 1 -\positionx, \power*\yb +1-\positiony)$);
    \drawJalone{start}{gray}
    \node at ($(start) + (J) + (\offset,\offset)$){};
    \draw[gray] ($(start)-(\offset,\offset)$) rectangle ($(start) + (J) + (\offset,\offset)$);
    \node[gray] at ($(start) + (J) -\scalefactor*4*(\offset,\offset)$) {$J$};
  \fi

}
\newcommand{\drawYSections}[1][1]{
  \pgfkeys{/pgf/number format/relative*={2}}
  \coordinate (shift) at ($\scalefactor*(\xshift, \yshift)$);
  \coordinate (g1)    at ($\scalefactor*(0,\yb) + (shift)$);
  \coordinate (g2)    at ($\scalefactor*(\xa,0) + (shift)$);

  \pgfmathparse{#1 >=1} 
  \ifnum\pgfmathresult=1
    \foreach \i in {\r,...,\power} {
      \draw [gray, thick, dashed]
        let
        \p1 = ($\i*(g1) + \power*(g2) - \i*(g2)$),
        \p2 = (xend),
        in
        (\x2,\y1) -- (-0.15,\y1) node [left, xshift=-2pt, scale=0.7] {$j=\pgfmathprintnumber{\i}$} ;
      }
   \else
    \foreach \i in {\r,...,\power} {
      \draw [gray, thick, dashed]
        let
        \p1 = ($\i*(g1) + \power*(g2) - \i*(g2)$),
        \p2 = (xend),
        in
        (\x2,\y1) -- (-0.15,\y1) node [left, xshift=-2pt, scale=0.7] {} ;
      }
    \fi

}
\begin{document}

\maketitle

\begin{abstract}
  It is known that for a monomial ideal $I$, the number of minimal
  generators, $\mu(I^n)$, eventually follows a polynomial pattern for
  increasing $n$.  In general, little is known about the power at
  which this pattern emerges. Even less is known about the exact form
  of the minimal generators after this power.  Let
  $s\ge \mu(I)(d^2-1)+1$, where $d$ is a constant bounded above by the
  maximal $x$- or $y$-degree appearing in the set $\mingens{I}$ of
  minimal generators of $I$.  We show that every higher power
  $I^{s+\ell}$ for any $\ell \ge 0$ can be constructed from certain
  subideals of $I^s$.  This provides an explicit description
  of~$\mathsf{G}(I^{s+\ell})$ in terms of $\mathsf{G}(I^s)$.  Given
  $\mingens{I^s}$, this construction significantly reduces
  computational complexity in determining larger powers of~$I$.  This
  further enables us to explicitly compute $\mu(I^n)$ for all $n\ge s$
  in terms of a linear polynomial in $n$.  We include runtime
  measurements for the attached implementation in SageMath.
\end{abstract}

\section{Introduction}
Powers of monomial ideals have been studied in many different
contexts. Brodmann~\cite{Brodmann:1979:analytic-spread,
  Brodmann:1979:asympstab} showed that the set $\Ass(R/I^n)$ of
associated primes stabilizes for large~$n$ and that
$\operatorname{depth}(R/I^n)$ eventually becomes constant.  Since
then, much research has extended these results.  For a recent survey,
we refer to Carlini, H\`a, Harbourne, and Van Tuyl's lecture
notes~\cite{Carlini-Ha-Harbourne-Tuyl:powers-of-ideals}.

When studying powers of a monomial ideal, its minimal generators play
a crucial role.  However, it is far from trivial to determine which of
the $n$-fold products of (minimal) generators of $I$ are minimal
generators of $I^n$ and very little is known for monomial ideals, even
in the bivariate case.

The existing research mostly focuses on the number $\mu(I^n)$ of
minimal generators of $I^n$ rather than the actual set of minimal
generators.  Indeed, the emphasis lies on small powers, as $\mu(I^n)$
is eventually described by a polynomial---the Hilbert polynomial of
the fiber ring of $I$.  Eliahou, Herzog, and
Saem~\cite{Eliahou-Herzog-Saem:2018:tiny-squares} studied the question
how small $\mu(I^2)$ can be in terms of $\mu(I)$ for a bivariate
monomial ideal $I$.  For any given $n\in\N$, Abdolmaleki and
Kumashiro~\cite{Abdolmaleki-Kumashiro:2021:gen-descend} construct a
bivariate monomial ideal~$I$ such that
$\mu(I)>\mu(I^2)>\cdots>\mu(I^n)$. Gasanova~\cite{Gasanova:2020:tiny-powers}
showed that for every $d$ there exists a monomial ideal $I$ in any
number of variables such that the inequality $\mu(I)>\mu(I^n)$ holds
for any $n\le d$.

While the unexpected behavior of small powers is fascinating, our
focus is set on large $n$.  The asymptotic behavior of the Hilbert
function gives reason to suspect that eventually the actual set of
minimal generators of $I^n$ behaves well, in the sense that
cancellations among the $n$-fold products of generators of $I$ can be
predicted.

The main objective of this paper is to describe the sets of minimal
generators of large powers of monomial ideals $I$ in
$\mathsf{k}[x,y]$, where $\mathsf{k}$ is a field.

\begin{wrapfigure}{r}{0.45\textwidth}
  \centering \begin{minipage}[t]{0.22\textwidth}
   \centering
  \begin{subfigure}{1\textwidth}
    \centering
\fbox{
    \begin{tikzpicture}
      \pgfmathsetmacro{\scalefactor}{0.23}
      \pgfmathsetmacro{\xa}{5}
      \pgfmathsetmacro{\yb}{4}
      \pgfmathsetmacro{\power}{1}
      \pgfmathsetmacro{\xshift}{0}
      \pgfmathsetmacro{\yshift}{0}

      \computeParameters
      \drawAxes

      \pgfmathsetmacro{\xa}{1}
      \pgfmathsetmacro{\yb}{2}
      \pgfmathsetmacro{\xshift}{0}
      \pgfmathsetmacro{\yshift}{1}
      \drawStairs[1]{2}

      \coordinate (left) at (g1);
      \pgfmathsetmacro{\xa}{3}
      \pgfmathsetmacro{\yb}{1}
      \pgfmathsetmacro{\xshift}{1}
      \pgfmathsetmacro{\yshift}{0}
      \drawStairs[1]{3}

      \fill[pattern color=PineGreen,  pattern=north west lines]
      let
      \p1 = ($(left)+(0.03, 0.03)$),
      \p2 = ($(g1)+(0.03, 0.03)$),
      \p3 = ($(g2)+(0.03, 0.03)$),
      \p4 = (xend),
      \p5 = (yend),
      in
      (\x1,\y5) -- (\p1) -- (\x2,\y1) -- (\p2) --(\x3,\y2) -- (\p3) -- (\x4, \y3) -- (\p4) -- (\x4,\y5);

      \draw[DarkOrchid, very thick, shorten >=1pt] (origin) -- (g2);
      \draw[-latex, shorten >=3pt] ($(g2) + (0.3,0.3)$)--(g2);
    \end{tikzpicture}
 }
    \caption{$I = (\textcolor{DarkOrchid}{x^4}, xy, y^3)$}
  \end{subfigure}

\vspace*{1pt}
  \begin{subfigure}{1\textwidth}
  \centering
\fbox{
    \begin{tikzpicture}
      \pgfmathsetmacro{\scalefactor}{0.23}
      \pgfmathsetmacro{\xa}{5}
      \pgfmathsetmacro{\yb}{3}
      \pgfmathsetmacro{\xshift}{0}
      \pgfmathsetmacro{\yshift}{0}
      \pgfmathsetmacro{\power}{1}

      \computeParameters
      \drawAxes

      \pgfmathsetmacro{\xa}{2}
      \pgfmathsetmacro{\yb}{2}
      \pgfmathsetmacro{\xshift}{0}
      \pgfmathsetmacro{\yshift}{0}
      \drawStairs[1]{1}

      \draw[PineGreen, very thick, shorten >=1pt] (origin) -- (g1);

      \draw[-latex, shorten >=3pt] ($(g1) + (0.3,0.3)$)--(g1);

      \fill[DarkOrchid, opacity=0.3]
      let
      \p2 = (g1),
      \p3 = (g2),
      \p4 = (xend),
      \p5 = (yend),
      in
      (\x2,\y5) -- (\p2) --(\x3,\y2) -- (\p3) -- (\x4, \y3) -- (\p4) -- (\x4,\y5);

    \end{tikzpicture}
}
  \caption{$J =(x^2,  \textcolor{PineGreen}{y^2})$}
  \end{subfigure}
\end{minipage}
\begin{minipage}[h]{0.22\textwidth}
  \centering
  \begin{subfigure}{1\textwidth}
    \centering
    \fbox{
    \begin{tikzpicture}
      \pgfmathsetmacro{\scalefactor}{0.23}
      \pgfmathsetmacro{\xa}{5}
      \pgfmathsetmacro{\yb}{6}
      \pgfmathsetmacro{\xshift}{0}
      \pgfmathsetmacro{\yshift}{0}
      \pgfmathsetmacro{\power}{1}

      \computeParameters
      \drawAxes

      \pgfmathsetmacro{\xa}{1}
      \pgfmathsetmacro{\yb}{2}
      \pgfmathsetmacro{\xshift}{0}
      \pgfmathsetmacro{\yshift}{3}
      \drawStairs[1]{2}

      \coordinate (left) at (g1);

      \pgfmathsetmacro{\xa}{3}
      \pgfmathsetmacro{\yb}{1}
      \pgfmathsetmacro{\xshift}{1}
      \pgfmathsetmacro{\yshift}{2}
      \drawStairs[1]{0}

      \def\col{PineGreen}
      \draw[\col, very thick]
      let
      \p1 = (g2)
      in
      (origin) -- (0,\y1);
      \draw[white, shorten <=1pt, shorten >=1pt]
      let
      \p1 = (g2)
      in
      (left) -- (0,\y1)--(g2);
      \draw[\col, dotted, very thick, shorten <=1pt, shorten >=1pt]
      let
      \p1 = (g2),
      \p2 = (xend),
      \p3 = (left),
      in
      (\p3) -- (0,\y1)--(\p1);
      \fill[pattern color=PineGreen,  pattern=north west lines]
      let
      \p1 = ($(left)+(0.03, 0.03)$),
      \p2 = ($(g1)+(0.03, 0.03)$),
      \p3 = ($(g2)+(0.03, 0.03)$),
      \p4 = (xend),
      \p5 = (yend),
      in
      (\x1,\y5) -- (\p1) -- (\x2,\y1) -- (\p2) --(\x3,\y2) -- (\p3) -- (\x4, \y3) -- (\p4) -- (\x4,\y5);

      \draw[\col, fill]
      let
      \p1 = (g2)
      in
      (0,\y1) circle (1pt);

      \pgfmathsetmacro{\xa}{2}
      \pgfmathsetmacro{\yb}{2}
      \pgfmathsetmacro{\xshift}{4}
      \pgfmathsetmacro{\yshift}{0}
      \drawStairs[1]{3}

      \def\col{DarkOrchid}
      \draw[\col, very thick]
      let
      \p1=(g1),
      in
      (origin) -- (\x1,0);
      \draw[white,  shorten <=1pt, shorten >=1pt]
      let
      \p1 = (g1)
      in
      (\p1) -- (\x1,0)--(g2);
      \draw[\col, dotted, very thick, shorten <=1pt, shorten >=1pt]
      let
      \p1 = (g1),
      \p2 = (yend),
      in
      (\p1) -- (\x1,0)--(g2);
      \draw[\col, fill]
      let
      \p1 = (g1)
      in
      (\x1,0) circle (1pt);
      \fill[DarkOrchid, opacity=0.3]
      let
      \p2 = (g1),
      \p3 = (g2),
      \p4 = (xend),
      \p5 = (yend),
      in
      (\x2,\y5) -- (\p2) --(\x3,\y2) -- (\p3) -- (\x4, \y3) -- (\p4) -- (\x4,\y5);

      \draw[-latex,shorten >=3pt] ($(g1) - (0.3,0.3)$)--(g1);

    \end{tikzpicture}
    }
    \caption{$I \odot J = \textcolor{PineGreen}{y^2}I + \textcolor{DarkOrchid}{x^4}J$}
  \end{subfigure}
\end{minipage}
  \caption{Visualization of the ideal link}
  \label{figure:link}
\end{wrapfigure}

Specifically, we show that there exists $s_0$ such that for all
$n\ge s\ge s_0$ every segment of the staircase diagram---and
consequently, the set of minimal generators---of $I^n$ is already
determined by the staircase diagram of $I^{s}$.  In other words, for
$n\ge s$, the staircase diagram of $I^n$ can be built by aligning the
staircase diagrams of certain subideals of~$I^{s}$.  We prove that
$s_0\le\mu(I)(d^2-1)+1$, where $d$ is a constant depending on the
degrees of the minimal generators of $I$ and is at most the maximal
$x$- or $y$-degree appearing in $\mingens{I}$.

To formalize the process of aligning staircases of ideals, we
introduce the concept of the \emph{link of ideals} with respect to
$y$, denoted by $\odot$ (Definition~\ref{definition:link}).  The
staircase of the ideal $I\odot J$ is obtained by shifting $I$ in
$y$-direction and $J$ in $x$-direction such that their staircases meet
in precisely one minimal generator.  This is illustrated in
Figure~\ref{figure:link}.

In Theorem~\ref{theorem:I-with-k-persistent-stabilizes} we show that
for all $s$ larger than an explicitly given power $s_0$, all powers
$I^{s+\ell}$ for any $\ell\ge0$ can be written as the link of specific
subideals of $I^s$.

It immediately follows that
$\mu(I^{s+\ell})=\mu(I^s)+\ell\cdot\left(\mu(I^{s+1})-\mu(I^{s})\right)$
for $s\ge s_0$ and $\ell\ge 0$, see Corollary~\ref{corollary:mu}.

Moreover, the minimal generators of $I\odot J$ can be determined by
$\mu(I) + \mu(J) - 1$ many additions of (monomial) exponents. Thus,
$\mingens[l]{I^{s+\ell}}$ can be computed from $\mingens[l]{I^{s}}$ in
$\mathcal{O}(\ell)$ additions of non-negative integers
(Corollary~\ref{corollary:runtime}).

This paper is structured as follows:
Section~\ref{section:preliminaries} summarizes the necessary
background about the integral closure of bivariate monomial ideals.
In Section~\ref{section:persistent-generators}, we establish that the
lattice points on the boundary of the Newton polyhedron of $I$ play a
special role among the minimal generators of powers of $I$.  We call
them (weakly) persistent generators
(Definitions~\ref{def:persistent-generators}
and~\ref{def:weakly-persistent}), as their powers remain minimal
generators of all powers of $I$.  The main result of this section is
Theorem~\ref{theorem:I^D+l}.  This theorem is a refinement of the
known fact that the ideal generated by the persistent generators is a
reduction of
$I$~\cite[Proposition~2.1]{singla:2006:monomial-reduction}. It allows
us to break down large enough powers of $I$ into sums of simpler
components, using its weakly persistent generators.  In
Section~\ref{section:staircase-factors} we describe the minimal
generators of each such component separately
(Theorem~\ref{theorem:generators-one-segment}) and then determine
their sum in Theorem~\ref{theorem:gluing-more-segments}.
Section~\ref{section:min-gens} combines the results of
Section~\ref{section:persistent-generators} and
Section~\ref{section:staircase-factors} to the main results of this
paper (Theorem~\ref{theorem:I-with-k-persistent-stabilizes} and
Corollary~\ref{corollary:mu}).  We provide an implementation to
compute $I^{s+\ell}$ in SageMath\footnote{The program code associated
  with this paper is available as ancillary file from the arXiv page
  \url{https://arxiv.org/abs/2503.21466} of this paper.}. We conclude
the paper with examples and runtime measurements in practice in
Section~\ref{sec:runtime}.

We suspect that powers of ideals in more than two variables eventually
show analogous patterns. The immediate challenge is the generalization
of the link operation, as it strongly depends on the bivariate
structure.

We point out that the presented approach is not suitable to determine
minimal generators of small ideal powers.

\section{Preliminaries}\label{section:preliminaries}

\begin{wrapfigure}{r}{0.5\textwidth} \centering
\captionsetup{width=.9\linewidth}
  \begin{minipage}[t]{.15\textwidth}
  \begin{tikzpicture}[baseline=(current bounding box.north)]
    \pgfmathsetmacro{\scalefactor}{0.2}
    \pgfmathsetmacro{\xa}{4}
    \pgfmathsetmacro{\yb}{7}
    \pgfmathsetmacro{\xshift}{0}
    \pgfmathsetmacro{\yshift}{0}
    \pgfmathsetmacro{\power}{1}

    \computeParameters
    \drawAxes
    \drawonestep{1}
    \fillColoredLatticePoints{1}
    
    \node[blue, scale=0.9] at (\xa*\scalefactor,-0.2) {$a$};
    \node[blue, scale=0.9] at (-0.2, \yb*\scalefactor) {$b$};
  \end{tikzpicture}
\end{minipage}
\begin{minipage}[t]{.15\textwidth}
  \begin{tikzpicture}[baseline=(current bounding box.north)]
    \pgfmathsetmacro{\scalefactor}{0.2}
    \pgfmathsetmacro{\xa}{4}
    \pgfmathsetmacro{\yb}{7}
    \pgfmathsetmacro{\xshift}{2}
    \pgfmathsetmacro{\yshift}{1}
    \pgfmathsetmacro{\power}{1}
    
    \computeParameters
    \drawAxes
    \drawonestep{1}
    \fillColoredLatticePoints{1}
    \drawShift{$g$}
  \end{tikzpicture}
\end{minipage}
 \caption{Left: The integral closure of $(x^a, y^b)$ adds
    all lattice points above the line connecting $(a,0)$ and
   $(0,b)$ (red). Right: Visualization of Fact~\ref{fact:ic}(2).}
 \label{figure:integral-closure}
\end{wrapfigure}
Throughout this paper we frequently identify a monomial $x^ay^b$ with
the lattice point $(a,b)\in\mathbb{R}^2$.  Note that in the bivariate
case, the minimal generators of an ideal are always of the form
$x^{a_i}y^{b_i}$ with
\vspace{-2ex}

\begin{minipage}{0.45\textwidth}
  \begin{align*}
    a_0&<a_1<\dots<a_n,\text{ and }\\
    b_0&>b_1>\dots>b_n.
  \end{align*}
\end{minipage}

We always assume that $I$ is not a principal ideal.  Moreover, many
properties of monomial ideals, such as the number of minimal
generators, remain invariant under shifts.  Therefore, when it
simplifies notation, we divide $I$ by the greatest common divisor of
all its monomials, denoted by $\gcd(I)$.

\begin{definition}
  For a monomial ideal $I$ , we denote $\red{I}\coloneqq I:\gcd(I)$.
  We call a monomial ideal $I$ \emph{anchored}, if $\red{I}=I$.
\end{definition}

\begin{definition}
  For a monomial ideal $I$, the \emph{integral closure} $\ic{I}$ is
  defined as the set of all monomials $f$ for which there exists
  $n\in\N$ with $f^n\in I^n$.
\end{definition}

\begin{notation}
  We denote by $(E)$ the ideal generated by the set
  $E\subseteq \K[x,y]$.
\end{notation}

\begin{fact}[{cf.~\cite[Proposition~1.4.6]{Huneke-Swanson:2006:integral-closure}}]\label{fact:ic}
  For $a$, $b\in \N_0$ and a monomial $g\in \K[x,y]$ the following are
  equivalent:
  \begin{enumerate}
  \item $x^uy^v\in \ictwoideal{x^a, y^b}$
  \item $g\cdot x^{u}y^{v}\in \ic{\twoideal{g\cdot x^{a}, g\cdot y^{b}}}$
  \item $\frac{u}{a} + \frac{v}{b} \ge 1$
  \end{enumerate}
  Moreover, $\frac{u}{a} + \frac{v}{b} = 1$ implies
  $x^uy^v\in \mingens[l]{\ictwoideal{x^a, y^b}}$.
\end{fact}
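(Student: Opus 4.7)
The plan is to view $(x^a,y^b)^n$ as the explicit monomial ideal $\bigl(x^{a(n-i)}y^{bi} \longmid 0\le i\le n\bigr)$ and reduce the three conditions to a single convex-geometric inequality on $(u,v)$. All of (1), (2), (3) reflect membership of $(u,v)$ in the Newton polyhedron of $\{(a,0),(0,b)\}$, namely the region $\{(u,v)\in\mathbb{R}_{\ge 0}^2 \longmid \frac{u}{a}+\frac{v}{b}\ge 1\}$; each equivalence should fall out of this perspective.

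First I would settle $(1)\Leftrightarrow(3)$ directly from the definition. By definition $x^uy^v\in\ictwoideal{x^a,y^b}$ means $x^{un}y^{vn}\in(x^a,y^b)^n$ for some $n$, and since this is a monomial ideal, divisibility by one of its generators yields integers $i\in\{0,\dots,n\}$ with $un\ge a(n-i)$ and $vn\ge bi$. Eliminating $i$ gives $\tfrac{u}{a}+\tfrac{v}{b}\ge 1$, so $(1)\Rightarrow(3)$ is immediate. For the converse I would take $n=ab$ (or any common multiple), define $i=\lceil vn/b\rceil$, and verify the two inequalities; the only mild subtlety is arranging for the chosen $i$ to be an integer in $[0,n]$, which is where the Archimedean choice of $n$ is used.

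Next, $(1)\Leftrightarrow(2)$ is the statement $\ic{g\cdot J}=g\cdot\ic{J}$ for a principal ideal $(g)$ and any monomial ideal $J$, specialized to $J=(x^a,y^b)$. This follows at once from the definition: $(gf)^n=g^nf^n$ lies in $(gJ)^n=g^nJ^n$ if and only if $f^n\in J^n$, so $gf\in\ic{gJ}\iff f\in\ic{J}$. No convex geometry is needed here beyond the fact that $g^n$ is a non-zero-divisor in $\K[x,y]$.

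For the ``moreover'' clause, assume $\tfrac{u}{a}+\tfrac{v}{b}=1$. If $x^uy^v$ failed to be a minimal generator of $\ictwoideal{x^a,y^b}$, it would be properly divisible by some $x^{u'}y^{v'}\in\ictwoideal{x^a,y^b}$ with $u'\le u$, $v'\le v$ and $(u',v')\neq(u,v)$. By the already established equivalence $(1)\Leftrightarrow(3)$, this forces $\tfrac{u'}{a}+\tfrac{v'}{b}\ge 1=\tfrac{u}{a}+\tfrac{v}{b}$, which contradicts strict inequality in at least one coordinate. I expect the only real obstacle in the whole argument to be the careful integer bookkeeping in the $(3)\Rightarrow(1)$ direction; the rest is routine once the Newton-polyhedron picture is in hand.
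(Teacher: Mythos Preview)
The paper does not actually prove this statement: it is recorded as a \emph{Fact} with a citation to Huneke--Swanson and no argument is supplied, so there is nothing in the paper to compare your proof against. Your argument is correct and is essentially the standard Newton-polyhedron proof.

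Two minor remarks. In the $(3)\Rightarrow(1)$ direction, with $n=ab$ the quantity $vn/b=va$ is already an integer, so the ceiling is superfluous; the constraint $i\le n$ then amounts to $v\le b$, which you may assume after first disposing of the trivial case $v\ge b$ (where $y^b\mid x^uy^v$ and membership in the ideal itself is immediate) and symmetrically $u\ge a$. In the ``moreover'' clause you should also note explicitly that $x^uy^v$ lies in $\ictwoideal{x^a,y^b}$ before arguing minimality, but this is exactly $(3)\Rightarrow(1)$ with equality, so nothing new is needed.
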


\begin{definition}
  Let $f$, $g$, $h\in \K[x,y]$ be monomials such that
  $f\notin \twoideal{g,h}$. We say that $f$ \emph{lies between} $g$
  and $h$ if
  \begin{align*}
    \min\{\deg_xg, \deg_xh\} &< \deg_xf \text{ and}\\
    \min\{\deg_yg, \deg_yh\} &< \deg_yf.
  \end{align*}
\end{definition}

\begin{remark}\label{remark:integral-closure-geometrically}
  Let $f$ be a monomial in between two other monomials, $g$ and $h$,
  in $\K[x,y]$.  Geometrically, Fact~\ref{fact:ic} says that $f$ is in
  the integral closure of $(g,h)$ if and only if it lies above the
  line passing through $g$ and $h$, see
  Figure~\ref{figure:integral-closure}.
\end{remark}

Throughout, we use $\xy$ as a placeholder for either $x$ or
$y$, with the choice remaining fixed within a given context.
\begin{definition}
  Let $I\subseteq\K[x,y]$ be a monomial ideal. We define the
  \emph{$\xy$-distance} $\maxset{\xy}{I}$ of $I$ as
  \begin{align*}
    \maxset{\xy}{I}&\coloneqq\max\{\deg_{\xy}g\mid g\in \mingens{\red{I}}\}.
  \end{align*}
  For a set of monomials $E\subseteq\K[x,y]$, we define
  $\maxset{\xy}{E}$ as $\dist_{\xy}$ of the ideal $(E)$.
\end{definition}

\begin{remark}\label{remark:dist-sum}
  \begin{enumerate}
  \item The $\xy$-distance of an anchored ideal $I$ is the distance
    from the origin to the Newton polyhedron of $I$ measured along the
    $\xy$-axis.
  \item Let $f$, $g$, and $h$ be monomials such that $f$ lies between
    $g$ and $h$. Then
    $\maxsetdeg{\xy}{h,f}+\maxsetdeg{\xy}{f,g} =
    \maxsetdeg{\xy}{h,g}$.
  \item
    $\maxset{\xy}{I}=\max\{\deg_{\xy}g\mid g\in
    \mingens{I}\}-\min\{\deg_{\xy}g\mid g\in \mingens{I}\}$.
  \end{enumerate}
\end{remark}

\begin{definition}\label{definition:deg-gh}
  Let $g$ and $h$ be two monomials. We define the non-standard grading
  $\wdeg{g, h}$ additively by setting
  \begin{align*}
    \wdeg{g,h}(x) &\coloneqq \maxsetdeg{y}{g,h}\text{ and}\\
    \wdeg{g,h}(y) &\coloneqq \maxsetdeg{x}{g,h}.
  \end{align*}
  We set $\wdd{g,h} \coloneqq \wdeg{g,h}(g)= \wdeg{g,h}(h)$.
\end{definition}

\begin{remark}\label{remark:wdd-line}
  \begin{enumerate}
  \item $\wdd{x^a,y^b} = ab$
  \item Let $g$ and $h$ be two monomials and
    $x^{\alpha}y^{\beta}\coloneqq\gcd(g,h)$. Then
    \begin{equation*}
      \wdeg{g,h}(x^uy^v) \ge \wdd{g,h} \quad\Longleftrightarrow\quad  \frac{u-\alpha}{\maxsetdeg{x}{g,h}} + \frac{v-\beta}{\maxsetdeg{y}{g,h}}\ge 1,
    \end{equation*}
    where equality holds simultaneously on both sides. Note that here,
    $x^uy^v$ does not necessarily lie between $g$ and $h$.
  \item Geometrically, the equivalence in (2) states
    $\wdeg{g,h}(x^uy^v) \ge \wdd{g,h}$ if and only if $x^uy^v$ lies
    above the line through $g$ and $h$. Equality holds if it is
    exactly on the line.
  \end{enumerate}
\end{remark}

\begin{lemma}\label{lemma:ic-and-wdeg}
  Let $f$, $g$, $h\in \K[x,y]$ be monomials such that $f$ lies between
  $g$ and~$h$.

  Then the following assertions are equivalent:
  \begin{enumerate}
  \item $f \in \ictwoideal{g,h}$
  \item $\wdeg{g,h}(f) \ge \wdd{g,h}$
  \item $\wdeg{f,h}(g) \le \wdd{f,h}$
  \item $\wdeg{g,f}(h) \le \wdd{g,f}$
  \end{enumerate}

  Moreover, $\wdeg{g,h}(f) = \wdd{g,h}$ implies $f \in \mingens[l]{\ictwoideal{g,h}}$.
\end{lemma}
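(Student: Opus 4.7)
The plan is to reduce the lemma to elementary geometry of the triangle with vertices $g$, $f$, $h$ in $\mathbb{R}^2$. First I would tighten the ``lies between'' hypothesis: assuming without loss of generality that $\deg_x g < \deg_x h$ (and hence $\deg_y g > \deg_y h$ by the bivariate ordering convention), the conditions that $f$ lies between $g$ and $h$ and that $f \notin \twoideal{g,h}$ together force
\begin{equation*}
  \deg_x g < \deg_x f < \deg_x h \quad\text{and}\quad \deg_y h < \deg_y f < \deg_y g.
\end{equation*}
In particular, each of the pairs $(g,h)$, $(f,h)$, and $(g,f)$ is ``oppositely ordered'' in the coordinates, which is exactly the condition needed for $\wdd{g,h}$, $\wdd{f,h}$, and $\wdd{g,f}$ to be well-defined in the sense of Definition~\ref{definition:deg-gh}.

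For (1) $\Leftrightarrow$ (2) I would factor out $\gcd(g,h) = x^{\alpha}y^{\beta}$ to write $\ictwoideal{g,h} = \gcd(g,h)\cdot\ictwoideal{x^A, y^B}$ with $A = \maxsetdeg{x}{g,h}$ and $B = \maxsetdeg{y}{g,h}$. Applying Fact~\ref{fact:ic} to $f/\gcd(g,h) = x^{\deg_x f - \alpha}y^{\deg_y f - \beta}$ yields
\begin{equation*}
  f \in \ictwoideal{g,h}\ \Longleftrightarrow\ \frac{\deg_x f - \alpha}{A} + \frac{\deg_y f - \beta}{B} \ge 1,
\end{equation*}
and the right-hand side is exactly the inequality appearing in Remark~\ref{remark:wdd-line}(2), i.e.\ $\wdeg{g,h}(f) \ge \wdd{g,h}$. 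The same chain also settles the ``moreover'' claim: the equality case in Fact~\ref{fact:ic} matches the equality case in Remark~\ref{remark:wdd-line}(2), so $\wdeg{g,h}(f) = \wdd{g,h}$ places $f/\gcd(g,h)$ in $\mingens[l]{\ictwoideal{x^A,y^B}}$ and hence $f$ in $\mingens[l]{\ictwoideal{g,h}}$.

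For (2) $\Leftrightarrow$ (3) $\Leftrightarrow$ (4) I would appeal to Remark~\ref{remark:wdd-line}(3): condition (2) says that $f$ lies on or above the line through $g$ and $h$; condition (3) says that $g$ lies on or below the line through $f$ and $h$; condition (4) says that $h$ lies on or below the line through $g$ and $f$. All three express the same sign condition on the signed area of the triangle $(g, h, f)$, and the algebraic verification amounts to adding a common term such as $(\deg_y g - \deg_y f)(\deg_x f - \deg_x g)$ to both sides of one inequality and regrouping to recover another. I expect nothing genuinely difficult---only careful sign bookkeeping, which is clean because all coordinate differences involved have fixed signs by the strict inequalities established at the outset. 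The one subtlety to watch is that the $\wdd$-symbols appearing in (3) and (4) are indeed defined, which is precisely what the between-hypothesis provides.
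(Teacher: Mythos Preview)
Your proposal is correct and follows essentially the same route as the paper's proof: (1)$\Leftrightarrow$(2) via Fact~\ref{fact:ic} and Remark~\ref{remark:wdd-line}, and (2)$\Leftrightarrow$(3)$\Leftrightarrow$(4) via the geometric interpretation in Remark~\ref{remark:wdd-line}(3). The paper compresses all of this into two sentences; you have simply unpacked the details (the reduction to strict coordinate inequalities, the explicit gcd-shift, and the signed-area reading of the three line conditions). One small wording point: the implication $\deg_x g < \deg_x h \Rightarrow \deg_y g > \deg_y h$ is not a ``convention'' but is forced by the hypothesis that $f$ lies between $g$ and $h$ (otherwise $g\mid h$ and then $g\mid f$, contradicting $f\notin(g,h)$)---exactly the argument you need anyway to justify that $\wdd{f,h}$ and $\wdd{g,f}$ are nondegenerate.
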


\begin{proof}
  (1) $\Leftrightarrow$ (2) follows from
  Remark~\ref{remark:wdd-line}(2) and~Fact~\ref{fact:ic}.
  (2)~$\Leftrightarrow$~(3) and (2)~$\Leftrightarrow$~(4) follow from
  Remark~\ref{remark:wdd-line}(3).
\end{proof}

\section{The role of persistent generators}\label{section:persistent-generators}
To analyze higher powers of monomial ideals we introduce the notion of
\emph{(weakly) persistent generators}.  They are monomials whose
powers continue to appear among the minimal generators of $I^n$ for
every $n$.  We use these generators to decompose large powers of $I$
into simpler components.

\begin{definition}\label{def:persistent-generators}
  Let $I$ be a monomial ideal in $\K[x,y]$. We say $f\in \mingens{I}$
  is a \emph{persistent generator of $I$} if
  $f\notin \ictwoideal{g,h}$ for all monomials $g$,
  $h\in I\setminus \{f\}$. We denote the set of all persistent
  generators of $I$ by $P(I)$.
\end{definition}

\begin{remark}\label{remark:persistent-geom}
  \begin{enumerate}
  \item The minimal generators of $I$ with maximal $x$-degree and
    $y$-degree, respectively, are persistent.
  \item The persistent generators of $I$ are the corners of the Newton polyhedron of~$I$.
  \item If $f\in P(I)$, then $f^n\in\mingens[l]{I^n}$ for any $n\in\N$.
  \end{enumerate}
\end{remark}

\begin{definition}\label{def:weakly-persistent}
  Let $I$ be a monomial ideal in $\K[x,y]$. We say that
  $f\in\mingens{I}$ is \emph{weakly persistent}, if
  $f^n\in\mingens{I^n}$ holds for all $n\in\N$. We denote by
  $P^{\ast}(I)$ the set of all weakly persistent generators of $I$.
\end{definition}

\begin{remark}
  By Remark~\ref{remark:persistent-geom}(3), the inclusion
  $P(I)\subseteq P^{\ast}(I)$ holds. Let $g_1$, \dots, $g_{k+1}$ be
  the persistent generators of $I$, ordered in descending $y$-degree.
  With Proposition~\ref{proposition:non-minimal-powers} below we will
  see that
  \begin{equation*}
    P^{\ast}(I)=P(I)\cup \left\{f\in \mingens{I}\longmid \wdeg{g_i, g_{i+1}}(f) = \wdd{g_i, g_{i+1}}\text{ for some }1\le i\le k\right\}\!.
  \end{equation*}
  Hence, the weakly persistent generators correspond to the lattice
  points on the boundary of the Newton polyhedron of $I$, see
  Remark~\ref{remark:wdd-line}(3).
\end{remark}

By definition, $f\in\ic{(g,h)}$ is equivalent to $f^n\in (g,h)^n$ for
some $n$. In the next proposition, we explicitly determine such an
$n$.  In addition, we show that if $f\notin\ic{(g,h)}$, a similar
relation holds among the three polynomials.

\begin{proposition}\label{proposition:non-minimal-powers}
  Let $g$, $h$, $f\in \K[x,y]$ with $f$ lying between $g$ and $h$, and
  define
  \begin{equation*}
    \alpha\coloneqq\maxsetdeg{\xy}{f,h}\quad\text{and}\quad n\coloneqq\maxsetdeg{\xy}{g,h}.
  \end{equation*}
  Then the following assertions hold:
  \begin{enumerate}
  \item If $f\in \ic{(g,h)}$ then $g^{\alpha}h^{n-\alpha} \mid f^{n}$
    and hence $f^{n} \in (g,h)^{n}$.
  \item If $f\notin\ic{(g,h)}$ then $f^{n}\mid g^{\alpha}h^{n-\alpha}$
    and hence $g^{\alpha}h^{n-\alpha} \in (f)^{n}$.
  \end{enumerate}
  Moreover, $\wdeg{g, h}(f) = \wdd{g, h}$ if and only if there exist
  $n\in \N$ and $0\le k\le n$ such that $g^{k}h^{n-k} = f^n$.
\end{proposition}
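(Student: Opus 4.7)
The plan is to reduce the proposition to a direct degree calculation in lattice coordinates. Write $g=x^{g_1}y^{g_2}$, $h=x^{h_1}y^{h_2}$, $f=x^{f_1}y^{f_2}$; since $f\notin(g,h)$ the monomials $g$ and $h$ are incomparable under divisibility, so by the symmetry $g\leftrightarrow h$ I may assume $g_1<h_1$ (and hence $g_2>h_2$). By the symmetry $x\leftrightarrow y$ I may further take the placeholder to be $\dist_\xy=\dist_x$. The hypothesis that $f$ lies between $g$ and $h$ then forces $g_1<f_1<h_1$ and $h_2<f_2<g_2$, giving $n=h_1-g_1$, $\alpha=h_1-f_1$, and $n-\alpha=f_1-g_1$.

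The crux is the identity
\begin{equation*}
\alpha g_1+(n-\alpha)h_1=(h_1-f_1)g_1+(f_1-g_1)h_1=f_1(h_1-g_1)=nf_1,
\end{equation*}
showing that the $x$-degrees of $g^{\alpha}h^{n-\alpha}$ and $f^{n}$ always agree. Consequently, divisibility between these two monomials in either direction is decided purely by the $y$-degrees, and $\alpha g_2+(n-\alpha)h_2$ is exactly $n$ times the value at $x=f_1$ of the line through $(g_1,g_2)$ and $(h_1,h_2)$. Hence $g^{\alpha}h^{n-\alpha}\mid f^{n}$ iff $f$ lies on or above that line, while $f^{n}\mid g^{\alpha}h^{n-\alpha}$ iff $f$ lies on or below. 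Remark~\ref{remark:integral-closure-geometrically} identifies ``on or above'' with $f\in\ictwoideal{g,h}$, so these two statements are exactly (1) and (2), and the displayed containments $f^{n}\in(g,h)^{n}$ and $g^{\alpha}h^{n-\alpha}\in(f)^{n}$ follow at once.

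For the moreover statement, if $\wdeg{g,h}(f)=\wdd{g,h}$ then by Remark~\ref{remark:wdd-line}(3) $f$ lies precisely on the line, so the $y$-degree comparison becomes an equality and $g^{\alpha}h^{n-\alpha}=f^{n}$ with $k=\alpha$ and $n$ as above. Conversely, $g^{k}h^{n-k}=f^{n}$ forces $kg_i+(n-k)h_i=nf_i$ for $i=1,2$, exhibiting $(f_1,f_2)$ as the affine combination $\tfrac{k}{n}(g_1,g_2)+(1-\tfrac{k}{n})(h_1,h_2)$ and hence on the line through $g$ and $h$; by the same remark this gives $\wdeg{g,h}(f)=\wdd{g,h}$. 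The only delicate point is keeping the two WLOG-reductions in sync, but both symmetries preserve the identity $\alpha g_1+(n-\alpha)h_1=nf_1$, which is really what does all the work.
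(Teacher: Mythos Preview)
Your proof is correct and follows essentially the same approach as the paper's: both show that the $x$-degrees of $g^{\alpha}h^{n-\alpha}$ and $f^{n}$ coincide by direct computation, then reduce the divisibility question to comparing $y$-degrees via the line criterion for the integral closure. The only cosmetic difference is that the paper first divides out $\gcd(g,h)$ to reduce to $g=y^{b}$, $h=x^{a}$, whereas you keep general coordinates and absorb that reduction into your WLOG assumptions; your appeal to Remarks~\ref{remark:integral-closure-geometrically} and~\ref{remark:wdd-line}(3) plays the same role as the paper's direct use of Fact~\ref{fact:ic}.
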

\begin{proof}
  Recall that $n-\alpha = \maxsetdeg{\xy}{g,f}$
  (Remark~\ref{remark:dist-sum}(1)).  Since $f$ lies between~$g$ and~$h$
  it follows that all three monomials are divisible by
  $\gcd(g,h)$.  Therefore, we can assume that $g = y^b$, $h = x^a$, and
  $f=x^uy^v$ with $0<u<a$ and $0<v<b$.
  It suffices to show the assertions for $\xy=x$.
  It follows that $n=a$ and $\alpha=a-u$. Note that
  \begin{equation*}
    \deg_x\!\big(x^{au}y^{b(a-u)}\big) = au = \deg_x\!\left(f^a\right)\!.
  \end{equation*}
  By Fact~\ref{fact:ic}, $f\in\ic{(g,h)}$ if and only if
  $\frac{u}{a} + \frac{v}{b} \ge 1$ which, in turn, is equivalent to
  \begin{equation*}
    \deg_y\!\big(x^{au}y^{b(a-u)}\big) = b(a-u) \le av = \deg_y\!\left(f^a\right)\!.
  \end{equation*}
  For the last assertion note that the right-hand side holds if and
  only if
  \begin{equation*}
    ak = un \text{ and } b(n-k) = vn,
  \end{equation*}
  which is equivalent to $\frac{u}{a} + \frac{v}{b} = 1$. The latter
  is equivalent to the left-hand side,
  cf.~Remark~\ref{remark:wdd-line}(2).
\end{proof}

\begin{remark}
  Note that in the second assertion of
  Proposition~\ref{proposition:non-minimal-powers}, the assumption
  that $f\notin\ic{(g,h)}$ implies that the equality
  $\wdeg{g, h}(f) = \wdd{g, h}$ cannot hold. In particular, we have
  that $f^n\neq g^{\alpha}h^{n-\alpha}$.
\end{remark}

\begin{remark}
  The bound $\min\{\maxsetdeg{x}{g,h}, \maxsetdeg{y}{g,h}\}$ for $n$
  in Proposition~\ref{proposition:non-minimal-powers} is sharp, see
  Example~\ref{example:bound-ic} below.
\end{remark}

\begin{example}\label{example:bound-ic} Let $g=y^5$ and $h=x^6$. Then
  $\min\{\maxsetdeg{x}{g,h}, \maxsetdeg{y}{g,h}\}=5$.
  \begin{enumerate}
  \item The monomial $f=x^5y$ is an element of $\ic{(g,h)}$ for which
    one can easily verify that $f^n\notin(x^6,y^5)^n$ for $n\le 4$.
  \item The monomial $f=xy^4$ lies between $g$ and $h$ such that
    $f\notin\ic{(g,h)}$. A straightforward computation shows that
    there is no $n< 5$ such that there exist $\alpha$, $\beta$ with
    $\alpha+ \beta=n$ and $g^{\alpha}h^{\beta}\in(f)^{n}$.
  \end{enumerate}
\end{example}

\begin{notation}\label{notation:delta}
  Let $F=\{g_1,\dots, g_{k+1}\}$ be a set of monomials such that
  $g_1$, \dots, $g_{k+1}$ are ordered in descending $y$-degree.  We
  set
  \begin{equation*}
    \delta_F \coloneqq \max_{1\le i\le k}\!\big\{\min\{\maxsetdeg{x}{g_i, g_{i+1}}, \maxsetdeg{y}{g_i, g_{i+1}}\}\big\} - 1.
  \end{equation*}
\end{notation}

\begin{remark}
  \begin{enumerate}
  \item If $g$ lies between $g_1$ and $g_{k+1}$, then
    $\delta_{F}\ge\delta_{F\cup\{g\}}$.
  \item If $f\in\mingens{I}$ is not weakly persistent, then
    Proposition~\ref{proposition:non-minimal-powers} implies that
    $f^{\delta_{P(I)}}\notin\mingens{I^{\delta_{P(I)}}}$.
  \end{enumerate}
\end{remark}

By definition, powers of weakly persistent generators remain minimal
generators. Proposition~\ref{proposition:non-minimal-powers} yields a
description of powers of all elements of $I$:

\begin{corollary}\label{corollary:non-minimal-generators}
  Let $I$ be a monomial ideal in $\K[x,y]$ and let
  $P(I)\subseteq P\subseteq P^{\ast}(I)$ such that
  $P=\{g_1,\dots, g_{k+1}\}$ and $g_1,$ \dots, $g_{k+1}$ are ordered
  in descending $y$-degree.  For every $n\ge \delta_P$ and $f\in I$
  there exist $1\le i\le k$ and $a\le \delta_P$ such that
  \begin{equation*}
    f^{n} \in \twoideal{g_i,g_{i+1}}^{n-a}\cdot f^{a}.
  \end{equation*}
\end{corollary}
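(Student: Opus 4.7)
My plan is to locate $f$ geometrically among the persistent generators, invoke Proposition~\ref{proposition:non-minimal-powers}(1) to push a small power of $f$ into an ideal generated by two consecutive persistent generators, and finally perform Euclidean division to reach the desired form. For the locating step I will use two facts: the minimal generators of a bivariate monomial ideal have strictly increasing $x$-degrees and strictly decreasing $y$-degrees, and the persistent generators $g_1,\dots,g_{k+1}$ are precisely the corners of the Newton polyhedron of~$I$. These together force $\deg_x g_i < \deg_x f < \deg_x g_{i+1}$ for some~$i$, since $\deg_x f = \deg_x g_j$ for some persistent $g_j$ would imply $f = g_j$, $g_j\mid f$, or $f\mid g_j$, all impossible as $f \in N(I)$ is a minimal generator distinct from $g_j$. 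The same monotonicity forces $\deg_y g_{i+1} < \deg_y f < \deg_y g_i$, so $f$ lies between $g_i$ and~$g_{i+1}$. Because the segment from $g_i$ to $g_{i+1}$ is an edge of the Newton polyhedron and $f \in I$, the exponent of $f$ lies on or above this segment, which by Remark~\ref{remark:integral-closure-geometrically} (equivalently Lemma~\ref{lemma:ic-and-wdeg}) gives $f \in \ictwoideal{g_i,g_{i+1}}$.

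Having located $f$, I set $n' \coloneqq \min\{\dist_x(g_i,g_{i+1}),\dist_y(g_i,g_{i+1})\}$; by Notation~\ref{notation:delta}, $n' \le \delta_I + 1$. Proposition~\ref{proposition:non-minimal-powers}(1), applied with the direction achieving this minimum, yields $f^{n'} \in \twoideal{g_i, g_{i+1}}^{n'}$. Writing $n = mn' + r$ with $m \ge 0$ and $0 \le r < n'$, I then obtain
\[
f^n = (f^{n'})^m \cdot f^r \in \twoideal{g_i,g_{i+1}}^{mn'}\cdot f^r = \twoideal{g_i,g_{i+1}}^{n-r}\cdot f^r,
\]
so setting $a \coloneqq r$ produces $a \le n'-1 \le \delta_I$, while the hypothesis $n \ge \delta_I$ guarantees $n - a \ge 0$.

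The only non-routine step is the first one: verifying that every non-persistent minimal generator lies in the integral closure of the pair of consecutive persistent generators flanking it. This rests on the identification of persistent generators with the corners of the Newton polyhedron and on the geometric reading of integral closure. Everything thereafter is a mechanical combination of Proposition~\ref{proposition:non-minimal-powers}(1) and division with remainder.
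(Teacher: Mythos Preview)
Your proof is correct and follows essentially the same route as the paper: locate $f$ in $\ictwoideal{g_i,g_{i+1}}$ for consecutive persistent generators, apply Proposition~\ref{proposition:non-minimal-powers}(1) with $d=\min\{\maxsetdeg{x}{g_i,g_{i+1}},\maxsetdeg{y}{g_i,g_{i+1}}\}$, and finish by Euclidean division of $n$ by $d$. Your geometric argument for the locating step (via the Newton polyhedron edges) is in fact more explicit than the paper's, which simply asserts that the witnesses $g,h$ from the definition of $N(I)$ can be taken to be consecutive persistent generators.
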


\begin{proof}
  For elements in $P$, the assertion trivially holds.  Note that the
  elements $f\in I\setminus P$ are not persistent.  By definition,
  there exist $g$, $h\in I\setminus\{f\}$ with $f\in \ic{(g,h)}$.  We
  can choose $g$ and $h$ to be in $P$ such that no other element of
  $P$ lies between them, meaning $g=g_i$ and $h= g_{i+1}$ for some
  $1\le i \le k$.  We write
  $d\coloneqq \min\{\maxsetdeg{x}{g_i, g_{i+1}}, \maxsetdeg{y}{g_i,
    g_{i+1}}\}$. Then $n\ge\delta_P\ge d-1$ and we can write $n=qd+a$
  with $q\in \N_0$ and $a\le d-1$. It follows from
  Proposition~\ref{proposition:non-minimal-powers}(1) that
  \begin{equation*}
    f^{n} = f^{qd+a} \in
    \twoideal{g_i,g_{i+1}}^{qd}\cdot f^{a}.
  \end{equation*}
\end{proof}

Corollary~\ref{corollary:non-minimal-generators} immediately yields:
\begin{corollary}\label{corollary:reduction-number}
  Let $I\subseteq\K[x,y]$ be a monomial ideal, $n\in \N$, and let
  $P(I)\subseteq P\subseteq P^{\ast}(I)$.  Then every minimal
  generator of $I^n$ is of the form
  \begin{equation}\label{eq:gen-form-1}
    \prod_{g\in P}g^{\ell_g}\cdot \prod_{f\in \mingens{I}\setminus P}f^{k_f},
  \end{equation}
  where $0\le k_f \le \delta_P$ for all $f\in \mingens{I}\setminus P$
  and $\sum_g\ell_g + \sum_f k_f = n$.
\end{corollary}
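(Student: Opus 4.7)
The plan is to show that any factorization of a monomial $m\in I^n$ as an $n$-fold product of minimal generators of $I$ can be modified — one non-persistent generator at a time — so that each exponent $k_f$ is at most $\delta_I$. The step that lowers $k_f$ either preserves $m$ or produces a proper monomial divisor of $m$ that lies in $I^n$, and the latter is incompatible with $m$ being a minimal generator of $I^n$.

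First, I would record the obvious point that every element of $I^n$ admits some representation of the form~(\ref{eq:gen-form-1}) with $\sum_g \ell_g + \sum_f k_f = n$, since $I^n$ is generated as an ideal by $n$-fold products of elements of $\mingens{I} = P(I) \cup N(I)$. The content of the corollary is that, for a \emph{minimal} generator $m$ of $I^n$, such a representation exists with $k_f \le \delta_I$ for every $f \in N(I)$.

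Suppose then that some $k_f$ in a chosen representation satisfies $k_f > \delta_I$. Applying Corollary~\ref{corollary:non-minimal-generators} with $n$ replaced by $k_f$, I obtain indices $1 \le i \le k$ and an integer $0 \le a \le \delta_I$ for which $f^{k_f} \in (g_i,g_{i+1})^{k_f-a}\cdot f^a$, so there exist non-negative integers $\beta_1,\beta_2$ with $\beta_1+\beta_2=k_f-a$ and a monomial $m'$ satisfying $f^{k_f} = g_i^{\beta_1}g_{i+1}^{\beta_2}f^a\,m'$. Substituting this identity into the factorization of $m$ gives $m = m''\cdot m'$, where $m''$ is again a product of exactly $n$ elements of $\mingens{I}$ (the count is preserved thanks to $\beta_1+\beta_2+a = k_f$), and in the new factorization the exponent of $f$ has dropped to $a \le \delta_I$.

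Since $m''\in I^n$ and $m''$ divides $m$, minimality of $m$ in $I^n$ forces $m'=1$, so that $m=m''$ is a representation of $m$ with the exponent of $f$ at most $\delta_I$. Iterating over each $f\in N(I)$ with $k_f>\delta_I$ (the successive substitutions only enlarge the exponents of persistent generators and leave every other $k_{f'}$ unchanged, so previously-reduced exponents stay reduced) produces the required representation. The main obstacle is really just the bookkeeping needed to verify that $m''$ is a genuine element of $I^n$ rather than of a smaller power, which is settled by the degree identity $\beta_1+\beta_2+a=k_f$; once this is in hand, the minimality argument and the termination of the iteration are immediate.
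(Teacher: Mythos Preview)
Your argument is correct and is precisely the unpacking of what the paper means by ``Corollary~\ref{corollary:non-minimal-generators} immediately yields''; the paper gives no further details, and your iterative reduction together with the observation that $m''\in I^n$ divides the minimal generator $m$ (hence $m''=m$) is exactly the intended mechanism.
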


\begin{example}
  Let $I = (g,f,h)$ with $g=y^5$, $f=x^5y$, and $h=x^6$ from
  Example~\ref{example:bound-ic}(1) with $P = P(I) = \{g,h\}$.  Since
  $f^5 = x g h^4$, it follows that, for instance, the monomial
  $g^{20}h^3f^{12} = g^{20}h^3f^{2\cdot 5+2}\in I^{35}$ is divisible
  by $g^{20+2} \cdot h^{3+8} \cdot f^{2}\in I^{35}$.
\end{example}

\begin{remark}\label{remark:reduction-number}
  Singla~\cite[Proposition~2.1]{singla:2006:monomial-reduction}
  established that the ideal $\mathfrak{a}$, generated by the
  persistent generators $P(I)$, is a reduction of $I$. That is, there
  exists $\delta\in \N$ such that for all $n\in\N_0$,
  \begin{equation*}
    I^{\delta+n}=\mathfrak{a}^{n}I^{\delta}.
  \end{equation*}
  Singla's result applies to monomial ideals in any number of
  variables.  Through Corollary~\ref{corollary:reduction-number} we
  recover Singla's result for the bivariate case.  We further show
  that $\delta$ can be chosen to be at most
  $|\mingens{I}\setminus P(I)|\cdot\delta_{P(I)}$.  The minimal choice
  for $\delta$ is called \emph{reduction number} of $I$ with respect
  to $\mathfrak{a}$. This number measures from which power on the
  ideal $\mathfrak{a}$ determines the behavior of all further powers
  of $I$.
\end{remark}

With Proposition~\ref{proposition:non-minimal-powers}(2), we further
refine the statement of Corollary~\ref{corollary:reduction-number} in
Theorem~\ref{theorem:I^D+l} below.

\begin{notation}\label{notation:dI}
  For a monomial ideal $I$ and $P(I)\subseteq P\subseteq
  P^{\ast}(I)$. We set
  \begin{equation*}
    \label{eq:dI}
    d_P \coloneqq
    \begin{cases}
      \min\{\maxset{x}{I}, \maxset{y}{I}\}-2, &\text{if }|P|>2,\\
      0, &\text{if }|P|=2.
    \end{cases}
  \end{equation*}
  Note that $|P|>2$ implies that
  $\min\{\maxset{x}{I}, \maxset{y}{I}\} \ge 2$.
\end{notation}

\begin{theorem}\label{theorem:I^D+l}
  Let $I$ be a monomial ideal in $\K[x,y]$ and let
  $P(I)\subseteq P\subseteq P^{\ast}(I)$ such that
  $P=\{g_1,\dots, g_{k+1}\}$ and $g_1,$ \dots, $g_{k+1}$ are ordered
  in descending $y$-degree.  Further, let
  $D \ge (\mu(I)-|P|)\cdot\delta_P + |P|\cdot d_P$.  Then for all
  $\ell \ge 0$,
  \begin{equation*}
    I^{D+\ell} = \sum_{i=1}^{k}(g_i, g_{i+1})^{\ell}I^D.
  \end{equation*}
\end{theorem}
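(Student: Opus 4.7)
The inclusion $\supseteq$ is immediate from $(g_i, g_{i+1})^\ell \subseteq I^\ell$. For $\subseteq$, since both sides are monomial ideals, it suffices to show that each minimal generator $m$ of $I^{D+\ell}$ is contained in some summand $(g_i, g_{i+1})^\ell I^D$. Applying Corollary~\ref{corollary:reduction-number}, I would write
\begin{equation*}
m \;=\; \prod_{j=1}^{k+1} g_j^{\ell_j} \cdot \prod_{f\in N(I)} f^{k_f}
\end{equation*}
with $k_f \le \delta_I$ for every $f$ and $\sum_j \ell_j + \sum_f k_f = D+\ell$. The bound on the $k_f$ forces $\sum_f k_f \le |N(I)|\delta_I$, whence $S \coloneqq \sum_j \ell_j \ge |P(I)|d_I + \ell = (k+1)d_I + \ell$. (In particular, when $|P(I)|=2$, so $d_I=0$ and $k=1$, one already has $\ell_1+\ell_2 = S \ge \ell$.)

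The sufficient criterion I target is the existence of an index $j$ and a monomial identity witnessing both $g_j^\ell \mid m$ and $m/g_j^\ell \in I^D$, since this yields $m \in (g_j)^\ell I^D \subseteq (g_{j-1}, g_j)^\ell I^D$. In the easy case, where some consecutive pair already satisfies $\ell_i + \ell_{i+1} \ge \ell$, I pick $a$ with $a \le \ell_i$ and $\ell - a \le \ell_{i+1}$, strip the corresponding $g_i^a g_{i+1}^{\ell-a}$ from the product, and observe that what remains is a product of exactly $D$ generators of $I$, hence in $I^D$. In the hard case ($|P(I)|\ge 3$ with every consecutive sum strictly below $\ell$), I would invoke Proposition~\ref{proposition:non-minimal-powers}(2): persistence of an interior $g_j$ gives $g_j \notin \ic{(g_{j-1}, g_{j+1})}$, yielding a monomial identity $g_{j-1}^{\alpha_j} g_{j+1}^{n_j - \alpha_j} = g_j^{n_j}\cdot r_j$ for some monomial $r_j$, with $n_j \le \min\{\maxset{x}{I}, \maxset{y}{I}\} = d_I + 2$. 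Substituting this identity $t$ times into the factorisation of $m$---admissible as long as $t \le \min\{\lfloor\ell_{j-1}/\alpha_j\rfloor, \lfloor\ell_{j+1}/(n_j - \alpha_j)\rfloor\}$---produces a monomial expression in which $g_j^{\ell_j + t n_j}$ divides $m$, and the quotient $m/g_j^\ell$ then decomposes as a product of exactly $D$ generators of $I$ multiplied by $r_j^t$, placing it in $I^D$ as desired.

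The main obstacle is the quantitative step: verifying that under $S \ge (k+1)d_I + \ell$ and all consecutive sums strictly below $\ell$, one can always locate an interior $j$ and an admissible $t$ with $\ell_j + t n_j \ge \ell$. I would attack this via pigeonhole or averaging on the $k-1$ interior triples $(\ell_{j-1}, \ell_j, \ell_{j+1})$, combined with the uniform bound $n_j \le d_I + 2$: the failure of every pair sum to reach $\ell$ already forces $\ell_1, \ell_{k+1} < \ell$ and concentrates the total mass on the interior positions, while the $|P(I)|d_I$ slack built into $D$ is calibrated precisely so that the required $t$ fits within the $\ell_{j-1}, \ell_{j+1}$ budgets.
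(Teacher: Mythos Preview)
Your setup mirrors the paper's: reduce via Corollary~\ref{corollary:reduction-number} to the persistent part, then redistribute exponents using Proposition~\ref{proposition:non-minimal-powers}(2). The gap is exactly where you flag it, and it is real. Your hard-case strategy---fix one interior $j$ and substitute $t$ times using the \emph{adjacent} triple $(g_{j-1},g_j,g_{j+1})$ until $g_j^\ell\mid m$---fails in general. Take $k+1=4$ persistent generators and the exponent tuple $\ell_1=\ell_4=\ell-1$, $\ell_2=\ell_3=0$; this satisfies $\sum_j\ell_j\ge(k+1)d_I+\ell$ once $\ell\ge 4d_I+2$, and every consecutive sum is $<\ell$. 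For $j=2$ the constraint $t(n_2-\alpha_2)\le\ell_3=0$ forces $t=0$; for $j=3$ symmetrically $t=0$. No pigeonhole or averaging rescues this, because the mass can sit entirely at the two endpoints with the interior empty, and nothing in your argument rules out such a representation arising from Corollary~\ref{corollary:reduction-number}.

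The paper avoids the quantitative obstacle by iterating rather than shooting once, and by applying Proposition~\ref{proposition:non-minimal-powers}(2) to the \emph{extreme} pair rather than an adjacent triple. Setting $a=\min\{i:\ell_i>d_I\}$ and $b=\max\{i:\ell_i>d_I\}$, it uses that $g_{a+1}$ lies between $g_a$ and $g_b$ and is persistent, hence $g_{a+1}^{n}\mid g_a^{\alpha}g_b^{n-\alpha}$ with $n=\dist_{\xy}(g_a,g_b)$. Since $\ell_a,\ell_b>d_I$ while $\alpha$ and $n-\alpha$ are each at most $d_I+1$, one substitution is always admissible; minimality of $m$ forces this divisibility to be an \emph{equality}, so the substitution rewrites the product of persistent generators exactly, with no residual $r$. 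Iterating drives $\ell_a$ or $\ell_b$ down to at most $d_I$, strictly shrinking $[a,b]$; eventually at most two adjacent indices exceed $d_I$, and then their sum is automatically at least $2d_I+\ell\ge\ell$. Your single-$j$ adjacent-triple scheme cannot reach mass at both endpoints simultaneously, and without the minimality-forced equalities you also lack a clean mechanism to iterate.
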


\begin{proof}
  The inclusion ``$\supseteq$'' is trivial.  We prove
  ``$\subseteq$''. Write $N\coloneqq \mingens{I}\setminus P$.  By
  Corollary~\ref{corollary:reduction-number} we can write
  $I^{D+\ell}= I^{\delta +n} = \mathfrak{a}^{n}I^{\delta}$ where
  $\delta\coloneqq|N|\cdot \delta_P$, and
  $n\coloneqq D-\delta+\ell \ge(k+1)d_P+\ell$, and $\mathfrak{a}$ is
  the ideal generated by $P$.  Thus, every minimal generator $F$ of
  $I^{\delta +n}$ is of the form $F=g\cdot f$, where $g$ is a product
  of $n$ elements in $P$ and $f\in I^{\delta}$.

  \textbf{Claim.} There exists $1\le i\le k$ such that
  \begin{equation*}
    g\in(g_i,g_{i+1})^{n-(k+1)d_P}\mathfrak{a}^{(k+1)d_P}.
  \end{equation*}
  We write $g=g_1^{n_1}\cdots g_{k+1}^{n_{k}}$, where $n_i\in\N_0$
  with $\sum_i n_i=n$.  The assertion of the claim holds trivially in
  the following cases:
  \begin{enumerate}
  \item at most one $n_i> d_P$,
  \item $n_j, n_{j+1}>d_P$ for some $1\le j\le k$ and $n_i\le d_P$ for
    all $i\notin\{ j, j+1\}$.
  \end{enumerate}
  Otherwise, we take $a\coloneqq\min\{i\mid n_i> d_P\}$ and
  $b\coloneqq\max\{i\mid n_i>d_P\}$. Note that $a+1<b$.  The following
  argument may be repeated as needed; in every step either $a$
  increases strictly or $b$ decreases strictly. For readability, we
  may therefore assume without loss of generality that $a = 1$ and
  $b = k+1$. By Proposition~\ref{proposition:non-minimal-powers}(2),
  we then have
  \begin{equation*}
    g_2^{\dist_{\xy}(g_1,g_{k+1})} \mid g_1^{\dist_{\xy}(g_2,g_{k+1})}g_{k+1}^{\dist_{\xy}(g_1,g_2)}.
  \end{equation*}
  Since $g$ is a minimal generator, equality must hold, so
  \begin{equation*}
    g = g_1^{n_1-\dist_{\xy}(g_2,g_{k+1})} g_2^{n_2+\dist_{\xy}(g_1,g_{k+1})} g_3^{n_3}\cdots g_k^{n_k}g_{k+1}^{n_{k+1}-\dist_{\xy}(g_1,g_2)}.
  \end{equation*}
  We iteratively apply
  Proposition~\ref{proposition:non-minimal-powers}(2) until at least
  one of the exponents of $g_1$ or $g_2$ becomes $\le d_P$.  At this
  stage, we redefine $a'\coloneqq\min\{i\mid n_i'> d_P\}$ and
  $b'\coloneqq\max\{i\mid n_i'>d_P\}$ where $n_i'$ is the new exponent
  of $g_i$.  Now $a'>a$ or $b'<b$ must hold.  Hence, by repeating this
  argument from the top we eventually must reach one of the trivial
  cases (1) or (2), where the claim follows immediately.
\end{proof}

\begin{example}\label{example:I^D+l}
  Let $I = (g_1,g_2,g_3)$ with $g_1=y^5$, $g_2=xy^4$, and $g_3=x^6$
  from Example~\ref{example:bound-ic}.  Then
  $P(I) = P^{\ast}(I)=\{g_1,g_2,g_3\}$.  Since $g_2^5\mid g_1^4g_3$,
  it follows that, for instance, the monomial
  $g_1^{11}g_2^{102}g_3^{42}$ is an element of
  $(g_1^3 \cdot g_2^{102+10} \cdot g_3^{40})\subseteq
  (g_2,g_3)^{152}I^3$.
\end{example}

\begin{remark}
  In Example~\ref{example:I^D+l}, a direct computation shows that
  $D=3$ is large enough, although Theorem~\ref{theorem:I^D+l} gives a
  lower bound of $9$.  In general, a tight bound for the minimal
  value for $D$ depends on all the exponents in $\mingens{I}$.
\end{remark}

\begin{remark}\label{remark:use-weakly-persistent-instead}
  The set $P$ may be chosen closer to either $P(I)$ or $P^{\ast}(I)$,
  depending on the specific context in which
  Theorem~\ref{theorem:I^D+l} is applied.  For instance, if the
  objective is to minimize $(\mu(I)-|P|)\cdot\delta_P + |P|\cdot d_P$,
  then $P$ can be selected based on the values of $d_P$
  and~$\delta_P$.
\end{remark}

\section{Ideals with regular staircase factors}
\label{section:staircase-factors}

\begin{wrapfigure}{r}{0.3\textwidth}
   \centering
     \begin{tikzpicture}
    \pgfmathsetmacro{\scalefactor}{0.2}
    \pgfmathsetmacro{\xa}{1}
    \pgfmathsetmacro{\yb}{2}
    \pgfmathsetmacro{\xshift}{0}
    \pgfmathsetmacro{\yshift}{0}
    \pgfmathsetmacro{\power}{3}
    \def\Jstring{0/5,4/0}

    \computeParameters[1]
    \drawAxes
    \drawJshape{0}
    \drawStairs{1}
  \end{tikzpicture}

   \caption{}
   \label{figure:staircase-ideal}
\end{wrapfigure}
In this section we study the minimal generators of sums of ideals of
the form $(g,h)^nJ$, in light of Theorem~\ref{theorem:I^D+l}. Here,
$g$ and $h$ are monomials, and $J$ is a (fixed) anchored monomial
ideal in $\K[x,y]$. The main result of this section is
Theorem~\ref{theorem:gluing-more-segments}.

We begin with the special case $(x^u,y^v)^nJ$ for $u$, $v\in \N$.  By
drawing its exponents in the $xy$-plane, the ideal $(x^u,y^v)^n$ looks
like a ``regular staircase'' in the sense that all $n$ steps in its
staircase are of the same size. The minimal generators of the product
$(x^u,y^u)^nJ$ are of the form $x^{u(n-i)}y^{vi}f$ with
$f\in \mingens{J}$. In general, not all elements of this form are
minimal generators, as divisibility relations may occur among them.
In Figure~\ref{figure:staircase-ideal} we visualize the potential
cancellations among the minimal generators in the product
$(x^u,y^v)^nJ$.  Although Figure~\ref{figure:staircase-ideal} does
not show the actual generators of the ideal~$J$, it illustrates how
the overlaps of the shifted copies of~$J$ form a
repeating pattern as $n$ increases.  We formalize this ``pattern
repetition'' in Theorem~\ref{theorem:generators-one-segment} below.
Before that, we establish in Lemma~\ref{lemma:y-sections} that
partitioning the elements of $(x^u,y^v)^nJ$ based on their $y$-degrees
reveals divisibilities by certain powers of $x^u$ or
$y^v$. Figure~\ref{figure:y-sections-x-cuts} visualizes this
partition.

\begin{remark}\label{remark:why-y-sections}
  The choice of partitioning by $y$-degree is arbitrary; all results
  of this section remain valid if we instead partition by $x$-degree,
  simply by interchanging the roles of $x$ and $y$.
\end{remark}

\begin{lemma}\label{lemma:y-sections}
  Let $u$, $v\in \N$ and $J\subseteq\K[x,y]$ be an anchored monomial
  ideal. Moreover, let
  $r \ge\left\lceil \frac{\maxset{y}{J}}{v}\right\rceil$ and $n\ge
  r$. For $r \le j \le n$, we set
  \begin{align*}
    \mathcal{U}_{j} &= \left\{ F\in \twoideal{x^u, y^v}^{n}J \longmid \deg_yF \ge jv\right\}\! \text{, and }\\
    \mathcal{L}_{j} &= \left\{ F\in \twoideal{x^u, y^v}^{n}J \longmid  \deg_yF < jv\right\}\!.
  \end{align*}
  Then, for $r \le j \le n$,
  \begin{equation*}
    \mathcal{U}_{j} \subseteq y^{v(j-r)}\cdot \twoideal{x^u, y^v}^{n-(j-r)}J
    \text{ and }
    \mathcal{L}_{j} \subseteq x^{u(n-j)}\cdot \twoideal{x^u, y^v}^{j}J.
  \end{equation*}
  In particular, for $r\le j \le n-1$,
  \begin{equation*}
    \mathcal{U}_{j} \cap \mathcal{L}_{j+1}
    = x^{u(n-(j+1))}y^{v(j-r)}\cdot\left\{ f\in \twoideal{x^u, y^v}^{r+1}J \longmid rv \le \deg_yf < (r+1)v \right\}\!.
  \end{equation*}
\end{lemma}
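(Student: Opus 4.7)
The plan is to reduce the argument to monomials and then to track exponents carefully. Since $\twoideal{x^u, y^v}^{n}J$ is a monomial ideal, it suffices to prove the three containments for monomial elements $F$, which can always be written as $F = x^{u\alpha}y^{v\beta}\cdot f\cdot h$ with $\alpha + \beta = n$, $f \in \mingens{J}$, and $h$ a monomial in $\K[x,y]$. The key quantitative input is that $J$ being anchored together with $r \ge \lceil \maxset{y}{J}/v\rceil$ implies $\deg_y f \le vr$ for every $f \in \mingens{J}$, since the minimal $y$-degree among minimal generators of an anchored ideal is $0$.

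For the inclusion $\mathcal{U}_{j} \subseteq y^{v(j-r)}\twoideal{x^u, y^v}^{n-(j-r)}J$, I would distinguish the cases $\beta \ge j-r$ and $\beta < j-r$. The first case is direct: pulling $y^{v(j-r)}$ out of $y^{v\beta}$ leaves a monomial whose $\twoideal{x^u,y^v}$-exponents sum to $n-(j-r)$. The second case carries the main content: the hypothesis $\deg_y F \ge jv$ together with $\deg_y f \le vr$ forces $\deg_y h \ge v(j-\beta-r)$, so the missing $y$-power can be extracted from $h$. After factoring out $y^{v(j-r)}$, the remaining $x^{u\alpha}$ has exponent $\alpha = n-\beta > n-(j-r)$, which on its own supplies enough $\twoideal{x^u, y^v}$-power.

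For $\mathcal{L}_{j}\subseteq x^{u(n-j)}\twoideal{x^u, y^v}^{j}J$, the condition $\deg_y F < jv$ forces $v\beta < jv$, hence $\alpha \ge n-j+1$, and $x^{u(n-j)}$ can be pulled out of $x^{u\alpha}$ directly. The intersection statement then follows by combining both inclusions applied to $\mathcal{U}_j$ and $\mathcal{L}_{j+1}$: since $x^{u(n-j-1)}$ and $y^{v(j-r)}$ are coprime, any $F$ in the intersection is divisible by their product, and the same case analysis as above applied to the quotient $G = F/(x^{u(n-j-1)}y^{v(j-r)})$ shows $G \in \twoideal{x^u, y^v}^{r+1}J$, with the bounds on $\deg_y G$ obtained by direct subtraction. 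The reverse containment requires only verifying the exponent sum and the $y$-degree band.

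The main obstacle is the second case of the first inclusion, where anchoredness has to be used precisely: without the bound $\deg_y f \le \maxset{y}{J}$, the shortfall in $y$-power could in principle be contributed by $f$ rather than by $h$, which would break the clean extraction of $y^{v(j-r)}$ and spoil the exponent count on the remaining $x$-factor.
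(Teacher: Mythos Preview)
Your proof is correct and follows essentially the same route as the paper: write a monomial $F\in(x^u,y^v)^nJ$ as a generator of $(x^u,y^v)^n$ times an element of $J$ and track the $y$-exponents using the bound $\deg_y f\le\maxset{y}{J}\le rv$ for $f\in\mingens{J}$. The paper compresses your two cases for $\mathcal{U}_j$ into a single step by asserting $\deg_y\tilde f\le\maxset{y}{J}$ for its chosen decomposition, whereas your explicit Case~2 handles exactly the situation where extra $y$-power sits in the residual factor $h$; both arrive at the same inclusion, and the intersection and $\mathcal{L}_j$ arguments match.
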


\begin{figure}[h]
  \centering
  \begin{minipage}[t]{0.64\textwidth}
    \caption{We partition $(x^u,y^v)^nJ$ into sections based on the
      $y$-degree (indicated by dashed lines). Additionally, note that
      the upper left corner of each rectangle is in $(x^u,y^v)^nJ$
      (since we assumed $J$ to be anchored). This bounds the
      $x$-degree in each $y$-section; see
      Remark~\ref{remark:y-sections-and-x-degree}.}
    \label{figure:y-sections-x-cuts}
  \end{minipage}\hspace*{-1em}
  \begin{minipage}[t]{0.35\textwidth}
    \centering
    \begin{tikzpicture}[baseline=(current bounding box.north)]
    \pgfmathsetmacro{\scalefactor}{0.2}
    \pgfmathsetmacro{\xa}{1}
    \pgfmathsetmacro{\yb}{2}
    \pgfmathsetmacro{\xshift}{0}
    \pgfmathsetmacro{\yshift}{0}
    \pgfmathsetmacro{\power}{6}
    \def\Jstring{0/5,4/0}

    \computeParameters[1]
    \drawAxes
    \drawJshape{1}
    \drawStairs{1}
    \drawYSections
  \end{tikzpicture}

  \end{minipage}\hfill
\end{figure}

\begin{proof}
  For the first two inclusions we write $F=x^{u(n-i)}y^{vi}\tilde f$
  with $0\le i\le n$ and $\tilde f\in \mingens{J}$ and separate into
  two cases.

  $\mathcal{U}_j$: Note that $\deg_y\tilde f \le
  \maxset{y}{J}$. Hence, $\deg_yF \ge jv$ implies
  $i\ge j - \frac{\maxset{y}{J}}{v} \ge j - r\ge 0$, that is,
  $F = y^{v(j-r)}\cdot f$ with
  $f =x^{u(n-i)}y^{v(i-(j-r))}\tilde f\in
  \twoideal{x^u,y^v}^{n-(j-r)}J$.

  $\mathcal{L}_j$: The condition $\deg_yF < jv$ implies $i\le j$ and
  $n-i \ge n-j \ge 0$. Therefore, $F = x^{u(n-j)}\cdot f$ with
  $f =x^{u(j-i)}y^{vi}\tilde f\in (x^u, y^v)^{j}J$.

  Finally, for the last equality, ``$\subseteq$'' follows from the
  above while ``$\supseteq$'' is obvious.
\end{proof}

\begin{remark}\label{remark:y-sections-and-x-degree}
  As preparation for later arguments, we provide a bound on the
  $x$-degrees of elements in $\mathcal{U}_j$: Let $J$ be an anchored
  monomial ideal with $b\coloneqq \maxset{y}{J}$. With the notation of
  Lemma~\ref{lemma:y-sections}, if $n>j-r$ and $j\ge r$, then
  $H\coloneqq x^{u(n-j+r)}y^{v(j-r)}y^b$ is an element of
  $(x^u,y^v)^{n}J$ satisfying
  \begin{equation}\label{eq:H}
    \deg_yH = v(j-r)+b \le jv\quad \text{ and }\quad\deg_xH = (n-j+r)u,
  \end{equation}
  cf.~Figure~\ref{figure:y-sections-x-cuts}.  Consequently, for
  $n>j-r$,
  \begin{equation*}
    f\in \mathcal{U}_{j}\cap \mingens[l]{(x^u,y^v)^{n}J} \quad\Longrightarrow\quad \deg_xf \le (n-j+r)u,
  \end{equation*}
  and equality can only hold if $H=f$. In particular, if there exists
  $f\in \mathcal{U}_{j}\cap \mingens{(x^u,y^v)^{n}J}$ with
  $\deg_xf = (n+r-j)u$, then $H=f\in \mathcal{U}_{j}$ which,
  considering the $y$-degree of $H$, further implies $r= \frac{b}{v}$
  and $\deg_y(f) = \dist_y(J)$.
\end{remark}

We are now set to formalize the pattern repetition mentioned at the
beginning of this section. Based on the $y$-degrees, we define three
subideals $L$, $M$, and $R$ of $(x^u,y^v)^{r+1}J$. We use these to
assemble the minimal generators of $(x^u,y^v)^{r+1+\ell}J$ for all
$\ell\ge0$ by suitable shifts.
\begin{theorem}\label{theorem:generators-one-segment}
  Let $u$, $v\in\N$ and $J\subseteq\K[x,y]$ be an anchored monomial
  ideal.  Then, for all
  $r \ge \left\lceil \frac{\maxset{y}{J}}{v}\right\rceil$ and
  $\ell\in \N_0$,
  \begin{equation}\label{eq:disjoint-union-one-segment}
    \mingens[l]{(x^u,y^v)^{r+1+\ell}J} =
    y^{v\ell}\leftpart  \uplus
    \biguplus_{j=1}^{\ell}x^{uj}y^{v(\ell-j)}\middlepart
    \uplus x^{u\ell}\rightpart ,
  \end{equation}
  where
  \begin{align*}
    \leftpart   &=\left\{f \in \mingens[l]{(x^u,y^v)^{r+1}J}\longmid  \deg_yf \ge rv \right\}\!, \\
    \middlepart &=\left\{f \in \mingens[l]{(x^u,y^v)^{r+1}J}\longmid rv \le \deg_yf < (r+1)v \right\}\!, \text{ and} \\
    \rightpart  &=\left\{f \in \mingens[l]{(x^u,y^v)^{r+1}J}\longmid \deg_yf < rv \right\}\!. \\
  \end{align*}
  In particular,
  \begin{equation*}
    \mu\!\left((x^u,y^v)^{r+1+\ell}J\right)
    = \mu\!\left((x^u, y^v)^{r+1}J\right) + \ell\cdot|\middlepart |.
  \end{equation*}
\end{theorem}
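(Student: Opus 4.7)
I would prove this by partitioning the minimal generators of $(x^u,y^v)^{r+1+\ell}J$ by $y$-degree and matching each band to the corresponding term on the right. The main tool is Lemma~\ref{lemma:y-sections} applied with $n=r+1+\ell$: it rewrites elements of $y$-degree $\ge v(r+\ell)$ as $y^{v\ell}$ times something in $(x^u,y^v)^{r+1}J$; identifies elements of $y$-degree in $[v(r+\ell-j),v(r+\ell-j+1))$ as exactly $x^{uj}y^{v(\ell-j)}$ times the middle band of $(x^u,y^v)^{r+1}J$; and places elements of $y$-degree $<rv$ into $x^{u(\ell+1)}(x^u,y^v)^rJ$. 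These three $y$-degree ranges are pairwise disjoint, which already gives disjointness of the union on the right-hand side.

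For the containment $\subseteq$ I take a minimal generator $G$ of $(x^u,y^v)^{r+1+\ell}J$ and split on $\deg_yG$. In each case the lemma rewrites $G$ as a known shift $\sigma$ times some $g\in(x^u,y^v)^{r+1}J$; minimality of $G$ forces $g$ to be a minimal generator of $(x^u,y^v)^{r+1}J$ (a strict divisor of $g$, multiplied by $\sigma$, would strictly divide $G$), and the $y$-degree range places $g$ in $\leftpart$, $\middlepart$, or $\rightpart$. The bottom case requires the small rewriting $x^{u(\ell+1)}h = x^{u\ell}(x^uh)$ with $x^uh\in(x^u,y^v)^{r+1}J$.

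The reverse containment $\supseteq$ is more delicate. Given $f\in\leftpart$ (or $\middlepart$, $\rightpart$) I need to show the shifted element is truly a minimal generator of the big ideal. Arguing by contradiction, let $h\in(x^u,y^v)^{r+1+\ell}J$ be a minimal generator strictly dividing it, and split on $\deg_yh$. If $\deg_yh$ lies in the same band as the target, Lemma~\ref{lemma:y-sections} writes $h=\sigma\cdot h'$ with $h'\in(x^u,y^v)^{r+1}J$ minimal and strictly dividing $f$---contradicting minimality. The main obstacle is the case $\deg_yh$ below the target band: the lemma forces $h=x^uh''$ with $h''\in(x^u,y^v)^{r+\ell}J\subseteq(x^u,y^v)^{r+1}J$ (for $\ell\ge 1$; the case $\ell=0$ is trivially $\leftpart\uplus\rightpart=\mingens{(x^u,y^v)^{r+1}J}$), and $h''$ inherits minimality from $h$. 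One then compares the $y$-degrees of $h''$ and $f$: if $\deg_yh''\le\deg_yf$ then $h''\mid f$ strictly in $(x^u,y^v)^{r+1}J$, a contradiction; if $\deg_yh''>\deg_yf$ then $f\in\middlepart$ and both $h''$ and $f$ lie in the middle band, at which point the ordering of middle-band minimal generators---together with the hypothesis $r\ge\lceil\maxset{y}{J}/v\rceil$, which keeps all the required shifts within the ideal---rules out $h$ dividing $y^{v\ell}f$ at all. The count formula is then immediate from cardinalities: since $\leftpart\uplus\rightpart=\mingens{(x^u,y^v)^{r+1}J}$ and $\middlepart\subseteq\leftpart$, the disjoint right-hand side has size $|\leftpart|+\ell|\middlepart|+|\rightpart|=\mu((x^u,y^v)^{r+1}J)+\ell|\middlepart|$.
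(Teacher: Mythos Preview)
Your overall architecture matches the paper's: partition by $y$-degree via Lemma~\ref{lemma:y-sections}, then show each band of $\mingens[l]{(x^u,y^v)^{r+1+\ell}J}$ equals the corresponding shifted piece of $\leftpart$, $\middlepart$, or $\rightpart$. Your treatment of ``$\subseteq$'' is fine and essentially what the paper does.

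The gap is in ``$\supseteq$'', specifically in your handling of the case where a hypothetical strict divisor $h$ of $y^{v\ell}f$ (with $f\in\leftpart$) has $\deg_yh$ \emph{below} the target band. You write $h=x^uh''$ with $h''\in(x^u,y^v)^{r+\ell}J$ and then split on whether $\deg_yh''\le\deg_yf$ or $\deg_yh''>\deg_yf$. The first sub-case is fine. But in the second you assert ``then $f\in\middlepart$'', and this implication is simply not true: $f\in\leftpart$ with $\deg_yf\ge(r+1)v$ is perfectly possible, and nothing you have written rules out $\deg_yh>\deg_yf$ for such $f$. Your appeal to ``the ordering of middle-band minimal generators'' is not an argument. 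Your treatment of $f\in\middlepart$ is likewise only a sketch.

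What you are missing is the $x$-degree bound of Remark~\ref{remark:y-sections-and-x-degree}. The paper's argument for $f\in\leftpart$ does not case-split on $\deg_yh$ at all: since $\deg_yf\ge rv$, Remark~\ref{remark:y-sections-and-x-degree} gives $\deg_xf\le(r+1)u$, so any divisor $H=x^{u(r+1+\ell-i)}y^{vi}\tilde h$ of $y^{v\ell}f$ satisfies $u(r+1+\ell-i)\le(r+1)u$, forcing $i\ge\ell$. Hence $y^{v\ell}\mid H$, and cancelling $y^{v\ell}$ reduces immediately to the minimality of $f$ in $(x^u,y^v)^{r+1}J$. For $f\in\middlepart$ the paper then reuses this $\leftpart$-case (with $\ell$ replaced by $j-r$) to see that $y^{v(j-r)}f$ is minimal in $(x^u,y^v)^{j+1}J$, and a symmetric $y$-degree bound lets one cancel the $x$-power. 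Once you invoke Remark~\ref{remark:y-sections-and-x-degree}, your contradiction argument collapses to the paper's direct one and the unresolved sub-case disappears.
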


\begin{proof}
  Note that the count is an immediate consequence of the first
  assertion.

  With the notation of Lemma~\ref{lemma:y-sections}, we have
  \begin{equation}\label{eq:union-of-y-cuts}
    \twoideal{x^u, y^v}^{r+1+\ell}J
    = \mathcal{L}_r
    \uplus \biguplus_{j=r}^{r+\ell-1} (\mathcal{U}_j \cap \mathcal{L}_{j+1})
    \uplus \mathcal{U}_{r+\ell}.
  \end{equation}
  We claim that the following three statements hold:
  \begin{enumerate}
  \item\label{pf:1}
    $\mathcal{U}_{r+\ell}\cap\mingens[l]{\twoideal{x^u, y^v}^{r+1+\ell}J}=
    y^{v\ell}\cdot\leftpart$,
  \item\label{pf:2}
    $\mathcal{L}_r\cap \mingens[l]{\twoideal{x^u, y^v}^{r+1+\ell}J}=
    x^{u\ell}\cdot\rightpart$, and
  \item\label{pf:3} for all $r\le j \le r+\ell-1$,
    \begin{equation*}
      (\mathcal{U}_{j}\cap \mathcal{L}_{j+1})\cap\mingens[l]{\twoideal{x^u, y^v}^{r+1+\ell}J}
      =x^{u(r + \ell-j)}y^{v(j-r)}\cdot\middlepart .
    \end{equation*}
  \end{enumerate}
  If the claim holds, then the assertion follows
  from~\eqref{eq:union-of-y-cuts} since, arranging the sets in reverse
  order,
  \begin{equation*}
    \mingens[l]{(x^u,y^v)^{r+1+\ell}J} \cap \biguplus_{j=r}^{r+\ell-1} (\mathcal{U}_j \cap \mathcal{L}_{j+1})
    =  \biguplus_{j=1}^{\ell} x^{uj}y^{v(\ell-j)}\cdot\middlepart.
  \end{equation*}

  In all three cases, the inequalities that the $y$-degrees must
  satisfy are the same on both sides of the equality.  Moreover, the
  inclusions ``$\subseteq$'' all hold due to
  Lemma~\ref{lemma:y-sections} and the fact that a generator
  $g\cdot f$ of $(x^u,y^v)^{r+1+\ell} J$ with
  $g\in (x^u,y^v)^{r+1+\ell}$ and $f\in J$ can only be minimal,
  provided that $f$ is a minimal generator of $J$ (the same holds for
  $g$ but is not relevant here).

  For the reverse inclusions, it is in all three cases left to show
  that every element of the set on the right is a minimal generator of
  $\twoideal{x^u, y^v}^{r+1+\ell}J$.  To do so, take
  \begin{equation*}
    H=x^{u(r+1+\ell-i)} y^{vi} h\in (x^u,y^v)^{r+1+\ell}J
  \end{equation*}
  with $h\in J$ and $0\le i \le \ell+r+1$.

  (\ref{pf:1}) Let $F = y^{v\ell}f$ with
  $f\in \mingens[l]{(x^u, y^v)^{r+1}J}$ and $\deg_yf\ge rv$.  Then
  $\deg_xF = \deg_xf \le (r+1)u$,
  cf.~Remark~\ref{remark:y-sections-and-x-degree}.  If $H$ divides
  $F$, then $\deg_xH\le\deg_xF$ implies $(r+1+\ell -i)u \le (r+1)u$,
  that is, $i\ge \ell$. We cancel out $y^{v\ell}$ to conclude that
  \begin{equation*}
    x^{u(r+1+\ell-i)} y^{v(i-\ell)} h\mid f,
  \end{equation*}
  which, since $f\in \mingens[l]{(x^u, y^v)^{r+1}J}$, implies that $H=F$.

  (\ref{pf:2}) Let $F=x^{u\ell}f$ with
  $f\in \mingens[l]{(x^u, y^v)^{r+1}J}$ and $\deg_yf< rv$.  Again,
  assume that $H$ divides $F$.  Then $\deg_yH=iv+\deg_yh<rv$ and hence
  $i<r$.  This implies that $r+\ell+1-i>\ell+1$, thus we can cancel
  out $x^{u\ell}$ on both sides and end up with the same conclusion as
  in (\ref{pf:1}).

  (\ref{pf:3}) Let $r\le j \le r+\ell-1$ and
  $F=x^{u(r + \ell-j)}y^{v(j-r)}f$ with $f\in M$. Hence,
  \begin{equation*}
        \deg_yF< v(j-r)+v(r+1) = v(j+1).
  \end{equation*}
  As above, if $H\mid F$, then the $y$-degree of
  $H$ must be less or equal than the $y$-degree of $F$, and therefore
  $i\le j+1$. This implies $r+\ell+1-i\ge r+\ell-j$, so we can cancel
  out $x^{u(r+\ell-j)}$ which leaves us with
  \begin{equation*}
    x^{u(j-i+1)}y^{vi}h\mid y^{v(j-r)}f.
  \end{equation*}
  Since $f\in M\subseteq L$, it follows from~(\ref{pf:1}) that
  $y^{v(j-r)}f$ is a minimal generator of $(x^u,y^v)^{j+1}J$.  We
  conclude that $H=F$.
\end{proof}

Let us unravel Theorem~\ref{theorem:generators-one-segment} in an
example.
\begin{example}\label{example:1}
  Let $u=3$, $v=4$, and
  \begin{equation*}
    J = (y^{10}, x^2y^7, x^3y^5, x^5y^4, x^7y^2,x^9)
  \end{equation*}
  as depicted in the (gray) rectangle in
  Figure~\ref{figure:example-thm-2}.  We choose $r=3$, which is the
  minimal possible choice. The lower two dashed lines in the figure
  mark the areas where the $y$-degree is in between $rv$ and $(r+1)v$.

  The left part of Figure~\ref{figure:example-thm-2} shows
  $(x^3,y^4)^{r+1}J$. The set $\leftpart$ consists of generators above
  the line $j=r$, marked with (blue) circles. The set $\rightpart$
  consists of the (orange) squares below $j=r$. The middle set
  $\middlepart$ contains the two encircled (in red) generators between
  the two lines $j=r$ and $j=r+1$.

  On the right side of Figure~\ref{figure:example-thm-2}, we see
  $(x^3,y^4)^{r+3}J$. The minimal generators are a disjoint union of
  the sets
  \begin{equation*}
    y^{4\cdot 2}\leftpart,\quad x^3y^4\middlepart,\quad
    x^{3\cdot 2}\middlepart,\quad \text{and}\quad x^{3\cdot 2}\rightpart.
  \end{equation*}
  \begin{figure}[h]
    \centering
    \begin{subfigure}[b]{0.49\textwidth}
      \begin{tikzpicture}
  \pgfmathsetmacro{\scalefactor}{0.17}
  \pgfmathsetmacro{\xa}{3}
  \pgfmathsetmacro{\yb}{4}
  \pgfmathsetmacro{\xshift}{0}
  \pgfmathsetmacro{\yshift}{0}
  \pgfmathsetmacro{\power}{4}
  \pgfmathsetmacro{\positionx}{-2.5}
  \pgfmathsetmacro{\positiony}{-1.5}
  \pgfmathsetmacro{\offset}{0.2}
  \renewcommand*{\Jstring}{0/10,2/7,3/5,5/4,7/2,9/0}%

  \computeParameters[1]
  \drawAxes
  \drawFullJ[1]{0.3}
  \drawStairs{1}

  \coordinate (g) at ($\scalefactor*(3,5)$);
  \coordinate (h1)at ($\scalefactor*(0,10)$);
  \coordinate (h2)at ($\scalefactor*(2,7)$);
  \coordinate (h3)at ($\scalefactor*(9,0)$);
  \coordinate (h4)at ($\scalefactor*(7,2)$);
  \coordinate (h5)at ($\scalefactor*(5,4)$);

  \draw[black]
  let
  \p1 = ($\power*(g1)$),
  \p2 = ($\power*(g1) - (g1) +(g2)$),
  \p3 = ($(\p1)+ (h1)$),
  \p4 = ($(\p1)+ (h2)$),
  \p5 = ($(\p1)+ (g) $),
  \p6 = ($(\p2)+ (h2)$),
  \p7 = ($(\p2)+ (g) $),
  in
  (yend) -- (\p3) -- (\x4,\y3) -- (\p4) -- (\x5,\y4) -- (\p5) --(\x6,\y5) --(\p6) --(\x7,\y6)--(\p7);

  \draw[black]
  let
  \p1 = ($\power*(g1) -   (g1) +  (g2)$),
  \p2 = ($\power*(g1) - 2*(g1) +2*(g2)$),
  \p3 = ($(\p1)+ (h1)$),
  \p4 = ($(\p1)+ (h2)$),
  \p5 = ($(\p1)+ (g) $),
  \p6 = ($(\p2)+ (h2)$),
  \p7 = ($(\p2)+ (g) $),
  in
  (\p4) -- (\x5,\y4) -- (\p5) --(\x6,\y5) --(\p6) --(\x7,\y6)--(\p7);

  \draw[black]
  let
  \p1 = ($\power*(g2)$),
  \p2 = ($\power*(g2) - (g2) +(g1)$),
  \p3 = ($(\p1)+ (h3)$),
  \p4 = ($(\p1)+ (h4)$),
  \p5 = ($(\p1)+ (h5) $),
  \p6 = ($(\p1)+ (g)$),
  \p7 = ($(\p1)+ (h2) $),
  \p8 = ($(\p2)+ (g) $),
  in
 (xend) -- (\p3) -- (\x3,\y4) -- (\p4) -- (\x4,\y5) -- (\p5) --(\x5,\y6) --(\p6) --(\x6,\y7)--(\p7) --(\x7,\y8)--(\p8);

  \draw[black]
  let
  \p1 = ($\power*(g2) - 2*(g2)  +2*(g1)$),
  \p2 = ($\power*(g2) - (g2) +(g1)$),
  \p3 = ($(\p2)+ (g) $),
  \p4 = ($(\p2)+ (h2) $),
  \p5 = ($(\p1)+ (g)$),
  in
  (\p3) -- (\x3,\y4) -- (\p4) -- (\x4,\y5) -- (\p5);

  \node[fill=ProcessBlue, draw, inner sep=1pt, circle] at ($\power*(g1)              +(h1)$) {};
  \node[fill=ProcessBlue, draw, inner sep=1pt, circle] at ($\power*(g1)              +(h2)$) {};
  \node[fill=ProcessBlue, draw, inner sep=1pt, circle] at ($\power*(g1)              +(g)$)  {};
  \node[fill=ProcessBlue, draw, inner sep=1pt, circle] at ($\power*(g1)-  (g1)+  (g2)+(h2)$) {};
  \node[fill=ProcessBlue, draw, inner sep=1pt, circle] at ($\power*(g1)-  (g1)+  (g2)+(g)$)  {};
  \node[fill=ProcessBlue, draw, inner sep=1pt, circle] (m1) at ($\power*(g1)-2*(g1)+2*(g2)+(h2)$) {};
  \node[fill=ProcessBlue, draw, inner sep=1pt, circle] (m2) at ($\power*(g1)-2*(g1)+2*(g2)+(g)$)  {};
  \draw[red] (m1) circle (3pt);
  \draw[red] (m2) circle (3pt);

  \node[fill=BurntOrange, draw, inner sep=1.5pt] at ($\power*(g1)-3*(g1)+3*(g2)+(h2)$) {};
  \node[fill=BurntOrange, draw, inner sep=1.5pt] at ($\power*(g1)-3*(g1)+3*(g2)+(g)$)  {};
  \node[fill=BurntOrange, draw, inner sep=1.5pt] at ($\power*(g2)+(h2)$) {};
  \node[fill=BurntOrange, draw, inner sep=1.5pt] at ($\power*(g2)+(g)$)  {};
  \node[fill=BurntOrange, draw, inner sep=1.5pt] at ($\power*(g2)+(h3)$) {};
  \node[fill=BurntOrange, draw, inner sep=1.5pt] at ($\power*(g2)+(h4)$) {};
  \node[fill=BurntOrange, draw, inner sep=1.5pt] at ($\power*(g2)+(h5)$) {};

  \draw [red, thick, dashed]
  let
  \p1 = ($4*(g1)$),
  \p2 = (xend),
  in
  (\x2,\y1) -- (-0.15,\y1) node [left, xshift=-2pt, scale=0.7] {$j=r+1$} ;

  \draw [red, thick, dashed]
  let
  \p1 = ($3*(g1)$),
  \p2 = (xend),
  in
  (\x2,\y1) -- (-0.15,\y1) node [left, xshift=-2pt, scale=0.7] {$j=r$} ;

\end{tikzpicture}
      \caption{$(x^u,y^v)^{r+1}J$}
    \end{subfigure}
    \begin{subfigure}[b]{0.5\textwidth}
      \begin{tikzpicture}
  \pgfmathsetmacro{\scalefactor}{0.17}
  \pgfmathsetmacro{\xa}{3}
  \pgfmathsetmacro{\yb}{4}
  \pgfmathsetmacro{\xshift}{0}
  \pgfmathsetmacro{\yshift}{0}
  \pgfmathsetmacro{\power}{6}
  \pgfmathsetmacro{\positionx}{-2.5}
  \pgfmathsetmacro{\positiony}{-1.5}
  \pgfmathsetmacro{\offset}{0.2}
  \renewcommand*{\Jstring}{0/10,2/7,3/5,5/4,7/2,9/0}%

  \computeParameters[1]
  \drawAxes
  \drawFullJ{0.3}
  \drawStairs{1}
  \drawYSections[0]

  \coordinate (g) at ($\scalefactor*(3,5)$);
  \coordinate (h1)at ($\scalefactor*(0,10)$);
  \coordinate (h2)at ($\scalefactor*(2,7)$);
  \coordinate (h3)at ($\scalefactor*(9,0)$);
  \coordinate (h4)at ($\scalefactor*(7,2)$);
  \coordinate (h5)at ($\scalefactor*(5,4)$);

  \draw[black]
  let
  \p1 = ($\power*(g1)$),
  \p2 = ($\power*(g1) - (g1) +(g2)$),
  \p3 = ($(\p1)+ (h1)$),
  \p4 = ($(\p1)+ (h2)$),
  \p5 = ($(\p1)+ (g) $),
  \p6 = ($(\p2)+ (h2)$),
  \p7 = ($(\p2)+ (g) $),
  in
  (yend) -- (\p3) -- (\x4,\y3) -- (\p4) -- (\x5,\y4) -- (\p5) --(\x6,\y5) --(\p6) --(\x7,\y6)--(\p7);

  \draw[black]
  let
  \p1 = ($\power*(g1) -   (g1) +  (g2)$),
  \p2 = ($\power*(g1) - 2*(g1) +2*(g2)$),
  \p3 = ($(\p1)+ (h1)$),
  \p4 = ($(\p1)+ (h2)$),
  \p5 = ($(\p1)+ (g) $),
  \p6 = ($(\p2)+ (h2)$),
  \p7 = ($(\p2)+ (g) $),
  in
  (\p4) -- (\x5,\y4) -- (\p5) --(\x6,\y5) --(\p6) --(\x7,\y6)--(\p7);

  \draw[black]
  let
  \p1 = ($\power*(g2)$),
  \p2 = ($\power*(g2) - (g2) +(g1)$),
  \p3 = ($(\p1)+ (h3)$),
  \p4 = ($(\p1)+ (h4)$),
  \p5 = ($(\p1)+ (h5) $),
  \p6 = ($(\p1)+ (g)$),
  \p7 = ($(\p1)+ (h2) $),
  \p8 = ($(\p2)+ (g) $),
  in
 (xend) -- (\p3) -- (\x3,\y4) -- (\p4) -- (\x4,\y5) -- (\p5) --(\x5,\y6) --(\p6) --(\x6,\y7)--(\p7) --(\x7,\y8)--(\p8);

  \draw[black]
  let
  \p1 = ($\power*(g2) - 2*(g2)  +2*(g1)$),
  \p2 = ($\power*(g2) - (g2) +(g1)$),
  \p3 = ($(\p2)+ (g) $),
  \p4 = ($(\p2)+ (h2) $),
  \p5 = ($(\p1)+ (g)$),
  in
  (\p3) -- (\x3,\y4) -- (\p4) -- (\x4,\y5) -- (\p5);

  \draw[black]
  let
  \p1 = ($\power*(g1) - 3*(g1) +3*(g2)$),
  \p2 = ($\power*(g1) - 4*(g1) +4*(g2)$),
  \p3 = ($(\p1)+ (g) $),
  \p4 = ($(\p2)+ (h2)$),
  \p5 = ($(\p2)+ (g) $),
  in
  (\p5) -- (\x5,\y4) -- (\p4)-- (\x4,\y3) -- (\p3);
  \draw[black]
  let
  \p1 = ($\power*(g1) - 2*(g1) +2*(g2)$),
  \p2 = ($\power*(g1) - 3*(g1) +3*(g2)$),
  \p3 = ($(\p1)+ (g) $),
  \p4 = ($(\p2)+ (h2)$),
  \p5 = ($(\p2)+ (g) $),
  in
  (\p5) -- (\x5,\y4) -- (\p4)-- (\x4,\y3) -- (\p3);

  \node[fill=ProcessBlue, draw, inner sep=1pt, circle] at ($\power*(g1)              +(h1)$) {};
  \node[fill=ProcessBlue, draw, inner sep=1pt, circle] at ($\power*(g1)              +(h2)$) {};
  \node[fill=ProcessBlue, draw, inner sep=1pt, circle] at ($\power*(g1)              +(g)$)  {};
  \node[fill=ProcessBlue, draw, inner sep=1pt, circle] at ($\power*(g1)-  (g1)+  (g2)+(h2)$) {};
  \node[fill=ProcessBlue, draw, inner sep=1pt, circle] at ($\power*(g1)-  (g1)+  (g2)+(g)$)  {};
  \node[fill=ProcessBlue, draw, inner sep=1pt, circle] at ($\power*(g1)-2*(g1)+2*(g2)+(h2)$) {};
  \node[fill=ProcessBlue, draw, inner sep=1pt, circle] at ($\power*(g1)-2*(g1)+2*(g2)+(g)$)  {};

  \node[fill=BurntOrange, draw, inner sep=1.5pt] at ($\power*(g1)-5*(g1)+5*(g2)+(h2)$) {};
  \node[fill=BurntOrange, draw, inner sep=1.5pt] at ($\power*(g1)-5*(g1)+5*(g2)+(g)$)  {};
  \node[fill=BurntOrange, draw, inner sep=1.5pt] at ($\power*(g2)+(h2)$) {};
  \node[fill=BurntOrange, draw, inner sep=1.5pt] at ($\power*(g2)+(g)$)  {};
  \node[fill=BurntOrange, draw, inner sep=1.5pt] at ($\power*(g2)+(h3)$) {};
  \node[fill=BurntOrange, draw, inner sep=1.5pt] at ($\power*(g2)+(h4)$) {};
  \node[fill=BurntOrange, draw, inner sep=1.5pt] at ($\power*(g2)+(h5)$) {};

  \node[black, fill, inner sep=1pt, circle] (m1) at ($\power*(g1)-4*(g1)+4*(g2)+(h2)$) {};
  \node[black, fill, inner sep=1pt, circle] (m2) at ($\power*(g1)-4*(g1)+4*(g2)+(g)$)  {};
  \node[black, fill, inner sep=1pt, circle] (m3) at ($\power*(g1)-3*(g1)+3*(g2)+(h2)$) {};
  \node[black, fill, inner sep=1pt, circle] (m4) at ($\power*(g1)-3*(g1)+3*(g2)+(g)$)  {};
  \draw[red] (m1) circle (3pt);
  \draw[red] (m2) circle (3pt);
  \draw[red] (m3) circle (3pt);
  \draw[red] (m4) circle (3pt);

\end{tikzpicture}
      \caption{$(x^u,y^v)^{r+3}J$}
    \end{subfigure}
    \caption{Visualization of the sets $\leftpart$, $\middlepart$, and
      $\rightpart$ in Example~\ref{example:1}.}
    \label{figure:example-thm-2}
  \end{figure}
\end{example}

\begin{corollary}\label{corollary:find-g}
  Let $u$, $v\in \N$, $J$ be an anchored monomial ideal,
  $r \ge \left\lceil \frac{\maxset{y}{J}}{v}\right\rceil$, and
  \begin{equation*}
    \middlepart = \left\{f \in \mingens[l]{(x^u,y^v)^{r+1}J}\longmid rv \le \deg_yf < (r+1)v \right\}\!.
  \end{equation*}
  Then $M \neq \emptyset$.
\end{corollary}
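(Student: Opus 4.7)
The plan is to recast the claim $M\ne\emptyset$ as the strict decrease of an explicit non-increasing step function, and then verify that decrease directly. For $b\in\N_0$, define
\[
  c(b) \coloneqq \min\{a\in\N_0 \mid x^ay^b \in (x^u,y^v)^{r+1}J\},
\]
setting $c(b)=\infty$ when no such $a$ exists. Because monomial ideals are closed under multiplication by monomials, $c$ is non-increasing. A straightforward check shows that $\bigl(c(b^*),b^*\bigr)$ is a minimal generator of $(x^u,y^v)^{r+1}J$ precisely when $c(b^*-1)>c(b^*)$. Consequently, the claim $M\ne\emptyset$ is equivalent to the strict inequality $c(rv-1)>c((r+1)v-1)$.

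I would evaluate $c$ by decomposing over the shift level and the choice of minimal generator of $J$. For $x^{a_j}y^{b_j}\in\mingens{J}$ and $b\ge b_j$, the smallest $x$-degree achievable from this generator via a factorization $x^{u(r+1-i)}y^{vi}\cdot g$ with $g\in J$ is
\[
  f_j(b) \coloneqq u\bigl(r+1-\lfloor(b-b_j)/v\rfloor\bigr)+a_j,
\]
obtained by taking the largest admissible $i$, and $c(b)=\min_{j:\,b_j\le b}f_j(b)$. A direct computation of floor functions yields the key identity
\[
  f_j(rv-1) = f_j((r+1)v-1) + u
\]
whenever both sides are defined, because $\lfloor((r+1)v-1-b_j)/v\rfloor$ exceeds $\lfloor(rv-1-b_j)/v\rfloor$ by exactly $1$.

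In the generic case $b_0<rv$, every $f_j$ is defined at $b=rv-1$, and the constant shift $u$ survives the minimum, giving $c(rv-1)=u+c((r+1)v-1)>c((r+1)v-1)$. The only delicate point is the boundary case $b_0=rv$ (which requires $v\mid b_0$ and $r=b_0/v$): there $f_0$ is undefined at $b=rv-1$, but one checks $f_0((r+1)v-1)=u(r+1)$, so removing $f_0$ from the minimum can only increase it, and we still obtain $c(rv-1)\ge u+c((r+1)v-1)>c((r+1)v-1)$. In either case the strict decrease forces a jump of $c$ at some $b^*\in[rv,(r+1)v)$, which produces the required minimal generator $\bigl(c(b^*),b^*\bigr)\in M$. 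The main obstacle is the boundary case, and it is dispatched by the observation that $f_0$ attains its maximum at the left endpoint of its domain.
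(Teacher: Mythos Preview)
Your argument is correct and takes a genuinely different route from the paper's proof. The paper argues by contradiction: assuming $M=\emptyset$, it invokes part~(\ref{pf:3}) of the proof of Theorem~\ref{theorem:generators-one-segment} to deduce that $(x^u,y^v)^{r+1+\ell}J$ has no minimal generators with $y$-degree in $[rv,(r+\ell)v)$ for any $\ell$, then for large $\ell$ exhibits an explicit element of the ideal in that $y$-range whose minimal divisor is forced outside the ideal. Your proof, by contrast, works entirely inside $(x^u,y^v)^{r+1}J$ and is self-contained: you track the minimal-$x$-degree function $c(b)$ and show directly that it drops by at least $u$ across the window $[rv,(r+1)v)$. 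This avoids any appeal to Theorem~\ref{theorem:generators-one-segment} and gives quantitative information (the drop is at least $u$) that the paper's contradiction does not.

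Two small points of rigor you should tighten. First, your formula $f_j(b)=u\bigl(r+1-\lfloor(b-b_j)/v\rfloor\bigr)+a_j$ silently omits the clamp $i\le r+1$; the correct expression is $u\max\bigl(0,\,r+1-\lfloor(b-b_j)/v\rfloor\bigr)+a_j$. In your range $b\le (r+1)v-1$ the floor is at most $r$, so the clamp is inactive and your formula is valid there---but you should say so. Second, the boundary case also absorbs the degenerate situation $r=0$ (forcing $b_0=0$ and $J=(1)$), where $c(rv-1)=c(-1)$ must be read as $+\infty$; with that convention your inequality $c(rv-1)\ge u+c((r+1)v-1)$ still holds trivially. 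Finally, your closing remark that ``$f_0$ attains its maximum at the left endpoint of its domain'' is not what dispatches the boundary case; what you actually use is simply that removing a term from a minimum cannot decrease it, so $c(rv-1)=u+\min_{j\ne 0}f_j((r+1)v-1)\ge u+c((r+1)v-1)$.
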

\begin{proof}
  Let $b\coloneqq \maxset{y}{J}$ and choose any natural number
  $\ell>\frac{b}{v}$. Assume that $M=\emptyset$. Then, by (\ref{pf:3})
  in the proof of Theorem~\ref{theorem:generators-one-segment}, we
  have
  \begin{equation*}
    (\mathcal{U}_j\cap\mathcal{L}_{j+1})\cap \mingens[l]{(x^u,y^v)^{r+\ell+1}J}=\emptyset
  \end{equation*}
  for all $r\le j\le r+\ell+1$.  Consequently, there are no minimal
  generators of $(x^u,y^v)^{r+\ell+1}J$ satisfying
  \begin{equation}\label{eq:degree-contradiction}
        rv\le\deg_yf<(r+\ell)v.
  \end{equation}
  However, by the choice of $\ell$, the element
  $f\coloneqq y^{rv}x^{(\ell+1)u}y^b\in(x^u,y^v)^{r+\ell+1}J$ fulfills
  these inequalities. Hence there must be a minimal generator
  $g\in\mingens[l]{(x^u,y^v)^{r+\ell+1}J}$ dividing $f$.  Then
  $\deg_x g\le (\ell+1)u$ and by \eqref{eq:degree-contradiction},
  $\deg_y g<rv$ must hold, and hence $g\notin(x^u,y^v)^{r+\ell+1}J$, a
  contradiction.
\end{proof}

Theorem~\ref{theorem:generators-one-segment} describes how the minimal
generators of $(x^u,y^v)^{r+1+\ell}J$ change in a predictable pattern
as $\ell$ increases.  In other words, the staircase of the ideal
$(x^u,y^v)^{r+1+\ell}J$ is formed by aligning the staircases of
certain repeatedly occurring subideals: from left to right, we begin
with the staircase of $(\leftpart)$, then we repeat the staircase of
$(\middlepart)$ for $\ell$ times, and finally add the
staircase of $(\rightpart)$.

We want to formalize this idea of ``connecting'' staircases.

\begin{definition}\label{definition:link}
  For $I$, $J\subseteq \K[x,y]$ monomial ideals, we define the
  \emph{link} $I\odot J$ with respect to $y$ as
  \begin{equation*}
    I\odot J\coloneqq \red{I}\cdot y^{\maxset{y}{J}} + \red{J}\cdot x^{\maxset{x}{I}},
  \end{equation*}
  see Figure~\ref{figure:link}. We write
  $I^{\odot\ell}\coloneqq\underbrace{I\odot I\odot\cdots\odot
    I}_{\ell}$. We call the monomial
  $x^{\maxset{x}{I}}y^{\maxset{y}{J}}$ the \emph{link point} of
  $I\odot J$.
\end{definition}

\begin{remark}\label{remark:link-x-y}
  As mentioned above in Remark~\ref{remark:why-y-sections}, we want to
  be able to reverse the roles of $x$ and $y$ throughout. However,
  this affects the order of the arguments of the link. Note that
  $I \xlink J = J\ylink I$, which is why we have to include the
  variable used for the partition in the notation of the link.
\end{remark}

\begin{remark}\label{remark:count-circ-generators}
  If $I$ and $J$ are monomial ideals with $\maxset{x}{I}=a$ and
  $\maxset{y}{J}=b$, then the minimal generators of $\red{I}\cdot y^b$
  and $\red{J}\cdot x^a$ only intersect in one element, namely the
  link point $x^ay^b$, cf.~Figure~\ref{figure:link} (indicated by an
  arrow).  Therefore,
  \begin{align*}
    \mingens{I\odot J}
    &= \mingens{\red{I}}\cdot y^{b}\cup\mingens{\red{J}}\cdot x^{a}\\
    &=\mingens{\red{I}}\cdot y^{b}\uplus \left(\mingens{\red{J}}\setminus \{y^{b}\}\right)\cdot x^{a}.
  \end{align*}
  This implies $\mu(I\odot J)= \mu(I) + \mu(J) -1$.
\end{remark}

\begin{remark}\label{remark:link-to-sum}
  Let $I$ be an anchored monomial ideal that can be written as the
  link of ideals, i.e.,
  \begin{equation*}
    I = J_0\odot\cdots\odot J_{k}
  \end{equation*}
  for some anchored monomial ideals $J_0,$ \dots, $J_{k}$. For
  $1\le i\le k$ we denote the link point between
  $(J_0\odot\cdots\odot J_{i-1})$ and $(J_{i}\odot\cdots\odot J_{k})$
  with $h_i$, and we set $h_0\coloneqq y^{\dist_{y}I}$ and
  $h_{k+1}\coloneqq x^{\dist_x I}$. Then it follows that
  \begin{equation*}
    I = \sum_{j=0}^{k}\gcd(h_{i},h_{i+1})J_{i}
  \end{equation*}
  and, in particular, $I:\gcd(h_{i},h_{i+1})=J_{i}$ for all
  $0\le i\le k$.
\end{remark}

With the necessary tools in place, we now return to the goal of
expressing $(x^u,y^v)^{r+1+\ell}J$ as the link of the ideals generated
by $\leftpart$, $\middlepart$, and $\rightpart$.  However, there are
``gaps'' between the staircases of these ideals, so we must first
expand them by suitable link points.

\begin{definition}\label{definition:stable-components}
  Let $u$, $v\in\N$, $J\subseteq \K[x,y]$ be an anchored monomial
  ideal, and $r \ge\left \lceil\frac{\maxset{y}{J}}{v}\right
  \rceil$. Further, we set $g\coloneqq x^{\alpha}y^{\beta}$ to be the
  minimal generator of $(x^u,y^v)^{r+1}J$ with
  \begin{equation*}
    \beta = \min\!\left\{\deg_y f\longmid f\in\mingens[l]{(x^u,y^v)^{r+1}J},
      \deg_yf\ge rv\right\}\!.
  \end{equation*}
  With $L$, $M$, and $R$ as in
  Theorem~\ref{theorem:generators-one-segment}, we define the
  \emph{$r$-segments} (with respect to~$y$) of $(x^u,y^v)J$ as
  \begin{align*}
    \Apart &\coloneqq(\leftpart):y^{\beta},\\
    \Hpart &\coloneqq\left(\middlepart  \cup \{x^{\alpha-u}y^{\beta+v}\}\right):x^{\alpha-u}y^{\beta},\text{ and}\\
    \Bpart &\coloneqq\left(\rightpart \cup\{g\}\right):x^{\alpha},
  \end{align*}

  For monomials $g$, $h\in\K[x,y]$ we define the $r$-segments
  of $(g,h)J$ as the $r$-segments of $\red{(g,h)}J$.
\end{definition}

\begin{figure}[h]
  \centering
  \begin{subfigure}{0.55\textwidth}
    \begin{tikzpicture}
  \pgfmathsetmacro{\scalefactor}{0.17}
  \pgfmathsetmacro{\xa}{3}
  \pgfmathsetmacro{\yb}{4}
  \pgfmathsetmacro{\xshift}{0}
  \pgfmathsetmacro{\yshift}{0}
  \pgfmathsetmacro{\power}{4}
  \pgfmathsetmacro{\positionx}{-4.5}
  \pgfmathsetmacro{\positiony}{-1.5}
  \pgfmathsetmacro{\offset}{0.2}
  \def\Jarray{(0,10),(2,7),(3,5),(5,4),(7,2),(9,0)}
  \renewcommand*{\Jstring}{0/10,2/7,3/5,5/4,7/2,9/0}%

  \computeParameters[1]
  \drawAxes
  \drawFullJ[1]{0.5}
  \drawStairs{1}

  \node[blue, fill, circle, inner sep =1pt] (M1) at ($2*(g1)+2*(g2)+\scalefactor*(3,5)$) {};
  \node (g) at ($(M1)+(0.7,0.3)$) {};
  \draw[red, -latex, thick] (g) node [right, xshift=-4pt, yshift=-2pt] {$g$} -- (M1) ;

  \coordinate (g) at ($\scalefactor*(3,5)$);
  \coordinate (h1)at ($\scalefactor*(0,10)$);
  \coordinate (h2)at ($\scalefactor*(2,7)$);
  \coordinate (h3)at ($\scalefactor*(9,0)$);
  \coordinate (h4)at ($\scalefactor*(7,2)$);
  \coordinate (h5)at ($\scalefactor*(5,4)$);

  \draw[black]
  let
  \p1 = ($\power*(g1)$),
  \p2 = ($\power*(g1) - (g1) +(g2)$),
  \p3 = ($(\p1)+ (h1)$),
  \p4 = ($(\p1)+ (h2)$),
  \p5 = ($(\p1)+ (g) $),
  \p6 = ($(\p2)+ (h2)$),
  \p7 = ($(\p2)+ (g) $),
  in
  (yend) -- (\p3) -- (\x4,\y3) -- (\p4) -- (\x5,\y4) -- (\p5) --(\x6,\y5) --(\p6) --(\x7,\y6)--(\p7);

  \draw[black]
  let
  \p1 = ($\power*(g1) -   (g1) +  (g2)$),
  \p2 = ($\power*(g1) - 2*(g1) +2*(g2)$),
  \p3 = ($(\p1)+ (h1)$),
  \p4 = ($(\p1)+ (h2)$),
  \p5 = ($(\p1)+ (g) $),
  \p6 = ($(\p2)+ (h2)$),
  \p7 = ($(\p2)+ (g) $),
  in
  (\p4) -- (\x5,\y4) -- (\p5) --(\x6,\y5) --(\p6) --(\x7,\y6)--(\p7);

  \draw[black]
  let
  \p1 = ($\power*(g2)$),
  \p2 = ($\power*(g2) - (g2) +(g1)$),
  \p3 = ($(\p1)+ (h3)$),
  \p4 = ($(\p1)+ (h4)$),
  \p5 = ($(\p1)+ (h5) $),
  \p6 = ($(\p1)+ (g)$),
  \p7 = ($(\p1)+ (h2) $),
  \p8 = ($(\p2)+ (g) $),
  in
 (xend) -- (\p3) -- (\x3,\y4) -- (\p4) -- (\x4,\y5) -- (\p5) --(\x5,\y6) --(\p6) --(\x6,\y7)--(\p7) --(\x7,\y8)--(\p8);

  \draw[black]
  let
  \p1 = ($\power*(g2) - 2*(g2)  +2*(g1)$),
  \p2 = ($\power*(g2) - (g2) +(g1)$),
  \p3 = ($(\p2)+ (g) $),
  \p4 = ($(\p2)+ (h2) $),
  \p5 = ($(\p1)+ (g)$),
  in
  (\p3) -- (\x3,\y4) -- (\p4) -- (\x4,\y5) -- (\p5);

  \draw [red, thick, dashed]
  let
  \p1 = ($3*(g1)$),
  \p2 = (xend),
  in
  (\x2,\y1) -- (-0.15,\y1) node [left, xshift=-2pt, scale=0.7] {$j=r$} ;

\end{tikzpicture}
  \end{subfigure}\hfill
  \begin{subfigure}{0.45\textwidth}
    \begin{tikzpicture}
  \pgfmathsetmacro{\scalefactor}{0.17}
  \pgfmathsetmacro{\xa}{3}
  \pgfmathsetmacro{\yb}{4}
  \pgfmathsetmacro{\xshift}{0}
  \pgfmathsetmacro{\yshift}{0}
  \pgfmathsetmacro{\power}{4}
  \pgfmathsetmacro{\positionx}{0}
  \pgfmathsetmacro{\positiony}{-10}
  \pgfmathsetmacro{\offset}{0.2}
  \renewcommand*{\Jstring}{0/10,2/7,3/5,5/4,7/2,9/0}%

  \computeParameters[1]
  \drawAxes
  \drawFullJ[2]{1}
  \drawStairs{1}

  \coordinate (g) at ($\scalefactor*(3,5)$);
  \coordinate (h1)at ($\scalefactor*(0,10)$);
  \coordinate (h2)at ($\scalefactor*(2,7)$);
  \coordinate (h3)at ($\scalefactor*(9,0)$);
  \coordinate (h4)at ($\scalefactor*(7,2)$);
  \coordinate (h5)at ($\scalefactor*(5,4)$);

  \draw[blue, fill]
  let
  \p1 = ($\power*(g1) - 2*(g1) +2*(g2)$),
  \p2 = ($(\p1)+ (g) $),
  in
  (0,\y2) circle (2pt) node[left] {$y^{\beta}$};

  \draw[PineGreen, fill]
  let
  \p1 = ($\power*(g1) - 2*(g1) +2*(g2)$),
  \p2 = ($(\p1)+ (g) $),
  in
  (\x2,0) circle (2pt) node[above left] {$x^{\alpha}$};

  \draw[black, thick, fill] ($\power*(g1) - 2*(g1) +(g2)+ (g)$) circle (2pt);
  \draw[red,   thick, -latex, shorten >=3pt]
  let
  \p1 = ($\power*(g1) - 2*(g1) +2*(g2)$),
  \p2 = ($(\p1)+ (g) -(g2)$),
  \p3 = ($(\p2) - (0.5,0.5)$),
  in
  (\p3)  node[black, below, xshift=-3pt, yshift=3pt] {$x^{\alpha-u}y^{\beta}$}  -- (\p2);

  \draw[blue, very thick]
  let
  \p1 = ($\power*(g1)$),
  \p2 = ($\power*(g1) - (g1) +(g2)$),
  \p3 = ($(\p1)+ (h1)$),
  \p4 = ($(\p1)+ (h2)$),
  \p5 = ($(\p1)+ (g) $),
  \p6 = ($(\p2)+ (h2)$),
  \p7 = ($(\p2)+ (g) $),
  in
  (yend) -- (\p3) -- (\x4,\y3) -- (\p4) -- (\x5,\y4) -- (\p5) --(\x6,\y5) --(\p6) --(\x7,\y6)--(\p7);

  \draw[blue, very thick]
  let
  \p1 = ($\power*(g1) -   (g1) +  (g2)$),
  \p2 = ($\power*(g1) - 2*(g1) +2*(g2)$),
  \p3 = ($(\p1)+ (h1)$),
  \p4 = ($(\p1)+ (h2)$),
  \p5 = ($(\p1)+ (g) $),
  \p6 = ($(\p2)+ (h2)$),
  \p7 = ($(\p2)+ (g) $),
  in
  (\p4) -- (\x5,\y4) -- (\p5) --(\x6,\y5) --(\p6) --(\x7,\y6)--(\p7);

  \draw[blue, dotted, very thick]
  let
  \p1 = ($\power*(g1) - 2*(g1) +2*(g2)$),
  \p2 = ($(\p1)+ (g) $),
  \p3 = (xend),
  in
  (\p2) -- (\x3,\y2);

  \draw[blue, very thick]
  let
  \p1 = ($\power*(g1) - 2*(g1) +2*(g2)$),
  \p2 = ($(\p1)+ (g) $),
  in
  (yend) -- (0,\y2) -- (\p2);

  \fill[blue,fill opacity=0.05]
  let
  \p1 = ($\power*(g1)$),
  \p2 = ($(\p1)+ (h1)$),
  \p3 = ($(\p1)+ (h2)$),
  \p4 = ($(\p1)+ (g)$),
  \p5 = ($\power*(g1) - (g1) +(g2)$),
  \p6 = ($(\p5)+ (h2)$),
  \p8 = (xend),
  \p9 = (yend),
  in
  (\p9) -- (\p2) -- (\x3,\y2) -- (\p3) --(\x4,\y3) -- (\p4) -- (\x6,\y4) -- (\x6,\y9);

  \fill[blue,fill opacity=0.05]
  let
  \p1 = ($\power*(g1) - (g1) +(g2)$),
  \p2 = ($\power*(g1) - 2*(g1) +2*(g2)$),
  \p3 = ($(\p1)+ (h1)$),
  \p4 = ($(\p1)+ (h2)$),
  \p5 = ($(\p1)+ (g) $),
  \p6 = ($(\p2)+ (h2)$),
  \p7 = ($(\p2)+ (g) $),
  \p8 = (xend),
  \p9 = (yend),
  in
   (\x4,\y9) -- (\p4) -- (\x5,\y4) -- (\p5) --(\x6,\y5) --(\p6) --(\x7,\y6)--(\p7) -- (\x8,\y7) --(\x8,\y9);

  \draw[PineGreen, very thick]
  let
  \p1 = ($\power*(g2) - 2*(g2)  +2*(g1)$),
  \p2 = ($(\p1)+ (g) $),
  \p3 = (xend),
  in
  (\p2) -- (\x2,0) -- (xend);

  \draw[PineGreen, very thick]
  let
  \p1 = ($\power*(g2)$),
  \p2 = ($\power*(g2) - (g2) +(g1)$),
  \p3 = ($(\p1)+ (h3)$),
  \p4 = ($(\p1)+ (h4)$),
  \p5 = ($(\p1)+ (h5) $),
  \p6 = ($(\p1)+ (g)$),
  \p7 = ($(\p1)+ (h2) $),
  \p8 = ($(\p2)+ (g) $),
  in
 (xend) -- (\p3) -- (\x3,\y4) -- (\p4) -- (\x4,\y5) -- (\p5) --(\x5,\y6) --(\p6) --(\x6,\y7)--(\p7) --(\x7,\y8)--(\p8);

  \draw[PineGreen, very thick]
  let
  \p1 = ($\power*(g2) - 2*(g2)  +2*(g1)$),
  \p2 = ($\power*(g2) - (g2) +(g1)$),
  \p3 = ($(\p2)+ (g) $),
  \p4 = ($(\p2)+ (h2) $),
  \p5 = ($(\p1)+ (g)$),
  in
  (\p3) -- (\x3,\y4) -- (\p4) -- (\x4,\y5) -- (\p5);

  \draw[PineGreen, dotted, very thick]
  let
  \p1 = ($\power*(g2) - 2*(g2) +2*(g1)$),
  \p2 = ($(\p1)+ (g) $),
  \p3 = (yend),
  in
  (\x2,\y3) -- (\p2);

  \fill[PineGreen, fill opacity=0.05]
  let
  \p1 = ($\power*(g2)$),
  \p2 = ($\power*(g2) - (g2) +(g1)$),
  \p3 = ($(\p1)+ (h3)$),
  \p4 = ($(\p1)+ (h4)$),
  \p5 = ($(\p1)+ (h5) $),
  \p6 = ($(\p1)+ (g)$),
  \p7 = ($(\p1)+ (h2) $),
  \p8 = ($(\p2)+ (g) $),
  \p9 = (xend),
  in
 (\p9) -- (\p3) -- (\x3,\y4) -- (\p4) -- (\x4,\y5) -- (\p5) --(\x5,\y6) --(\p6) --(\x6,\y7)--(\p7) --(\x7,\y8)--(\p8) -- (\x9,\y8);

 \fill[PineGreen, opacity=0.05]
  let
  \p1 = ($\power*(g2) - 2*(g2)  +2*(g1)$),
  \p2 = ($\power*(g2) - (g2) +(g1)$),
  \p3 = ($(\p2)+ (g) $),
  \p4 = ($(\p2)+ (h2) $),
  \p5 = ($(\p1)+ (g)$),
  \p6 = (xend),
  \p7 = (yend),
  in
  (\x5,\y7) -- (\p5) -- (\x4,\y5) -- (\p4) -- (\x3,\y4) -- (\p3) -- (\x6,\y3) -- (\x6,\y7);

  \fill[pattern=north west lines]
  let
  \p1 = ($\power*(g1) - (g1) +(g2)$),
  \p2 = ($\power*(g1) - 2*(g1) +2*(g2)$),
  \p3 = ($(\p1)+ (h1)$),
  \p4 = ($(\p1)+ (h2)$),
  \p5 = ($(\p1)+ (g) $),
  \p6 = ($(\p2)+ (h2)$),
  \p7 = ($(\p2)+ (g) $),
  in
  (\p5) -- (\x5, \y7) -- (\p7) -- (\x7, \y6) -- (\p6) -- (\x6,\y5) ;

\end{tikzpicture}

  \end{subfigure}
  \caption{Left: Visualization of the point $g$ from
    Definition~\ref{definition:stable-components}, with $r=3$. Right:
    Above $y^{\beta}$ is the staircase of $y^{\beta}\Apart$ (in blue),
    and to the right of $x^{\alpha}$ is the staircase of
    $x^{\alpha}\Bpart$ (in green).  The striped area in the
    bottom-right corner of $y^{\beta}\Bpart$ is the staircase of
    $x^{\alpha-u}y^{\beta}\Hpart$. }
  \label{figure:ABH}
\end{figure}

\begin{remark}\label{remark:relation-ABH-LRM}
  With the notation of Definition~\ref{definition:stable-components},
  we have
  \begin{align*}
  \Apart &= \left((x^u,y^v)^{r+1}J\right):y^{\beta},\\
  \Hpart &= \left((x^u,y^v)^{r+1}J\right):x^{\alpha-u}y^{\beta},\text{ and}\\
  \Bpart &= \left((x^u,y^v)^{r+1}J\right):x^{\alpha}.
  \end{align*}
  In particular, $g=x^{\alpha}y^{\beta}$ is the link point of
  $\Apart\odot\Bpart=(x^u,y^v)^{r+1}J$, see
  Figure~\ref{figure:ABH}. Moreover,
  \begin{equation*}
    \leftpart   =\mingens{A}y^{\beta}, \quad
    \rightpart \uplus \{g\} = \mingens{\Bpart}x^{\alpha},
    \quad\text{and}\quad
    \middlepart \uplus \{x^{\alpha-u}y^{\beta+v}\}= \mingens{\Hpart}x^{\alpha-u}y^{\beta}.
  \end{equation*}
\end{remark}

\begin{corollary}\label{corollary:one-segment-link}
  Let $u$, $v\in\N$ and $J\subseteq\K[x,y]$ be an anchored monomial
  ideal, $r \ge \left\lceil \frac{\maxset{y}{J}}{v}\right\rceil$, and
  $\Apart$, $\Hpart$, $\Bpart$ the $r$-segments of $(x^u,y^v)J$.
  Then, for all $\ell\in \N_0$,
  \begin{equation*}
    (x^u,y^v)^{r+1+\ell}J = \Apart\odot \Hpart^{\odot\ell}\odot \Bpart.
  \end{equation*}
\end{corollary}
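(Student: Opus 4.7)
The plan is to compute the minimal generators of $\Apart \odot \Hpart^{\odot\ell} \odot \Bpart$ via iterated application of Remark~\ref{remark:count-circ-generators} and to match the result term-by-term with the disjoint union describing $\mingens[l]{(x^u,y^v)^{r+1+\ell}J}$ in Theorem~\ref{theorem:generators-one-segment}. As a preparatory step, one checks that $\odot$ is associative on anchored ideals: expanding Definition~\ref{definition:link} shows that both $(I\odot J)\odot K$ and $I\odot(J\odot K)$ equal
$\red{I}\cdot y^{\maxset{y}{J}+\maxset{y}{K}} + \red{J}\cdot x^{\maxset{x}{I}}y^{\maxset{y}{K}} + \red{K}\cdot x^{\maxset{x}{I}+\maxset{x}{J}}$,
so the iterated link $\Apart \odot \Hpart^{\odot\ell} \odot \Bpart$ is unambiguous.

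Next I would pin down the four widths $\maxset{x}{\Apart}=\alpha$, $\maxset{y}{\Bpart}=\beta$, $\maxset{x}{\Hpart}=u$, and $\maxset{y}{\Hpart}=v$. Since the minimal generators of $(x^u,y^v)^{r+1}J$ form an antichain and $g=x^\alpha y^\beta$ attains the minimum $y$-degree in $\leftpart$, it also attains the maximum $x$-degree there; dividing by $y^\beta$ gives $x^\alpha \in \mingens{\Apart}$ realizing $\maxset{x}{\Apart}=\alpha$, and symmetrically $\maxset{y}{\Bpart}=\beta$. For $\Hpart$, Corollary~\ref{corollary:find-g} forces $\middlepart \ne \emptyset$, hence $\beta<(r+1)v$ and $g \in \middlepart$; then $x^u = g/(x^{\alpha-u}y^\beta) \in \mingens{\Hpart}$ realizes $\maxset{x}{\Hpart}=u$, while the auxiliary monomial $x^{\alpha-u}y^{\beta+v}$ contributes $y^v \in \mingens{\Hpart}$ giving $\maxset{y}{\Hpart}=v$.

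With the widths in place and $\odot$ associative, iterating Remark~\ref{remark:count-circ-generators} yields
\begin{equation*}
  \mingens[l]{\Apart \odot \Hpart^{\odot\ell} \odot \Bpart} = \mingens{\Apart}\,y^{\ell v+\beta} \uplus \biguplus_{i=1}^{\ell}\bigl(\mingens{\Hpart}\setminus\{y^v\}\bigr)\, x^{\alpha+(i-1)u}\, y^{(\ell-i)v+\beta} \uplus \bigl(\mingens{\Bpart}\setminus\{y^\beta\}\bigr)\, x^{\alpha+\ell u}.
\end{equation*}
Substituting the three identifications $\mingens{\Apart}\,y^\beta=\leftpart$, $(\mingens{\Hpart}\setminus\{y^v\})\,x^{\alpha-u}y^\beta=\middlepart$, and $(\mingens{\Bpart}\setminus\{y^\beta\})\,x^\alpha=\rightpart$ extracted from Remark~\ref{remark:relation-ABH-LRM}, the right-hand side collapses to $y^{v\ell}\leftpart \uplus \biguplus_{i=1}^{\ell} x^{ui}\,y^{v(\ell-i)}\,\middlepart \uplus x^{u\ell}\rightpart$, matching Theorem~\ref{theorem:generators-one-segment} exactly. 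The main technical obstacle is pinning down the widths of $\Hpart$, in particular confirming that the augmentation $x^{\alpha-u}y^{\beta+v}$ contributes precisely $y^v$ without introducing spurious minimal generators; once this is settled, the rest is careful bookkeeping of the shifts in the iterated link formula.
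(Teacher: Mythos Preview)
Your proposal is correct and follows essentially the same route as the paper: both arguments compute the minimal generating set of the iterated link and match it against the decomposition of $\mingens[l]{(x^u,y^v)^{r+1+\ell}J}$ from Theorem~\ref{theorem:generators-one-segment} via the identifications in Remark~\ref{remark:relation-ABH-LRM}. The paper's version is terser---it writes the link out as a sum of shifted ideals and reads off the generators directly, taking the widths of $\Apart$, $\Hpart$, $\Bpart$ and the associativity of $\odot$ for granted---while you make those ingredients explicit, which is a reasonable thing to do since the paper never formally verifies them. Your identification $(\mingens{\Hpart}\setminus\{y^v\})x^{\alpha-u}y^\beta=\middlepart$ is a mild rephrasing of the last equality in Remark~\ref{remark:relation-ABH-LRM}, and it does require $y^v\in\mingens{\Hpart}$ (equivalently $y^{v-1}\notin\Hpart$), which you flag as the main technical point; one clean way to see this is that otherwise some $x^{\alpha-u}y^{\beta+v'}$ with $v'<v$ would lie in $\middlepart$, and then applying Theorem~\ref{theorem:generators-one-segment} with $\ell=1$ would place both $x^\alpha y^{\beta+v}\in y^v\leftpart$ and $x^\alpha y^{\beta+v'}\in x^u\middlepart$ in the disjoint union of minimal generators, a contradiction.
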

\begin{proof}
  It follows from Theorem~\ref{theorem:generators-one-segment} in
  combination with the last equalities in
  Remark~\ref{remark:relation-ABH-LRM} that
  \begin{align*}
    &\mingens[l]{(x^u,y^v)^{r+1+\ell}J}
      =y^{v\ell}\leftpart\uplus \biguplus_{j=1}^{\ell}x^{ju}y^{(\ell-j)v}\middlepart \uplus x^{u\ell}\rightpart\\
    &=y^{v\ell+\beta}\mingens{\Apart}\uplus \biguplus_{j=0}^{\ell-1}\left(x^{\alpha +ju}y^{\beta+(\ell-j-1)v}\mingens{\Hpart}\setminus\{x^{\alpha+ju}y^{\beta+(\ell-j)v}\}\right)\\
    &\white{=y^{v\ell+\beta}\mingens{\Apart}}\uplus x^{\alpha+u\ell}\mingens{\Bpart}\setminus\{x^{\alpha+u\ell}y^{\beta}\}.
  \end{align*}
  This is exactly the minimal generating set of
  \begin{equation*}
    \Apart\odot \Hpart^{\odot\ell}\odot \Bpart
    =\Apart y^{v\ell+\beta}+\sum_{j=0}^{\ell-1}\Hpart x^{\alpha+ju}y^{\beta+(\ell-1-j)v}+\Bpart x^{\alpha+\ell u},
  \end{equation*}
  cf.~Remark~\ref{remark:count-circ-generators}. Note that the
  generators $x^{\alpha+ju}y^{\beta+(\ell-j)v}$ for $0\le j\le \ell$
  are the link points in $\Apart\odot \Hpart^{\odot\ell}\odot \Bpart$.
\end{proof}

\begin{remark}
  Corollary~\ref{corollary:one-segment-link} confirms that
  $r$-segments behave as intended and rephrases
  Theorem~\ref{theorem:generators-one-segment} in the language of
  ideal links. Continuing Figures~\ref{figure:example-thm-2}
  and~\ref{figure:ABH}, we provide a visualization in
  Figure~\ref{figure:thm-3-one-segment}.
\end{remark}

\begin{figure}[h]
  \centering
  \begin{minipage}[b]{0.55\textwidth}
    \caption[A visualization of
    Corollary~\ref{corollary:one-segment-link}.]{In
      $(x^u,y^v)^{r+3}J$, the staircase of $\Hpart$ (striped black) is
      repeated $\ell=2$ times in the middle. On the top left (in blue)
      is the staircase of $\Apart$ and on the bottom right (in green)
      is the staircase of $\Bpart$. Note that there is a (shifted)
      copy of $\Hpart$ included in~$\Apart$.}
    \label{figure:thm-3-one-segment}
  \end{minipage}\hspace*{-2em}
  \begin{minipage}[b]{0.4\textwidth}
    \centering \begin{tikzpicture}
  \pgfmathsetmacro{\scalefactor}{0.1}
  \pgfmathsetmacro{\xa}{3}
  \pgfmathsetmacro{\yb}{4}
  \pgfmathsetmacro{\xshift}{0}
  \pgfmathsetmacro{\yshift}{0}
  \pgfmathsetmacro{\power}{6}
  \pgfmathsetmacro{\positionx}{0}
  \pgfmathsetmacro{\positiony}{-10}
  \pgfmathsetmacro{\offset}{0.2}
  \renewcommand*{\Jstring}{0/10,2/7,3/5,5/4,7/2,9/0}%

  \computeParameters[1]
  \drawAxes
  \drawFullJ[2]{1}
  \drawStairs{1}

  \coordinate (g) at ($\scalefactor*(3,5)$);
  \coordinate (h1)at ($\scalefactor*(0,10)$);
  \coordinate (h2)at ($\scalefactor*(2,7)$);
  \coordinate (h3)at ($\scalefactor*(9,0)$);
  \coordinate (h4)at ($\scalefactor*(7,2)$);
  \coordinate (h5)at ($\scalefactor*(5,4)$);

  \draw[blue, very thick]
  let
  \p1 = ($\power*(g1)$),
  \p2 = ($\power*(g1) - (g1) +(g2)$),
  \p3 = ($(\p1)+ (h1)$),
  \p4 = ($(\p1)+ (h2)$),
  \p5 = ($(\p1)+ (g) $),
  \p6 = ($(\p2)+ (h2)$),
  \p7 = ($(\p2)+ (g) $),
  in
  (yend) -- (\p3) -- (\x4,\y3) -- (\p4) -- (\x5,\y4) -- (\p5) --(\x6,\y5) --(\p6) --(\x7,\y6)--(\p7);

  \draw[blue, very thick]
  let
  \p1 = ($\power*(g1) -   (g1) +  (g2)$),
  \p2 = ($\power*(g1) - 2*(g1) +2*(g2)$),
  \p3 = ($(\p1)+ (h1)$),
  \p4 = ($(\p1)+ (h2)$),
  \p5 = ($(\p1)+ (g) $),
  \p6 = ($(\p2)+ (h2)$),
  \p7 = ($(\p2)+ (g) $),
  in
  (\p4) -- (\x5,\y4) -- (\p5) --(\x6,\y5) --(\p6) --(\x7,\y6)--(\p7);

  \draw[blue, dotted, very thick]
  let
  \p1 = ($\power*(g1) - 2*(g1) +2*(g2)$),
  \p2 = ($(\p1)+ (g) $),
  \p3 = (xend),
  in
  (\p2) -- (\x3,\y2);

  \draw[blue, very thick]
  let
  \p1 = ($\power*(g1) - 2*(g1) +2*(g2)$),
  \p2 = ($(\p1)+ (g) $),
  in
  (yend) -- (0,\y2) -- (\p2);

  \fill[blue,fill opacity=0.05]
  let
  \p1 = ($\power*(g1)$),
  \p2 = ($(\p1)+ (h1)$),
  \p3 = ($(\p1)+ (h2)$),
  \p4 = ($(\p1)+ (g)$),
  \p5 = ($\power*(g1) - (g1) +(g2)$),
  \p6 = ($(\p5)+ (h2)$),
  \p8 = (xend),
  \p9 = (yend),
  in
  (\p9) -- (\p2) -- (\x3,\y2) -- (\p3) --(\x4,\y3) -- (\p4) -- (\x6,\y4) -- (\x6,\y9);

  \fill[blue,fill opacity=0.05]
  let
  \p1 = ($\power*(g1) - (g1) +(g2)$),
  \p2 = ($\power*(g1) - 2*(g1) +2*(g2)$),
  \p3 = ($(\p1)+ (h1)$),
  \p4 = ($(\p1)+ (h2)$),
  \p5 = ($(\p1)+ (g) $),
  \p6 = ($(\p2)+ (h2)$),
  \p7 = ($(\p2)+ (g) $),
  \p8 = (xend),
  \p9 = (yend),
  in
  (\x4,\y9) -- (\p4) -- (\x5,\y4) -- (\p5) --(\x6,\y5) --(\p6) --(\x7,\y6)--(\p7) -- (\x8,\y7) --(\x8,\y9);

  \draw[PineGreen, very thick]
  let
  \p1 = ($\power*(g2) - 2*(g2)  +2*(g1)$),
  \p2 = ($(\p1)+ (g) $),
  \p3 = (xend),
  in
  (\p2) -- (\x2,0) -- (xend);

  \draw[PineGreen, very thick]
  let
  \p1 = ($\power*(g2)$),
  \p2 = ($\power*(g2) - (g2) +(g1)$),
  \p3 = ($(\p1)+ (h3)$),
  \p4 = ($(\p1)+ (h4)$),
  \p5 = ($(\p1)+ (h5) $),
  \p6 = ($(\p1)+ (g)$),
  \p7 = ($(\p1)+ (h2) $),
  \p8 = ($(\p2)+ (g) $),
  in
 (xend) -- (\p3) -- (\x3,\y4) -- (\p4) -- (\x4,\y5) -- (\p5) --(\x5,\y6) --(\p6) --(\x6,\y7)--(\p7) --(\x7,\y8)--(\p8);

  \draw[PineGreen, very thick]
  let
  \p1 = ($\power*(g2) - 2*(g2)  +2*(g1)$),
  \p2 = ($\power*(g2) - (g2) +(g1)$),
  \p3 = ($(\p2)+ (g) $),
  \p4 = ($(\p2)+ (h2) $),
  \p5 = ($(\p1)+ (g)$),
  in
  (\p3) -- (\x3,\y4) -- (\p4) -- (\x4,\y5) -- (\p5);

  \draw[PineGreen, dotted, very thick]
  let
  \p1 = ($\power*(g2) - 2*(g2) +2*(g1)$),
  \p2 = ($(\p1)+ (g) $),
  \p3 = (yend),
  in
  (\x2,\y3) -- (\p2);

  \fill[PineGreen, fill opacity=0.05]
  let
  \p1 = ($\power*(g2)$),
  \p2 = ($\power*(g2) - (g2) +(g1)$),
  \p3 = ($(\p1)+ (h3)$),
  \p4 = ($(\p1)+ (h4)$),
  \p5 = ($(\p1)+ (h5) $),
  \p6 = ($(\p1)+ (g)$),
  \p7 = ($(\p1)+ (h2) $),
  \p8 = ($(\p2)+ (g) $),
  \p9 = (xend),
  in
 (\p9) -- (\p3) -- (\x3,\y4) -- (\p4) -- (\x4,\y5) -- (\p5) --(\x5,\y6) --(\p6) --(\x6,\y7)--(\p7) --(\x7,\y8)--(\p8) -- (\x9,\y8);

  \fill[PineGreen, opacity=0.05]
  let
  \p1 = ($\power*(g2) - 2*(g2)  +2*(g1)$),
  \p2 = ($\power*(g2) - (g2) +(g1)$),
  \p3 = ($(\p2)+ (g) $),
  \p4 = ($(\p2)+ (h2) $),
  \p5 = ($(\p1)+ (g)$),
  \p6 = (xend),
  \p7 = (yend),
  in
  (\x5,\y7) -- (\p5) -- (\x4,\y5) -- (\p4) -- (\x3,\y4) -- (\p3) -- (\x6,\y3) -- (\x6,\y7);


  \fill[pattern=north west lines]
  let
  \p1 = ($\power*(g1) - (g1) + (g2)$),
  \p2 = ($\power*(g1) - 2*(g1) +2*(g2)$),
  \p3 = ($(\p1)+ (h1)$),
  \p4 = ($(\p1)+ (h2)$),
  \p5 = ($(\p1)+ (g) $),
  \p6 = ($(\p2)+ (h2)$),
  \p7 = ($(\p2)+ (g) $),
  in
  (\p5) -- (\x5, \y7) -- (\p7) -- (\x7, \y6) -- (\p6) -- (\x6,\y5) -- (\p5);

  \draw[black, very thick, pattern=north west lines]
  let
  \p1 = ($\power*(g1) - 2*(g1) + 2*(g2)$),
  \p2 = ($\power*(g1) - 3*(g1) +3*(g2)$),
  \p3 = ($(\p1)+ (h1)$),
  \p4 = ($(\p1)+ (h2)$),
  \p5 = ($(\p1)+ (g) $),
  \p6 = ($(\p2)+ (h2)$),
  \p7 = ($(\p2)+ (g) $),
  in
  (\p5) -- (\x5, \y7) -- (\p7) -- (\x7, \y6) -- (\p6) -- (\x6,\y5) -- (\p5);

  \draw[black, dotted, very thick]
  let
  \p1 = ($\power*(g1) - 3*(g1) +3*(g2)$),
  \p2 = ($(\p1)+ (g) $),
  \p3 = (xend),
  in
  (\x3,\y2) -- (\p2);
  \draw[black, dotted, very thick]
  let
  \p1 = ($\power*(g1) - 2*(g1) + 2*(g2)$),
  \p2 = ($(\p1)+ (g) $),
  \p3 = (yend),
  in
  (\x2,\y3) -- (\p2);

  \fill[black, opacity=0.05]
  let
  \p1 = ($\power*(g1) - 2*(g1) + 2*(g2)$),
  \p2 = ($\power*(g1) - 3*(g1) +3*(g2)$),
  \p3 = ($(\p1)+ (h1)$),
  \p4 = ($(\p1)+ (h2)$),
  \p5 = ($(\p1)+ (g) $),
  \p6 = ($(\p2)+ (h2)$),
  \p7 = ($(\p2)+ (g) $),
  \p8 = (xend),
  \p9 = (yend),
  in
  (\x5,\y9) -- (\p5)  -- (\x6,\y5) -- (\p6) -- (\x7, \y6) -- (\p7) -- (\x8,\y7) -- (\x8,\y9);

      \draw[black, very thick, pattern=north west lines]
  let
  \p1 = ($\power*(g1) - 3*(g1) + 3*(g2)$),
  \p2 = ($\power*(g1) - 4*(g1) + 4*(g2)$),
  \p3 = ($(\p1)+ (h1)$),
  \p4 = ($(\p1)+ (h2)$),
  \p5 = ($(\p1)+ (g) $),
  \p6 = ($(\p2)+ (h2)$),
  \p7 = ($(\p2)+ (g) $),
  in
  (\p5) -- (\x5, \y7) -- (\p7) -- (\x7, \y6) -- (\p6) -- (\x6,\y5) -- (\p5);

  \draw[black, dotted, very thick]
  let
  \p1 = ($\power*(g1) - 4*(g1) +4*(g2)$),
  \p2 = ($(\p1)+ (g) $),
  \p3 = (xend),
  in
  (\x3,\y2) -- (\p2);
  \draw[black, dotted, very thick]
  let
  \p1 = ($\power*(g1) - 3*(g1) + 3*(g2)$),
  \p2 = ($(\p1)+ (g) $),
  \p3 = (yend),
  in
  (\x2,\y3) -- (\p2);

  \fill[black, opacity=0.05]
  let
  \p1 = ($\power*(g1) - 3*(g1) + 3*(g2)$),
  \p2 = ($\power*(g1) - 4*(g1) + 4*(g2)$),
  \p3 = ($(\p1)+ (h1)$),
  \p4 = ($(\p1)+ (h2)$),
  \p5 = ($(\p1)+ (g) $),
  \p6 = ($(\p2)+ (h2)$),
  \p7 = ($(\p2)+ (g) $),
  \p8 = (xend),
  \p9 = (yend),
  in
  (\x5,\y9) -- (\p5)  -- (\x6,\y5) -- (\p6) -- (\x7, \y6) -- (\p7) -- (\x8,\y7) -- (\x8,\y9);

\end{tikzpicture}


    \vspace*{0.5ex}
  \end{minipage}\hfill
\end{figure}

\begin{remark}\label{remark:size-alpha-beta}
  With the notation of Definition~\ref{definition:stable-components},
  \begin{equation*}
    \beta < (r+1)v \quad\text{and}\quad \alpha \le (r+1)u
  \end{equation*}
  holds.  The first inequality follows from
  Corollary~\ref{corollary:find-g}, while the second follows from
  Remark~\ref{remark:y-sections-and-x-degree}, since by definition,
  $g=x^{\alpha}y^{\beta}$ is a minimal generator of $(x^u,y^v)^{r+1}J$
  that is in $\mathcal{U}_r$. As noted in
  Remark~\ref{remark:y-sections-and-x-degree}, if we have equality
  $\alpha = (r+1)u$, then this implies $rv = \dist_y J$,
  $g=x^{(r+1)u}y^{\dist_y(J)}$, and $\beta = \dist_y(J)$.
\end{remark}

We are now set to prove the main result of this section.
\begin{theorem}\label{theorem:gluing-more-segments}
  Let $I\subseteq \K[x,y]$ be an anchored monomial ideal such that
  $\mingens{I}=P^{\ast}(I)=\{g_1, \dots, g_{k+1}\}$ and the $g_i$ are
  ordered in descending $y$-degree. Further, let $J\subseteq \K[x,y]$
  be an anchored monomial ideal, and for $1\le i\le k$, let
  $v_i\coloneqq \maxsetdeg{y}{g_i,g_{i+1}}$,
  $u_i\coloneqq \maxsetdeg{x}{g_i,g_{i+1}}$, and
  \begin{equation*}
  r\ge \left\lceil\max_{1\le i\le k}\!\left\{ \frac{\maxset{y}{J}}{v_i} \right\}\right\rceil.
  \end{equation*}
  Then, for all
  $\ell\ge 0$,
  \begin{equation*}
    \sum_{i=1}^{k}(g_i, g_{i+1})^{r+1+\ell}J
    = \Cpart_{0}\odot
    \bigodot_{i=1}^{k}\left(\Hpart_{i}^{\odot\ell} \odot \combpart{i} \right)\!,
  \end{equation*}
  where, for $1\le i<k$, $\Apart_{i}$, $\Hpart_{i}$, $\Bpart_{i}$ are
  the $r$-segments of $(g_i,g_{i+1})J$,
  \begin{equation*}
    \combpart{i} \coloneqq \Bpart_{i}\cdot y^{(r+1)v_{i+1} - \maxsetdeg{y}{\Bpart_{i+1}}} + \Apart_{i+1}\cdot x^{(r+1)u_{i}-\maxsetdeg{x}{\Apart_{i}}},
  \end{equation*}
  $\combpart{0}\coloneqq \Apart_1$, and $\combpart{k}\coloneqq \Bpart_k$.
\end{theorem}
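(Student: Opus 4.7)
The plan is to derive the theorem from Corollary~\ref{corollary:one-segment-link} by handling each summand $(g_i,g_{i+1})^{r+1+\ell}J$ separately and then showing that the resulting segment expressions fit together into the claimed $k$-fold link. For each $1\le i\le k$, the choice of $r$ ensures $r\ge\lceil\maxset{y}{J}/v_i\rceil$, so Corollary~\ref{corollary:one-segment-link} applied to $\red{(g_i,g_{i+1})}=(x^{u_i},y^{v_i})$, together with multiplication by $\gcd(g_i,g_{i+1})^{r+1+\ell}$, yields
\begin{equation*}
(g_i,g_{i+1})^{r+1+\ell}J = \gcd(g_i,g_{i+1})^{r+1+\ell}\cdot\bigl(\Apart_i\odot\Hpart_i^{\odot\ell}\odot\Bpart_i\bigr).
\end{equation*}
Using Remark~\ref{remark:count-circ-generators}, I would unfold this triple link into an explicit list of minimal generators, each being a monomial multiple of an element of $\mingens{\Apart_i}$, $\mingens{\Hpart_i}$, or $\mingens{\Bpart_i}$.

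Summing over $i$, I would sort the resulting generators according to which vertical slab between persistent generators $g_i^{r+1+\ell}$ and $g_{i+1}^{r+1+\ell}$ they occupy. Remark~\ref{remark:size-alpha-beta} provides the key bounds $\alpha_i\le(r+1)u_i$ and $\beta_i<(r+1)v_i$ on the link-point coordinates of $\Apart_i\odot\Bpart_i$, which I would use to confirm that the $\Apart_i$-piece and $\Bpart_i$-piece of segment $i$ sit entirely in this slab. Consequently the $\Hpart_i^{\odot\ell}$-pieces of different segments never interact, and the minimal generators of the left-hand side split cleanly into four groups: the $\Apart_1$-piece (leftmost); the $\Hpart_i^{\odot\ell}$ middle piece in each segment; the boundary pieces where $\Bpart_i$ and $\Apart_{i+1}$ meet for $1\le i<k$; and the $\Bpart_k$-piece (rightmost).

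These four groups correspond exactly to $\combpart{0}=\Apart_1$, the $\Hpart_i^{\odot\ell}$ factors, the $\combpart{i}$ for $1\le i<k$, and $\combpart{k}=\Bpart_k$ on the right-hand side. The main obstacle is showing that the boundary pieces combine correctly into $\combpart{i}$: one must verify that the shifts $y^{(r+1)v_{i+1}-\maxset{y}{\Bpart_{i+1}}}$ and $x^{(r+1)u_i-\maxset{x}{\Apart_i}}$ appearing in its definition are precisely the offsets needed to align the shifted $\Bpart_i$, which emerges from the expansion of segment $i$, with the shifted $\Apart_{i+1}$, which emerges from the expansion of segment $i+1$, at a common link point sitting at the corner $g_{i+1}^{r+1+\ell}$ of the big staircase. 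Once this alignment is verified by direct computation of the link-point monomials using Definition~\ref{definition:stable-components}, invoking Remark~\ref{remark:count-circ-generators} once more to enumerate the minimal generators of the $k$-fold link on the right-hand side yields equality of minimal generating sets, and hence of the two ideals.
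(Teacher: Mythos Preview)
Your plan is sound but organized differently from the paper's. The paper proceeds by \emph{induction on $k$}: the base case $k=1$ is Corollary~\ref{corollary:one-segment-link}, and in the inductive step one writes the sum as the already-linked expression for the first $k-1$ segments (after dividing out $y^{v_k}$ to anchor) plus the single last segment $(g_k,g_{k+1})^{r+1+\ell}J$, then manipulates this two-term ideal sum purely algebraically---never descending to minimal generators---until it is recognized as $L\odot\Cpart_{k-1}\odot R$. Your direct approach, treating all $k$ segments at once and matching minimal generating sets, is conceptually more transparent but obliges you to verify that no generator from segment $i$ is divisible by one from segment $j$ when $|i-j|\ge 2$, and that adjacent segments interact only through the $\Bpart_i$/$\Apart_{i+1}$ overlap; the paper's induction sidesteps this entirely by gluing only two pieces at a time. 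Both routes hinge on the same shift computation you flag as the ``main obstacle.'' One caution: your phrase ``align at a common link point'' is slightly misleading, since $\Cpart_i$ is defined as an ordinary \emph{sum} of shifted ideals rather than a link, and genuine cancellations between the $\Bpart_i$- and $\Apart_{i+1}$-pieces can occur (cf.\ Figure~\ref{figure:concatenation-component}). This does not break your argument, because the definition of $\Cpart_i$ already absorbs those cancellations, but it means that at the boundary you should compare ideals rather than generator lists---which is, in effect, what the paper does throughout.
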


\begin{figure}[h]
  \centering
    \begin{minipage}[h]{0.24\textwidth}
    \centering
    \fbox{
      \begin{minipage}[h]{0.8\textwidth}
        \begin{tikzpicture}[baseline=(current bounding box.north)]
  \pgfmathsetmacro{\firstxa}{2}
  \pgfmathsetmacro{\firstyb}{6}
  \pgfmathsetmacro{\secondxa}{3}
  \pgfmathsetmacro{\secondyb}{4}

  \pgfmathsetmacro{\scalefactor}{0.12}
  \pgfmathsetmacro{\positionx}{-10}
  \pgfmathsetmacro{\positiony}{-10}
  \pgfmathsetmacro{\offset}{0.2}
  \renewcommand*{\Jstring}{0/10,2/7,3/5,5/4,7/2,9/0}%

  \coordinate (g) at ($\scalefactor*(3,5)$);
  \coordinate (h1)at ($\scalefactor*(0,10)$);
  \coordinate (h2)at ($\scalefactor*(2,7)$);
  \coordinate (h3)at ($\scalefactor*(9,0)$);
  \coordinate (h4)at ($\scalefactor*(7,2)$);
  \coordinate (h5)at ($\scalefactor*(5,4)$);

  \pgfmathsetmacro{\xa}{\firstxa}
  \pgfmathsetmacro{\xshift}{0}
  \pgfmathsetmacro{\yb}{\firstyb}
  \pgfmathsetmacro{\yshift}{0}
  \pgfmathsetmacro{\yshift}{\secondyb}
  \pgfmathsetmacro{\power}{2}

  \computeParameters[1]
  \drawStairs{0}
  \drawFullJ[2]{0.5}

  \draw[dotted, thick] ($\power*(g1) +(g1) - (g2)$) -- ($\power*(g2)$);

  \coordinate (g2p) at ($\power*(g2)$);
  \coordinate (g2pu) at ($\power*(g2)-(g2)+(g1)$);
  \coordinate (g2puu) at ($\power*(g2)-2*(g2)+2*(g1)$);
  \coordinate (g2puuu) at ($\power*(g2)-3*(g2)+3*(g1)$);
  \coordinate (g2puuuu) at ($\power*(g2)-4*(g2)+4*(g1)$);

  \coordinate (go) at ($(g2puu)+(g)$);
  \draw[PineGreen, very thick]
  let
  \p1 = (g2p),
  \p2 = (g2pu),
  \p3 = ($(\p1)+ (h3)$),
  \p4 = ($(\p1)+ (h4)$),
  \p5 = ($(\p1)+ (h5) $),
  \p6 = ($(\p1)+ (g)$),
  \p7 = ($(\p1)+ (h2) $),
  \p8 = ($(\p1)+ (h1) $),
  in
  (\p3) -- (\x3,\y4) -- (\p4) -- (\x4,\y5) -- (\p5)  --(\x5,\y6) --(\p6) --(\x6,\y7)--(\p7) --(\x7,\y8)--(\p8);

  \draw[PineGreen, very thick]
  let
  \p1 = (g2puu),
  \p2 = (g2pu),
  \p3 = ($(\p2)+ (h1) $),
  \p6 = (g2p),
  \p7 = ($(\p6)+ (h1) $),
  \p8 = ($(\p6)+ (h3) $),
  in
  (\p8)--(\x3, \y6) -- (\p3) -- (\x7,\y3) --(\p7);

  \node[circle, draw, fill=red, inner sep=1.5pt] at (g2p) {};

\end{tikzpicture}

      \end{minipage}
    }
    \fbox{
      \begin{minipage}[h]{0.8\textwidth}
        \begin{tikzpicture}[baseline=(current bounding box.north)]
    \pgfmathsetmacro{\firstxa}{2}
    \pgfmathsetmacro{\firstyb}{6}
    \pgfmathsetmacro{\secondxa}{3}
    \pgfmathsetmacro{\secondyb}{4}

    \pgfmathsetmacro{\scalefactor}{0.12}
    \pgfmathsetmacro{\positionx}{-10}
    \pgfmathsetmacro{\positiony}{-10}
    \pgfmathsetmacro{\offset}{0.2}
    \def\Jarray{(0,10),(2,7),(3,5),(5,4),(7,2),(9,0)}
    \renewcommand*{\Jstring}{0/10,2/7,3/5,5/4,7/2,9/0}%

    \coordinate (g) at ($\scalefactor*(3,5)$);
    \coordinate (h1)at ($\scalefactor*(0,10)$);
    \coordinate (h2)at ($\scalefactor*(2,7)$);
    \coordinate (h3)at ($\scalefactor*(9,0)$);
    \coordinate (h4)at ($\scalefactor*(7,2)$);
    \coordinate (h5)at ($\scalefactor*(5,4)$);

    \pgfmathsetmacro{\power}{2};

    \pgfmathsetmacro{\xa}{\secondxa}
    \pgfmathsetmacro{\xshift}{\firstxa}
    \pgfmathsetmacro{\yb}{\secondyb}
    \pgfmathsetmacro{\yshift}{0}

    \computeParameters[1]
    \drawStairs{0}
    \drawFullJ[2]{0.5}
    \draw[dotted, thick] ($\power*(g1)$) -- ($\power*(g2) + (g2) - (g1)$);

    \coordinate (g2p) at ($\power*(g1)$);
    \coordinate (g2pb)  at ($\power*(g1)-(g1)+(g2)$);
    \coordinate (g2pbb) at ($\power*(g1)-2*(g1)+2*(g2)$);

    \draw[blue, very thick,]
    let
    \p1 = (g2p),
    \p2 = (g2pb),
    \p3 = ($(\p1)+ (h1)$),
    \p4 = ($(\p1)+ (h2)$),
    \p5 = ($(\p1)+ (g) $),
    \p6 = ($(\p2)+ (h2)$),
    \p7 = ($(\p2)+ (g) $),
    in
    (\p3) -- (\x4,\y3) -- (\p4) -- (\x5,\y4) -- (\p5) --(\x6,\y5) --(\p6);

    \draw[blue, very thick]
    let
    \p1 = (g2pb),
    \p2 = (g2pbb),
    \p3 = ($(\p1)+ (h1)$),
    \p4 = ($(\p1)+ (h2)$),
    \p5 = ($(\p1)+ (g) $),
    \p6 = ($(\p2)+ (h2)$),
    \p7 = ($(\p2)+ (g) $),
    \p8 = (g2p),
    \p9 = ($(\p8)+ (h1)$),
    in
    (\p4) -- (\x5,\y4) -- (\p5) --(\x6,\y5) --(\p6) --(\x7,\y6)--(\p7) -- (\x8,\y7) -- (\p9);

    \node[circle, fill=red, draw, inner sep=1.5pt] at (g2p) {};

 \end{tikzpicture}

      \end{minipage}
    }
    \end{minipage}
    \begin{minipage}[h]{0.55\textwidth}
     \centering
     \fbox{
       \begin{tikzpicture}[baseline=(current bounding box.north)]
   \pgfmathsetmacro{\firstxa}{2}
   \pgfmathsetmacro{\firstyb}{6}
   \pgfmathsetmacro{\secondxa}{3}
   \pgfmathsetmacro{\secondyb}{4}

   \pgfmathsetmacro{\scalefactor}{0.15}
   \pgfmathsetmacro{\positionx}{-10}
   \pgfmathsetmacro{\positiony}{-10}
   \pgfmathsetmacro{\offset}{0.2}
   \renewcommand*{\Jstring}{0/10,2/7,3/5,5/4,7/2,9/0}%

   \coordinate (g) at ($\scalefactor*(3,5)$);
   \coordinate (h1)at ($\scalefactor*(0,10)$);
   \coordinate (h2)at ($\scalefactor*(2,7)$);
   \coordinate (h3)at ($\scalefactor*(9,0)$);
   \coordinate (h4)at ($\scalefactor*(7,2)$);
   \coordinate (h5)at ($\scalefactor*(5,4)$);

   \pgfmathsetmacro{\xa}{\firstxa}
   \pgfmathsetmacro{\xshift}{0}
   \pgfmathsetmacro{\yb}{\firstyb}
   \pgfmathsetmacro{\yshift}{0}
   \pgfmathsetmacro{\yshift}{\secondyb}
   \pgfmathsetmacro{\power}{1}

   \computeParameters[1]
   \drawStairs{0}
   \draw[very thick, dotted]
   let
   \p1 = ($\power*(g2) -(g2)+(g1)$),
   \p2 = ($(\p1) + \scalefactor*(-\xa,\yb)$),
   in
   (\p2)-- (\p1);

   \coordinate (g2pu) at ($\power*(g2)-(g2)+(g1)$);
   \coordinate (g2puu) at ($\power*(g2)-2*(g2)+2*(g1)$);
   \coordinate (g2puuu) at ($\power*(g2)-3*(g2)+3*(g1)$);
   \coordinate (g2puuuu) at ($\power*(g2)-4*(g2)+4*(g1)$);

   \coordinate (go) at ($(g2puu)+(g)$);

   \pgfmathsetmacro{\xa}{\secondxa}
   \pgfmathsetmacro{\xshift}{\firstxa}
   \pgfmathsetmacro{\yb}{\secondyb}
   \pgfmathsetmacro{\yshift}{0}

   \computeParameters[1]
   \drawStairs{0}
   \draw[very thick, dotted]
   let
   \p1 = ($\power*(g1) -(g1)+(g2)$),
   \p2 = ($(\p1) + \scalefactor*(\xa,-\yb)$),
   in
   (\p2)-- (\p1);

   \coordinate (g2m) at ($(g1)$);
   \coordinate (g3m) at ($(g2)$);
   \coordinate (g2p) at ($\power*(g1)$);
   \coordinate (g3p) at ($\power*(g2)$);
   \coordinate (g2pb)  at ($\power*(g1)-(g1)+(g2)$);
   \coordinate (g2pbb) at ($\power*(g1)-2*(g1)+2*(g2)$);
   \coordinate (g2pbbb) at ($\power*(g1)-3*(g1)+3*(g2)$);

   \coordinate (gu) at ($(g2pbb)+(g)$);

   \draw[blue, very thick,]
   let
   \p1 = (g2p),
   \p2 = (g2pb),
   \p3 = ($(\p1)+ (h1)$),
   \p4 = ($(\p1)+ (h2)$),
   \p5 = ($(\p1)+ (g) $),
   \p6 = ($(\p2)+ (h2)$),
   \p7 = ($(\p2)+ (g) $),
   in
   (\p3) -- (\x4,\y3) -- (\p4) -- (\x5,\y4) -- (\p5) --(\x6,\y5) --(\p6);

   \draw[blue, very thick]
   let
   \p1 = (g2pb),
   \p2 = (g2pbb),
   \p3 = ($(\p1)+ (h1)$),
   \p4 = ($(\p1)+ (h2)$),
   \p5 = ($(\p1)+ (g) $),
   \p6 = ($(\p2)+ (h2)$),
   \p7 = ($(\p2)+ (g) $),
   \p8 = (g2p),
   \p9 = ($(\p8)+ (h1)$),
   in
   (\p4) -- (\x5,\y4) -- (\p5) --(\x6,\y5) --(\p6) --(\x7,\y6)--(\p7) -- (\x8,\y7) -- (\p9);

   \fill[blue, very thick, fill,fill opacity=0.1]
   let
   \p1 = (g2p),
   \p2 = (g2pb),
   \p3 = ($(\p1)+ (h1)$),
   \p4 = ($(\p1)+ (h2)$),
   \p5 = ($(\p1)+ (g) $),
   \p6 = ($(\p2)+ (h2)$),
   \p7 = ($(\p2)+ (g) $),
   in
   (\p3) -- (\x4,\y3) -- (\p4) -- (\x5,\y4) -- (\p5) --(\x6,\y5) --(\p6) -- (\x3,\y6);

   \fill[blue, very thick, fill,  fill opacity=0.1]
   let
   \p1 = (g2pb),
   \p2 = (g2pbb),
   \p3 = ($(\p1)+ (h1)$),
   \p4 = ($(\p1)+ (h2)$),
   \p5 = ($(\p1)+ (g) $),
   \p6 = ($(\p2)+ (h2)$),
   \p7 = ($(\p2)+ (g) $),
   \p8 = (g2p),
   \p9 = ($(\p8)+ (h1)$),
   in
   (\p4) -- (\x5,\y4) -- (\p5) --(\x6,\y5) --(\p6) --(\x7,\y6)--(\p7) -- (\x8,\y7) -- (\p9) -- (\x9,\y4);

   \draw[PineGreen, very thick]
   let
   \p1 = (g2p),
   \p2 = (g2pu),
   \p3 = ($(\p1)+ (h3)$),
   \p4 = ($(\p1)+ (h4)$),
   \p5 = ($(\p1)+ (h5) $),
   \p6 = ($(\p1)+ (g)$),
   \p7 = ($(\p1)+ (h2) $),
   \p8 = ($(\p1)+ (h1) $),
   in
   (\p3) -- (\x3,\y4) -- (\p4) -- (\x4,\y5) -- (\p5)  --(\x5,\y6) --(\p6) --(\x6,\y7)--(\p7) --(\x7,\y8)--(\p8);

   \draw[PineGreen, very thick]
   let
   \p1 = (g2puu),
   \p2 = (g2pu),
   \p3 = ($(\p2)+ (h1) $),
   \p6 = (g2p),
   \p7 = ($(\p6)+ (h1) $),
   \p8 = ($(\p6)+ (h3) $),
   in
   (\p8)--(\x3, \y6) -- (\p3) -- (\x7,\y3) --(\p7);

   \fill[PineGreen,  pattern color=PineGreen, pattern=dots ]
   let
   \p1 = (g2p),
   \p2 = (g2pu),
   \p3 = ($(\p1)+ (h3)$),
   \p4 = ($(\p1)+ (h4)$),
   \p5 = ($(\p1)+ (h5) $),
   \p6 = ($(\p1)+ (g)$),
   \p7 = ($(\p1)+ (h2) $),
   \p8 = ($(\p1)+ (h1) $),
   in
   (\p3) -- (\x3,\y4) -- (\p4) -- (\x4,\y5) -- (\p5)  --(\x5,\y6) --(\p6) --(\x6,\y7)--(\p7) --(\x7,\y8)--(\p8) -- (\x8,\y3);

   \fill[PineGreen, pattern color=PineGreen, pattern=dots]
   let
   \p1 = (g2puu),
   \p2 = (g2pu),
   \p3 = ($(\p2)+ (h1) $),
   \p6 = (g2p),
   \p7 = ($(\p6)+ (h1) $),
   \p8 = ($(\p6)+ (h3) $),
   in
   (\p8)--(\x3, \y6) -- (\p3) -- (\x7,\y3) --(\p7) -- (\x7,\y8);

  \draw [decorate, blue, thick, decoration = {brace}]
  let
  \p1 = (0.3,0),
  \p2 = ($(g2p) + (\p1)$),
  \p3 = ($(g2pbb) + (g) + (\p1)$),
  in
  (\x3,\y2) -- node[black,right, scale=0.7, xshift=2pt] { $(r+1)v_{i+1}-\maxsetdeg{y}{\Bpart_{i+1}}$}  (\p3);

   \draw [PineGreen, thick, dashed]
   let
   \p1 = (g2p),
   \p2 = (xend),
   \p3 = ($(g2pbb)+(g)$),
   \p4 = ($(g2pu) + (h1)$),
   in
   (\x4,\y1)  -- (\x4,\y3);

   \draw [PineGreen, thick, dashed]
   let
   \p1 = (g2p),
   \p2 = (xend),
   \p4 = ($(g2p) + (h3)$),
   in
   (\p4)  -- (\x2,\y4);

  \draw [decorate, PineGreen, thick, decoration = {brace, mirror}]
  let
  \p1 = (0,-0.1),
  \p2 = ($(g2p) + (\p1)$),
  \p3 = ($(g2pbb) + (g) + (\p1)$),
  \p4 = ($(g2pu) + (h1) + (\p1)$),
  in
  (\x4,\y3) -- node[black, below, scale=0.7, xshift=-15pt, yshift=-2pt] { $(r+1)u_{i}-\maxsetdeg{x}{\Apart_{i}}$}  (\x2,\y3);

  \node[circle, draw, fill=red, inner sep=2pt] at (g2p) {};
  \draw[-latex, shorten >=4pt, red, thick]
  ($(g2p) - (0.8,0.4)$) node[xshift=6pt,,label={[scale=0.7,text=black]left:$g_{i+1}^{r+1+\ell}$}]{} -- (g2p);

\end{tikzpicture}

     }
  \end{minipage}
  \caption{Left: The lower section of $(g_{i},g_{i+1})^{r+1+\ell}J$
    and the upper section of $(g_{i+1},g_{i+2})^{r+1+\ell}J$, where
    $g_{i+1}^{r+1+\ell}$ appears as the lowest and uppermost (red)
    dot, respectively. Right: $\Bpart_{i}$ (dotted green) and
    $\Apart_{i+1}$ (shaded blue) overlap at $g_{i+1}^{r+1+\ell}$
    resulting in a new staircase, namely that of $\combpart{i}$. Note
    the required shifts in $x$- and $y$-direction to align
    $\Bpart_{i}$ and $\Apart_{i+1}$ before summing them up.}
  \label{figure:concatenation-component}
\end{figure}

\begin{proof}
  Throughout, we use the notation
  $\alpha_i \coloneqq \maxsetdeg{x}{\Apart_i}$ and
  $\beta_i \coloneqq \maxsetdeg{y}{\Bpart_i}$. By
  Remark~\ref{remark:size-alpha-beta},
  \begin{equation}\label{eq:concatenation-shifts}
    (r+1)u_{i}-\alpha_{i} \ge 0 \quad\text{and}\quad (r+1)v_{i+1}-\beta_{i+1} > 0.
  \end{equation}
  We argue that for $1\le i\le k-1$
  \begin{equation}\label{eq:dxC}
    \maxsetdeg{x}{\combpart{i}}
    =  \alpha_{i+1}  + u_{i}(r+1) - \alpha_i
  \end{equation}
  and
  \begin{equation}\label{eq:dyC}
    \maxsetdeg{y}{\combpart{i}}
    =  \beta_{i}   + v_{i+1}(r+1) -  \beta_{i+1}.
  \end{equation}

  \eqref{eq:dxC}: Both ideals $\Apart_{i+1}$ and $\Bpart_{i}$ are
  anchored. Note that $\Apart_{i+1}\cdot x^{(r+1)u_{i}-\alpha_{i}}$
  contains a minimal generator of $y$-degree equal $0$ whereas all
  elements in $\Bpart_{i}\cdot y^{(r+1)v_{i+1} - \beta_{i+1}}$ have
  positive $y$-degree. It follows that
  \begin{equation*}
    \maxsetdeg{x}{\combpart{i}}
    = \maxsetdeg{x}{\Apart_{i+1}\cdot x^{(r+1)u_{i}-\alpha_{i}} }
    =  \alpha_{i+1}  + (r+1)u_{i}-\alpha_{i}.
  \end{equation*}

  \eqref{eq:dyC}: If $(r+1)u_{i}-\alpha_{i} > 0$, then we can argue
  analogously to the above.  If $(r+1)u_{i}-\alpha_{i} = 0$, then
  Remark~\ref{remark:size-alpha-beta} implies that
  $rv_i = \maxset{y}{J}$ and
  $\beta_i = \maxsetdeg{y}{\Bpart_i} = \maxset{y}{J}$.  Since
  $\dist_y\Cpart_i$ is given by the maximum of the $y$-degrees of the
  two summands of $\Cpart_i$, i.e.,
  \begin{align*}
    \dist_y\Cpart_i = \max\{\beta_{i} +(r+1)v_{i+1} - \beta_{i+1}, \dist_y\Apart_{i+1}\},
  \end{align*}
  and, by Remark~\ref{remark:relation-ABH-LRM},
  $\dist_y\Apart_{i+1}=(r+1)v_{i+1}+ \maxset{y}{J}-\beta_{i+1}$, the
  assertion follows.

  We turn our attention to the assertion of the theorem and proceed by
  induction on $k$. The base case $k=1$ is
  Corollary~\ref{corollary:one-segment-link}.

  Now let $k>1$.  By the assumption that $(g_1,\dots, g_{k+1})$ is
  anchored, we have $\gcd(g_1,\dots, g_k)=y^{\deg_yg_k}=y^{v_k}$.
  Using this, we apply the induction hypothesis to the anchored ideal
  $(g_1/y^{v_k}, \dots, g_k/y^{v_k})$ to conclude that
  \begin{equation*}
    \sum_{i=1}^{k-1}\left((g_i,g_{i+1})^{r+1+\ell}J\right)
    = \left(L  \odot \Bpart_{k-1}\right)y^{(r+1+\ell)v_k}
  \end{equation*}
  where
  \begin{equation*}
    L \coloneqq \Cpart_{0}\odot
    \bigodot_{i=1}^{k-2}\left(\Hpart_{i}^{\odot\ell} \odot \combpart{i} \right)
    \odot \Hpart_{k-1}^{\odot\ell}.
  \end{equation*}
  Similarly, setting $u =\deg_x g_k$, we know that
  \begin{equation*}
    (g_k,g_{k+1})^{r+1+\ell}J = \left(\Apart_k\odot R\right)\cdot x^{u(r+1+\ell)}
    \quad\text{with}\quad R \coloneqq \Hpart_k^{\odot \ell}\odot \Cpart_k.
  \end{equation*}

  Therefore,
  \begin{equation}\label{proof:glueing-1}
    \sum_{i=1}^{k}\left((g_i, g_{i+1})^{r+1+\ell}J\right)
    = (L \odot \Bpart_{k-1})y^{v_k(r+1+\ell)} + \left(\Apart_k\odot R\right)\cdot x^{u(r+1+\ell)},
  \end{equation}
  which is equal to
  \begin{equation}\label{proof:glueing-2}
    \left(Ly^{\beta_{k-1}} +  \Bpart_{k-1}x^{\maxset{x}{L}}  \right)\cdot y^{v_k(r+1+\ell)}
    + \left(\Apart_ky^{\maxset{y}{R}} + Rx^{\alpha_k}\right)\cdot x^{u(r+1+\ell)}.
  \end{equation}

  Before we continue manipulating~\eqref{proof:glueing-2}, we verify a
  few handy equations. Note that $\maxsetdeg{x}{\Hpart_i} = u_i$ and
  $\maxsetdeg{y}{\Hpart_i} = v_i$ for $1\le i \le k$, and recall that
  $u = \deg_xg_k = \sum_{i=1}^{k-1}u_i$.  Thus, using~\eqref{eq:dxC},
  \begin{align*}
    \maxset{x}{L}
    &= \alpha_1 + \ell\sum_{i=1}^{k-1}u_i + \sum_{i=1}^{k-2}\maxsetdeg{x}{\combpart{i}}
      = \alpha_{k-1} + \ell u + (r+1)\sum_{i=1}^{k-2}u_i \\
    &= \alpha_{k-1} + (r+1+\ell) u - (r+1)u_{k-1}.
  \end{align*}
  It follows that
  \begin{equation}
    \label{eq:1}
    \maxsetdeg{x}{L\odot \combpart{k-1}}
    = \maxset{x}{L} + \maxsetdeg{x}{\combpart{k-1}}
    = \alpha_{k} + (\ell+r+1) u
  \end{equation}
  and
  \begin{equation}\label{eq:2}
    u(r+1+\ell)-\maxset{x}{L}
    = (r+1)u_{k-1}-\alpha_{k-1} \ge 0,
  \end{equation}
  where the last inequality comes
  from~\eqref{eq:concatenation-shifts}. Moreover, since
  $\maxset{y}{R} = \beta_k + \ell v_k$ we have
  (using~\eqref{eq:concatenation-shifts} again)
  \begin{equation}\label{eq:3}
    v_k(r+1+\ell)-\maxset{y}{R} =  (r+1)v_k - \beta_k >0
  \end{equation}
  and, using~\eqref{eq:dyC},
  \begin{equation}\label{eq:4}
    \maxsetdeg{y}{\combpart{k-1}\odot R}
    =  \maxsetdeg{y}{\combpart{k-1}} +  \maxset{y}{R}
    = \beta_{k-1} + (\ell+r+1) v_k.
  \end{equation}
  Using~\eqref{eq:2} and~\eqref{eq:3}, the middle two summands
  of~\eqref{proof:glueing-2} can be rearranged to
  \begin{equation*}
    \left(\Bpart_{k-1} y^{(r+1)v_k - \beta_k}
      + \Apart_kx^{(r+1)u_{k-1}-\alpha_{k-1}}\right)x^{\maxset{x}{L}}y^{\maxset{y}{R}}.
  \end{equation*}
  Thus, using~\eqref{eq:1} and \eqref{eq:4}, we conclude
  that~\eqref{proof:glueing-2} equals
  \begin{equation*}
    Ly^{\maxsetdeg{y}{\combpart{k-1}\odot R}}
    + \combpart{k-1}x^{\maxset{x}{L}}y^{\maxset{y}{R}}
    +  Rx^{\maxsetdeg{x}{L\odot \combpart{k-1}}}
    = L \odot \combpart{k-1} \odot R
  \end{equation*}
  which completes the proof.
\end{proof}

\begin{corollary}
  With the assumptions and notation from
  Theorem~\ref{theorem:gluing-more-segments}, we have
  \begin{align*}
    \mu\!\left(\sum_{i=1}^{k}(g_i, g_{i+1})^{r+\ell+1}J\right)
    =1+ \sum_{i=0}^{k}(\mu(\combpart{i})-1)
    + \ell\sum_{i=1}^{k}\left(\mu(\Hpart_{i})-1\right)\!.
  \end{align*}
\end{corollary}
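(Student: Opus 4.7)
The proof is essentially a bookkeeping exercise on top of Theorem~\ref{theorem:gluing-more-segments}. The plan is to apply that theorem to rewrite the sum
$$\sum_{i=1}^{k}(g_i, g_{i+1})^{r+\ell+1}J
= \Cpart_{0}\odot \bigodot_{i=1}^{k}\bigl(\Hpart_{i}^{\odot\ell} \odot \combpart{i} \bigr),$$
and then count minimal generators on the right-hand side using the link formula from Remark~\ref{remark:count-circ-generators}.

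The first step is to establish, by induction on the number of factors, that for anchored monomial ideals $I_1,\dots,I_m$,
$$\mu(I_1 \odot \cdots \odot I_m) = \sum_{j=1}^{m}\mu(I_j) - (m-1).$$
The base case is trivial and the inductive step follows directly from $\mu(I \odot J) = \mu(I) + \mu(J) - 1$. The right-hand side of the theorem is a link of $1 + k(\ell+1)$ ideals, namely one copy of $\Cpart_0$, together with, for each $i = 1, \dots, k$, exactly $\ell$ copies of $\Hpart_i$ followed by one copy of $\combpart{i}$.

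Plugging these counts into the formula gives
$$\mu\!\left(\sum_{i=1}^{k}(g_i,g_{i+1})^{r+\ell+1}J\right)
= \mu(\Cpart_0) + \sum_{i=1}^{k}\bigl(\ell\,\mu(\Hpart_i) + \mu(\combpart{i})\bigr) - k(\ell+1).$$
Rearranging the constant $-k(\ell+1) = -(k+1) + 1 - k\ell$ and regrouping the $\mu(\combpart{i})$ terms with $-1$'s and the $\mu(\Hpart_i)$ terms with $-\ell$'s yields exactly
$$1 + \sum_{i=0}^{k}(\mu(\combpart{i}) - 1) + \ell\sum_{i=1}^{k}(\mu(\Hpart_i) - 1),$$
which is the claimed identity.

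The only subtlety worth checking is that the factors appearing in the link are genuinely anchored monomial ideals, so that Remark~\ref{remark:count-circ-generators} applies with the correct count $\mu(I\odot J) = \mu(I)+\mu(J)-1$; for the $\Hpart_i$ and $\Apart_i$, $\Bpart_i$ this is built into Definition~\ref{definition:stable-components}, and for the glued components $\combpart{i}$ it follows from the definition together with inequalities~\eqref{eq:concatenation-shifts} used in the proof of Theorem~\ref{theorem:gluing-more-segments}. Beyond this, there is no real obstacle; the whole argument reduces to the telescoping bookkeeping above.
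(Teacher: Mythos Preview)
Your argument is correct and is exactly the approach the paper takes: the paper's proof is the single sentence ``This is an immediate consequence of Theorem~\ref{theorem:gluing-more-segments} in combination with Remark~\ref{remark:count-circ-generators},'' and you have simply spelled out the bookkeeping behind that sentence. Your extra paragraph about the factors being anchored is harmless but unnecessary, since Remark~\ref{remark:count-circ-generators} already applies to arbitrary monomial ideals (the link is defined via the anchored reductions $\red{I}$, $\red{J}$) and the link $I\odot J$ is itself anchored, so the induction goes through without any additional hypothesis.
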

\begin{proof}
  This is an immediate consequence of
  Theorem~\ref{theorem:gluing-more-segments} in combination with
  Remark~\ref{remark:count-circ-generators}.
\end{proof}

\begin{remark}\label{remark:Ci}
  We argue that it is \textbf{not} required to know the $r$-segments
  of the individual summands $(g_i, g_{i+1})^{r+1}J$ to compute
  $\combpart{i}$ and $\Hpart_i$ in
  Theorem~\ref{theorem:gluing-more-segments}.
  \begin{enumerate}
  \item For $\combpart{i}$: The link points $h_1$, $\dots$, $h_k$ of
    \begin{equation*}
      S\coloneqq \sum_{i=1}^{k}(g_i, g_{i+1})^{r+1}J =
      \bigodot_{i=0}^{k}\combpart{i}
    \end{equation*}
    can be determined directly from $\mingens{S}$ by their $y$-degree,
    that is,
    \begin{equation*}
      \deg_yh_i=\min\!\left\{\deg_y f\longmid f\in\mingens[l]{S}, \deg_yf\ge rv_i+(r+1)\deg_yg_{i+1}\right\}\!,
    \end{equation*}
    for $1\le i\le k$.
    Hence, we use Remark~\ref{remark:link-to-sum}, set
    $h_0\coloneqq g_1^{r+1}$ and $h_{k+1}\coloneqq g_{k+1}^{r+1}$, and
    conclude that
    \begin{align*}
      \Cpart_i
      &= S:\gcd(h_i, h_{i+1}) \\
      &=\red{\left(g\in\mingens{S}\mid \deg_yh_{i+1}\le\deg_yg\le\deg_yh_i\right)}.
    \end{align*}

  \item For $\Hpart_{i}$: By Remark~\ref{remark:relation-ABH-LRM} and
    Theorem~\ref{theorem:generators-one-segment}, we have
    \begin{equation*}
      \Hpart_i = \left( (g_i, g_{i+1})^{r+1} J\right): \frac{h_i}{x^{u_i}}.
    \end{equation*}
    We can immediately conclude that
    \begin{equation*}
    	S:\frac{h_1}{x^{u_1}}=\left( (g_1, g_{2})^{r+1} J\right): \frac{h_1}{x^{u_1}}=\Hpart_1.
    \end{equation*}
    Further, we know by Remark~\ref{remark:size-alpha-beta}, that
    $\frac{h_i}{x^{u_i}}\mingens{\Hpart_i}$ is contained in
    \begin{equation*}
      T \coloneqq \{g\in \K[x,y] \mid \deg_y h_{i} \le \deg_y g \le \deg_y h_{i} + v_i \}.
    \end{equation*}
    A straight-forward verification shows that for $2\le i\le k$,
    \begin{equation*}
      \gcd(h_{i-1}, h_i)\Bpart_{i-1}\cdot y^{(r+1)v_{i}-\maxsetdeg{y}{\Bpart_{i}}} \cap T
      =
      g_{i}^{r+1}J \cap T.
    \end{equation*}
    This further implies that
    $S\cap T = (g_{i}, g_{i+1})^{r+1}J \cap T$ and thus,
    \begin{equation*}
      \Hpart_i = S: \frac{h_i}{x^{u_i}}.
    \end{equation*}
\end{enumerate}
\end{remark}

\newcommand{\rx}{r_{x}(P,D)}
\newcommand{\ry}{r_{y}(P,D)}
\newcommand{\rdot}{r_{\xy}(P,D)}
\newcommand{\hx}[1]{h^x_{#1}}
\newcommand{\hy}[1]{h^y_{#1}}
\newcommand{\hdot}[1]{h^{(y)}_{#1}}

\section{Minimal generating sets of powers}\label{section:min-gens}
We now apply the preceding results to describe the minimal generators
of large powers of an ideal~$I$.

\begin{convention}\label{convention:section5}
  Throughout, let $I$ be an anchored monomial ideal and
  $P=\{g_1,\dots, g_{k+1}\}$ such that
  $P(I)\subseteq P \subseteq P^{\ast}(I)$ and the $g_i$ are ordered in
  descending $y$-degree.  Further, with $\delta_P$ as in
  Notation~\ref{notation:delta}, and $d_P$ as in
  Notation~\ref{notation:dI}, let
  $D\ge D_P\coloneqq (\mu(I)-|P|)\cdot \delta_P + |P|\cdot d_P$, set
  \begin{equation*}
    \ry \coloneqq \left\lceil D\cdot \max_{1\le i\le k}
      \frac{\maxset{y}{I}}{\maxsetdeg{y}{g_i,g_{i+1}}}\right\rceil,
  \end{equation*}
  and let $s\ge D+\ry+1$.
\end{convention}

\begin{remark}
  We assume that $I$ is anchored for better readability. Note that
  $I^n=\gcd(I)^n\cdot \red{I}^n$ holds for all $n\ge 1$.
\end{remark}

We first use Theorem~\ref{theorem:I^D+l} to establish that
for all $\ell\ge 0$
\begin{equation*}
  I^{D+\ell}=\sum_{i=1}^{k}(g_i,g_{i+1})^{\ell}I^D
\end{equation*}
holds.  Note that the weakly persistent generators of the ideal
generated by $P$ are precisely the elements of $P$.  Further, we
observe that $\maxsetdeg{y}{I^D}=D\maxset{y}{I}$ and hence
\begin{equation*}
  r \coloneqq s-D-1\ge \ry= \left\lceil\max_{1\le i\le k}\frac{\maxsetdeg{y}{I^D}}{\maxsetdeg{y}{g_{i},g_{i+1}}}\right\rceil\!.
\end{equation*}
Therefore, the conditions for
Theorem~\ref{theorem:gluing-more-segments} with $J=I^D$ are satisfied
and we obtain that for all $\ell\ge0$
\begin{align*}
  I^{s+\ell} = \sum_{i=1}^{k}(g_i,g_{i+1})^{r+1+\ell} I^D = \Cpart_{0}\odot
  \bigodot_{i=1}^{k}\left(\Hpart_{i}{}^{\odot\ell} \odot \Cpart_i \right)\!,
\end{align*}
where $\Hpart_i$ and $\Cpart_i$ are as in
Theorem~\ref{theorem:gluing-more-segments}.

\begin{definition}\label{definition:s-stable-components}
  We call the sets $\Hpart_i$ and $\Cpart_i$ the \emph{$(s,y)$-stable
    components} of $I$ with respect to $P$ and $D$. Note that these
  components can be determined directly from $I^s$ without having to
  compute the individual $r$-segments, see Remark~\ref{remark:Ci} with
  $J=I^D$.
\end{definition}
The preceding arguments complete the proof of the main theorem of this
section, which we state here.
\begin{theorem}\label{theorem:I-with-k-persistent-stabilizes}
  We use the notation of Convention~\ref{convention:section5} and let
  $(\Cpart_{i})_{i=0}^k$ and $(\Hpart_i)_{i=1}^k$ be the
  $(s,y)$-stable components of $I$ with respect to~$P$ and~$D$.  Then
  for all $\ell \ge0$,
  \begin{equation*}
    I^{s+\ell}
    = \Cpart_0\ylink\bigylink_{i=1}^{k}\left(\Hpart_{i}^{\ylink\ell} \ylink \combpart{i} \right).
  \end{equation*}
  \qed
\end{theorem}
\begin{remark}
  The link points $h_1^{(y)}$, \dots, $h_k^{(y)}$ of
  $I^s=\bigylink_{i=0}^k\Cpart_i$ can be determined from the minimal
  generators of $I^s$, see Remark~\ref{remark:Ci}.  They are ordered
  in descending $y$-degree, see
  Figure~\ref{figure:s-stable-components}, and fulfill
  \begin{equation*}
    \deg_{y}g_{i+1}^s \le \deg_{y}h_i^{(y)} \le \deg_{y}g_{i}^{r+1}.
  \end{equation*}
  The lower bound is due to
  $r\ge \ry\ge\frac{D\dist_{y}I}{\dist_{y}(g_i,g_{i+1})}$ and the
  upper bound follows from Corollary~\ref{corollary:find-g}.
\end{remark}

\begin{remark}
  At the beginning of Section~\ref{section:staircase-factors} we chose
  to partition elements of ideals with regular staircase factors based
  on their $y$-degrees (Lemma~\ref{lemma:y-sections},
  Remark~\ref{remark:why-y-sections}). Consequently, definitions and
  results so far are phrased with assertions about the $y$-degree.
  Throughout we have notationally pointed out the dependence of this
  choice where needed, that is, in the link $\ylink$, in $\ry$, in the
  $(s,y)$-stable components, in the link points $h_i^{(y)}$, and in
  the order of the elements of $P$.  Switching the roles of the
  variables does not affect the values of $\delta_P$ and $d_P$.
\end{remark}

For the next few results (until including
Remark~\ref{remark:end-of-xy}) we take this option of a variable
switch into account.

\begin{figure}
  \begin{minipage}{0.45\textwidth}
	\centering
	\begin{tikzpicture}[scale=0.8]
	\draw[-{stealth}] (0,0) -- (5.2,0) node[anchor=west] {\scriptsize$x$};
	\draw[-{stealth}] (0,0) -- (0,5.5) node[anchor=east] {\scriptsize$y$};
	
	\coordinate (g4) at (0,5);
	\coordinate (h1) at (3,0.75);
	\coordinate (g3) at (0.5,3);
	\coordinate (g2) at (2.5,1);
	\coordinate (g1) at (4.5,0);
	\coordinate (h2) at (1.5,2);
	\coordinate (h3) at (0.25,4);
	

	\draw[thick] (g1) -- (g2) -- (g3) -- (g4);
	
	\coordinate (H1) at (3.8,1.2);
	\node[ForestGreen] at (H1) {$\Hpart_1$};
	\draw[ForestGreen,thick, -{stealth}] ($(H1.south) + (-0.2,-0.15)$) -> ($(H1.south east)+(-0.5,-0.4)$);
	
	\coordinate (H2) at (2.3,2.45);
	\node[ForestGreen] at (H2) {$\Hpart_2$};
	\draw[ForestGreen,thick, -{stealth}] ($(H2.south) + (-0.2,-0.15)$) -> ($(H2.south east)+(-0.5,-0.4)$);
	
	\coordinate (H3) at (0.9,4.45);
	\node[ForestGreen] at (H3) {$\Hpart_3$};
	\draw[ForestGreen,thick, -{stealth}] ($(H3.south) + (-0.2,-0.15)$) -> ($(H3.south east)+(-0.5,-0.4)$);

	
	\filldraw[draw = blue!60, fill = blue!60, opacity = 0.5] (h1) -- (4.5,0.75) -- (g1) -- (3,0) -- (h1);
	\filldraw[draw = ForestGreen, pattern=north east lines, very thick] (h1) -- (3.5,0.75) -- (3.5,0.5) -- (3,0.5) -- (h1);
	\node[blue!60] at (4.75,0.7) {$\Cpart_0$};

	\filldraw[draw = blue!60, fill = blue!60, opacity = 0.5] (h2) -- (3,2) -- (h1) -- (1.5,0.75) -- (h2);
	\filldraw[draw = ForestGreen, pattern=north east lines, very thick] (h2) -- (2,2) -- (2,1.5) -- (1.5,1.5) -- (h2);
	\node[blue!60] at (3.25,2) {$\Cpart_1$};

	\filldraw[draw = blue!60, fill = blue!60, opacity = 0.5] (h3) -- (1.5,4) -- (h2) -- (0.25,2) -- (h3);
	\filldraw[draw = ForestGreen, pattern=north east lines, very thick] (h3) -- (0.375,4) -- (0.375,3.5) -- (0.25,3.5) -- (h3);
	\node[blue!60] at (1.75,4) {$\Cpart_2$};
	
	\filldraw[draw = blue!60, fill = blue!60, opacity = 0.5] (g4) -- (0.25,5) -- (h3) -- (0,4) -- (g4);
	\node[blue!60] at (0.5,5) {$\Cpart_3$};
	
	
	\fill (g1) circle (2pt) node[below] {\scriptsize$g_1^s=\color{blue}{h_0}$};
	\fill[blue] (h1) circle (2pt) node[below left] {\scriptsize$h_1$};
	\fill[blue] (h2) circle (2pt) node[below left] {\scriptsize$h_2$};
	\fill[blue] (h3) circle (2pt) node[below left = 2pt, rectangle, fill = white, inner sep=0.9pt] {\scriptsize$h_3$};
	\fill (g2) circle (2pt) node[left] {\scriptsize$g_2^s$};
	\fill (g3) circle (2pt) node[below] {\scriptsize$g_3^s$};
	\fill (g4) circle (2pt) node[left] {\scriptsize$\color{blue}{h_4}\color{black}{=g_4^s}$};
	\end{tikzpicture}
\end{minipage}
\begin{minipage}{0.45\textwidth}
	\centering
	\begin{tikzpicture}[scale=0.8]
	\draw[-{stealth}] (0,0) -- (5.2,0) node[anchor=west] {\scriptsize$x$};
	\draw[-{stealth}] (0,0) -- (0,5.5) node[anchor=east] {\scriptsize$y$};
	
	\coordinate (g4) at (0,5);
	\coordinate (h1) at (3,0.75);
	\coordinate (g3) at (0.5,3);
	\coordinate (g2) at (2.5,1);
	\coordinate (g1) at (4.5,0);
	\coordinate (h2) at (1.5,2);
	\coordinate (h3) at (0.25,4);
	

	\draw[thick] (g1) -- (g2) -- (g3) -- (g4);
	
	\coordinate (H3) at (3.55,1.3);
	\node[ForestGreen] at (H3) {$\Hpart_3$};
	\draw[ForestGreen,thick, -{stealth}] ($(H3.south) + (-0.2,-0.15)$) -> ($(H3.south east)+(-0.5,-0.4)$);
	
	\coordinate (H2) at (2.05,2.6);
	\node[ForestGreen] at (H2) {$\Hpart_2$};
	\draw[ForestGreen,thick, -{stealth}] ($(H2.south) + (-0.2,-0.15)$) -> ($(H2.south east)+(-0.5,-0.4)$);
	
	\coordinate (H1) at (0.8,4.55);
	\node[ForestGreen] at (H1) {$\Hpart_1$};
	\draw[ForestGreen,thick, -{stealth}] ($(H1.south) + (-0.2,-0.15)$) -> ($(H1.south east)+(-0.5,-0.4)$);

	
	\filldraw[draw = blue!60, fill = blue!60, opacity = 0.5] (h1) -- (4.5,0.75) -- (g1) -- (3,0) -- (h1);
	\filldraw[draw = ForestGreen, pattern=north east lines, very thick] (h1) -- (3,0.95) -- (2.65,0.95) -- (2.65,0.75) -- (h1);
	\node[blue!60] at (4.75,0.7) {$\Cpart_3$};
	
	\filldraw[draw = blue!60, fill = blue!60, opacity = 0.5] (h2) -- (3,2) -- (h1) -- (1.5,0.75) -- (h2);
	\filldraw[draw = ForestGreen, pattern=north east lines, very thick] (h2) -- (1.5,2.5) -- (1,2.5) -- (1,2) -- (h2);
	\node[blue!60] at (3.25,2) {$\Cpart_2$};
	
	\filldraw[draw = blue!60, fill = blue!60, opacity = 0.5] (h3) -- (1.5,4) -- (h2) -- (0.25,2) -- (h3);
	\filldraw[draw = ForestGreen, pattern=north east lines, very thick] (h3) -- (0.25,4.5) -- (0.125,4.5) -- (0.125,4) -- (h3);
	\node[blue!60] at (1.75,4) {$\Cpart_1$};
	
	\filldraw[draw = blue!60, fill = blue!60, opacity = 0.5] (g4) -- (0.25,5) -- (h3) -- (0,4) -- (g4);
	\node[blue!60] at (0.5,5) {$\Cpart_0$};
	
	
	\fill (g1) circle (2pt) node[below] {\scriptsize$g_4^s=\color{blue}{h_4}$};
	\fill[blue] (h1) circle (2pt) node[below left] {\scriptsize$h_3$};
	\fill[blue] (h2) circle (2pt) node[below left] {\scriptsize$h_2$};
	\fill[blue] (h3) circle (2pt) node[below left = 2pt, rectangle, fill = white, inner sep=0.9pt] {\scriptsize$h_1$};
	\fill (g2) circle (2pt) node[left] {\scriptsize$g_3^s$};
	\fill (g3) circle (2pt) node[below] {\scriptsize$g_2^s$};
	\fill (g4) circle (2pt) node[left] {\scriptsize$\color{blue}{h_0}\color{black}{=g_1^s}$};
	\end{tikzpicture}
\end{minipage}
  \caption{Switching the roles of $x$ and $y$: A simplified
    visualisation of the $(s,\xy)$-stable components
    $(\Cpart_{i})_{i=0}^k$ and $(\Hpart_i)_{i=1}^k$ ($\xy=x$ on the
    left and $\xy=y$ on the right) of an ideal with $k=|P|-1=3$.}
  \label{figure:s-stable-components}
\end{figure}

\begin{notation}
  With the notation of
  Theorem~\ref{theorem:I-with-k-persistent-stabilizes}, let
  \begin{equation*}
    r(P,D) \coloneqq \min\{\rx, \ry\}.
  \end{equation*}
\end{notation}

\begin{remark}
  Theorem~\ref{theorem:I-with-k-persistent-stabilizes} states that all
  information about large powers of $I$ is encoded in $I^{s}$ for any
  $s\ge D+ r(P,D) + 1$.
\end{remark}

\begin{corollary}\label{corollary:runtime}
  For $s \ge D_P + r(P,D_P) + 1$ and $\ell \ge 0$, the computation of
  $\mingens{I^{s+\ell}}$ from $\mingens{I^{s}}$ takes
  $\mathcal{O}(\ell)$ additions of (monomial) exponents.
\end{corollary}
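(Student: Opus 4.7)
The plan is to derive the bound directly from Theorem~\ref{theorem:I-with-k-persistent-stabilizes} together with Remark~\ref{remark:count-circ-generators}. The theorem already expresses $I^{s+\ell}$ as an explicit iterated link of the $s$-stable components of $I$, while the remark tells us how to build the minimal generating set of $A \odot B$ from those of $A$ and $B$ at a cost of $\mu(A)+\mu(B)-1$ monomial shifts. No new combinatorics is required; the task is simply to count exponent additions.

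First, I would observe that the $s$-stable components $\Cpart_0, \dots, \Cpart_k$ and $\Hpart_1, \dots, \Hpart_k$ can be extracted from $\mingens{I^s}$ with work independent of $\ell$: the $\Cpart_i$ are identified as sub-collections of $\mingens{I^s}$ in Remark~\ref{remark:Ci}, and the $\Hpart_i$ are obtained by dividing a fixed monomial into a subset of $\mingens{I^s}$ via Definition~\ref{definition:s-stable-components}. Next, I would flatten the formula of Theorem~\ref{theorem:I-with-k-persistent-stabilizes} into a linear chain $J_1 \odot \cdots \odot J_m$ of length $m = 1 + k(\ell+1)$, whose factors are among the $\Cpart_i$ and $\Hpart_i$. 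By iterating Remark~\ref{remark:count-circ-generators}, every minimal generator of this chain has the form $q_j \cdot g$ with $g \in \mingens{J_j}$ and
\begin{equation*}
  q_j = x^{\sum_{i<j}\maxsetdeg{x}{J_i}} \, y^{\sum_{i>j}\maxsetdeg{y}{J_i}},
\end{equation*}
with each link point between consecutive components counted only once.

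The $x$-exponents of all the $q_j$ can then be computed by a single left-to-right cumulative sum, and the $y$-exponents by a right-to-left cumulative sum, costing $\mathcal{O}(m) = \mathcal{O}(\ell)$ additions. Emitting each individual generator $q_j\cdot g$ is one further exponent addition, and since every $\mu(J_j)$ is bounded by a constant depending only on $I$ and $s$, the total is $\mathcal{O}(\ell)$ additions. The outer factor $\gcd(I)^{s+\ell}$ contributes only a single fixed shift applied uniformly to all generators and is absorbed into the bound. The one mildly delicate point is ensuring that the link points shared by consecutive $J_j$ and $J_{j+1}$ are never counted twice; this is handled automatically by the $\mu(A\odot B) = \mu(A)+\mu(B)-1$ identity of Remark~\ref{remark:count-circ-generators}, applied iteratively along the chain.
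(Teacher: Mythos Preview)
Your proposal is correct and follows essentially the same approach the paper intends: the paper leaves this corollary without a formal proof, treating it as an immediate consequence of Theorem~\ref{theorem:I-with-k-persistent-stabilizes} together with Remark~\ref{remark:count-circ-generators} (and the observation in the introduction that each minimal generator of a link costs one exponent addition). Your write-up simply makes explicit what the paper leaves implicit---in particular, the cumulative-sum computation of the shift monomials $q_j$, which is precisely what is needed to avoid the naive $\mathcal{O}(\ell^2)$ cost of iterating the binary link operation one step at a time.
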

\begin{remark}
  Theorem~\ref{theorem:I-with-k-persistent-stabilizes} leads to
  significantly faster computations of large powers of $I$. For a
  runtime comparison we refer to Section~\ref{sec:runtime}.
\end{remark}

Recall that the Hilbert function of the fibre ring of a monomial
ideal---which counts the number of generators of its powers---eventually
becomes a polynomial function
(cf.~\cite[Theorem~6.1.3]{Herzog-Hibi:2011:monideals}).
Theorem~\ref{theorem:I-with-k-persistent-stabilizes} yields an explicit description of this
polynomial.
\begin{corollary}\label{corollary:mu}
  Let $I\subseteq\K[x,y]$ be a monomial ideal and
  $s\ge D+ r(P,D) + 1$.  Then for all $\ell\ge 0$,
  \begin{equation*}
    \mu(I^{s+\ell})=\mu(I^{s}) + \ell\left(\mu(I^{s+1})-\mu(I^{s})\right)\!.
  \end{equation*}
\end{corollary}

\begin{remark}\label{remark:bound-patter-stability-number}
  For $\xy \in\{x,y\}$ and $|P|>2$, we have
  $r_{\bullet}(P,D_P) \le D_P\dist_{\xy}(I)$ and
  $D_P\le \mu(I)(\dist_{\bullet} I-1)$. Thus
  \begin{align*}
    D_P+r_{\bullet}(P,D_P)+1\le \mu(I)\Big(\dist_{\xy}(I)^2-1\Big)+1.
  \end{align*}
  If $|P|=2$, then $r(P,D_P) = D_P$ and hence
  \begin{align*}
    D_P+ r(P,D_P) + 1 = 2(\mu(I) - 2)\Big(\min\{\dist_xI, \dist_yI\} - 1\Big) + 1.
  \end{align*}
\end{remark}

\begin{corollary}\label{corollary:I-with-2-persistent-stabilizes}
  Let $I\subseteq\K[x,y]$ be a monomial ideal with $P(I)=\{x^a,y^b\}$,
  $d\coloneqq\min\{a,b\}$, and $s\ge 2(\mu(I)-2)(d-1)+1$.  Then for
  all $\ell\ge 0$
  \begin{equation*}
    I^{s+\ell}=\Cpart_0\dotlink \mathsf{H}_1^{\dotlink\ell}\dotlink \mathsf{C}_1,
  \end{equation*}
  where $\Cpart_0$, $\Cpart_1$, $\Hpart_1$ are the $(s, \xy)$-stable
  components of $I$ with respect to $P=P(I)$ and $D=D_P$.
\end{corollary}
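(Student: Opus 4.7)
The plan is to derive the corollary as a direct specialization of Theorem~\ref{theorem:I-with-k-persistent-stabilizes}, so the only real work is to unpack the constants and verify that the bound $s \ge 2(\mu(I)-2)(d-1)+1$ matches the bound $s \ge D + \min\{r_x, r_y\} + 1$ from that theorem.

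First, I would observe that since $x^a$ and $y^b$ are coprime and both lie in $\mingens{I}$, the greatest common divisor $\gcd(I)$ must be $1$, so $I$ is anchored. Hence $\gcd(I)^{s+\ell} = 1$ and the prefactor in Theorem~\ref{theorem:I-with-k-persistent-stabilizes} disappears. Moreover, $|P(I)| = 2$ means $k+1 = 2$, i.e.\ $k = 1$, so the big link product in that theorem collapses to the single factor $\Hpart_1^{\odot\ell} \odot \Cpart_1$, yielding precisely the claimed decomposition $\Cpart_0 \odot \Hpart_1^{\odot \ell} \odot \Cpart_1$.

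Next I would verify the numerical bound. By Notation~\ref{notation:dI}, $|P(I)| = 2$ forces $d_I = 0$. The only adjacent pair of persistent generators is $(g_1, g_2) = (x^a, y^b)$, for which $\maxsetdeg{x}{g_1, g_2} = a$ and $\maxsetdeg{y}{g_1, g_2} = b$; hence by Notation~\ref{notation:delta},
\begin{equation*}
  \delta_I = \min\{a, b\} - 1 = d - 1.
\end{equation*}
Consequently $D = |N(I)|\cdot\delta_I + |P(I)|\cdot d_I = (\mu(I)-2)(d-1)$. Since $\maxset{x}{I} = a$ and $\maxset{y}{I} = b$ (because $x^a$ and $y^b$ are the unique minimal generators of maximal respective degrees), the maxima in the definitions of $r_x$ and $r_y$ degenerate and give $r_x = \lceil D\cdot a/a\rceil = D$ and $r_y = \lceil D\cdot b/b\rceil = D$. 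Therefore $D + \min\{r_x, r_y\} + 1 = 2D + 1 = 2(\mu(I)-2)(d-1)+1$, which is exactly the hypothesis on $s$.

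There is no real obstacle here; the whole argument is bookkeeping. The step requiring the most care is the computation of $\delta_I$ and $D$, where one has to remember to use $d_I = 0$ in the $|P(I)| = 2$ case of Notation~\ref{notation:dI} rather than $\min\{\maxset{x}{I}, \maxset{y}{I}\} - 2$. Once these constants are correctly identified, invoking Theorem~\ref{theorem:I-with-k-persistent-stabilizes} with $k = 1$ yields the stated identity.
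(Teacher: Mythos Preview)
Your proposal is correct and follows the same approach as the paper, which simply states that this is the special case $k=1$ of Theorem~\ref{theorem:I-with-k-persistent-stabilizes}. You are in fact more thorough: you explicitly verify that $I$ is anchored (so the $\gcd(I)^{s+\ell}$ prefactor vanishes) and you spell out the computation $D+\min\{r_x,r_y\}+1=2(\mu(I)-2)(d-1)+1$, which the paper relegates to the Remark immediately preceding the corollary.
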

\begin{proof}
  This is the special case of
  Theorem~\ref{theorem:I-with-k-persistent-stabilizes} with $k=1$.
\end{proof}
\begin{remark}\label{remark:end-of-xy}
  If $P=\{x^a,y^b\}$, then the $(s-D_P-1)$-segments of
  $(x^a,y^b)I^{D_P}$ (w.r.t.~$y$, see
  Definition~\ref{definition:stable-components}) coincide with the
  $(s,y)$-stable components of $I$ with respect to $P$, i.e.,
  \begin{equation*}
    \Apart = \Cpart_0,\quad \Hpart = \Hpart_1,\quad\text{ and }\quad \Bpart = \Cpart_1.
  \end{equation*}
\end{remark}

We now summarize how the minimal generators of $I^s$, $I^{s+1}$, and
$I^{s+2}$ can be explicitly described from the $(s,\xy)$-stable
components.

For better readability we switch back to stating the results with
respect to the variable $y$.

\begin{corollary}\label{corollary:mingens-shift}
  We use the notation of Convention~\ref{convention:section5}.
  Further, let $(\Cpart_i)_{i=0}^k$, $(\Hpart_i)_{i=1}^k$ be the
  $(s,y)$-stable components of $I$ with respect to $P$ and $D$, and
  $h_0$, \dots, $h_k$ be the link points of
  $I^s=\Cpart_{0}\ylink\cdots\ylink\Cpart_k$.  We set
  $q_i\coloneqq \gcd(h_i, h_{i+1})$ for $0\le i \le k$ and
  $\hat{q}_i \coloneqq \frac{h_i}{x^{\dist_x(g_i,g_{i+1})}}$.  Then
  \begin{equation*}
    \mingens[l]{I^{s}} = \mingens{q_0\Cpart_0} \uplus \biguplus_{i=1}^k \mingensast{q_i\Cpart_{i}},
  \end{equation*}
  \begin{equation*}
    \mingens[l]{I^{s+1}} = g_1\cdot \mingens{q_0\Cpart_0}
    \uplus \biguplus_{i=1}^k g_{i+1}\Big(\mingensast{q_i\Cpart_{i}}
    \uplus \mingensast{\hat{q}_i\Hpart_i}\Big),
  \end{equation*}
  and
  \begin{equation*}
    \mingens[l]{I^{s+2}} = g_1^2\mingens{q_0\Cpart_0}
    \uplus \biguplus_{i=1}^k g_{i+1}^2\Big(\mingensast{q_i\Cpart_{i}}
    \uplus \mingensast{\hat{q}_i\Hpart_i}\Big)
    \uplus \biguplus_{i=1}^k g_ig_{i+1}\mingensast{\hat{q}_i\Hpart_i},
  \end{equation*}
  where $\mingensast{J}$ denotes the minimal generating set of a
  monomial ideal $J$ excluding the minimal generator of maximal
  $y$-degree.
\end{corollary}
\begin{proof}
  The first assertion follows from
  Theorem~\ref{theorem:I-with-k-persistent-stabilizes} in combination
  with Remark~\ref{remark:link-to-sum}. Note that
  \begin{equation*}
    \dist_x(\Cpart_0 \odot \dots \odot \Cpart_{i-1}) = \deg_xh_i \quad \text{ and }\quad \dist_y(\Cpart_{i} \odot \dots \odot \Cpart_{k}) = \deg_yh_i.
  \end{equation*}

  Moreover, again by
  Theorem~\ref{theorem:I-with-k-persistent-stabilizes}, we have
  \begin{equation}\label{eq:Is+1}
    I^{s+1} = \Cpart_0\odot\Hpart_1 \odot \cdots\odot \Hpart_k \odot\Cpart_k.
  \end{equation}

  Let $u_i \coloneqq \dist_x\Hpart_i = \dist_x(g_i,g_{i+1})$ and
  $v_i \coloneqq \dist_y\Hpart_i = \dist_y(g_i,g_{i+1})$ for
  $1\le i \le k$.  With this notation,
  $\deg_xg_i= \sum_{j=1}^{i-1}{u_{j}}$ and
  $\deg_yg_i = \sum_{j=i}^{k}{v_j}$.

  For $1\le i \le k$, let $w_i$ be the link point of
  $(\Cpart_0\odot\Hpart_1 \odot \cdots\odot \Hpart_{i-1}
  \odot\Cpart_{i-1})$ and
  $(\Hpart_{i} \odot\Cpart_{i}\odot \cdots\odot \Hpart_{k}
  \odot\Cpart_{k})$. Then
  \begin{align*}
    \deg_xw_i &= \deg_xh_i + \sum_{j=1}^{i-1} u_j\quad \text{and}\\
    \deg_yw_i &= \deg_yh_i + \sum_{j=i}^{k} v_j,
  \end{align*}
  which implies $w_i = h_ig_{i}$.

  Similarly, for the link point $m_i$ of
  $(\Cpart_0\odot\Hpart_1 \odot \cdots \odot \Cpart_{i-1} \odot
  \Hpart_i)$ and
  $(\Cpart_{i}\odot \cdots\odot \Hpart_{k} \odot\Cpart_{k})$ we
  conclude that $m_i = h_{i}g_{i+1}$.

  The monomials $w_1$, $m_1$, $w_2$, \dots, $w_{k}$, $m_{k}$ are the
  link points of the link~\eqref{eq:Is+1} from left to right. With
  $m_0 \coloneqq h_0g_{1}$, $w_{k+1} \coloneqq h_{k+1}g_{k+1} $ and
  Remark~\ref{remark:link-to-sum}, we have
  \begin{equation*}
    I^{s+1} =
    \sum_{i=0}^k\gcd(m_i, w_{i+1})\Cpart_i + \sum_{i=1}^k\gcd(w_{i},m_{i})\Hpart_i.
  \end{equation*}

  A straightforward verification shows that for $1\le i \le k$
  \begin{equation*}
    \gcd(m_i, w_{i+1}) = g_{i+1}q_{i} \quad\text{ and }\quad\gcd(m_{i},w_{i}) = \gcd(g_i,g_{i+1})h_{i} = g_{i+1}\hat{q}_i.
  \end{equation*}

  Finally, it follows from the definition of the link that the
  individual summands intersect exactly at the linking points which
  are excluded in $\mingensast{\Hpart_i}$ and $\mingensast{\Cpart_i}$,
  making them pairwise disjoint. This proves the second assertion.

  For the third assertion, we proceed analogously. We determine the
  link points of
  \begin{align*}
    &(\Cpart_0\odot\Hpart_1^{\odot 2} \odot \cdots\odot \Hpart_{i-1}^{\odot 2} \odot\Cpart_{i-1}) \quad\quad\,\,\,\,\,\text{ and }
      (\Hpart_{i}^{\odot 2} \odot\Cpart_{i}\odot \cdots\odot \Hpart_{k}^{\odot 2} \odot\Cpart_{k})  \\
    &(\Cpart_0\odot\Hpart_1^{\odot 2} \odot \cdots\odot \Hpart_{i-1}^{\odot 2} \odot\Cpart_{i-1} \odot \Hpart_i) \,\,\,\,\,\text{ and } \,\,\,\,
      (\Hpart_{i} \odot\Cpart_{i}\odot \cdots\odot \Hpart_{k}^{\odot 2} \odot\Cpart_{k})  \\
    &(\Cpart_0\odot\Hpart_1^{\odot 2} \odot \cdots\odot \Hpart_{i-1}^{\odot 2} \odot\Cpart_{i-1}\odot\Hpart_{i}^{\odot 2}) \text{ and } \quad\quad\,\,\,\,\,
      (\Cpart_{i}\odot \cdots\odot \Hpart_{k}^{\odot 2} \odot\Cpart_{k})
  \end{align*}
  which are $w_i \coloneqq h_ig_i^2$, $u_i\coloneqq h_ig_ig_{i+1}$,
  and $m_i \coloneqq h_ig_{i+1}^2$.  With $m_0 \coloneqq h_0g_1^2$ and
  $w_{k+1} \coloneqq h_{k+1}g_{k+1}^2$, we have
  \begin{equation*}
    I^{s+2} =
    \sum_{i=0}^k\gcd(m_i, w_{i+1})\Cpart_i + \sum_{i=1}^k\gcd(w_{i},u_{i})\Hpart_i
    + \sum_{i=1}^k\gcd(u_{i},m_{i})\Hpart_i.
  \end{equation*}
  Since $\gcd(m_i, w_{i+1}) = g_{i+1}^2q_i$,
  $\gcd(u_i, m_{i}) = g_{i+1}^2\widehat{q_i}$, and
  $\gcd(u_i, w_{i}) = g_ig_{i+1}\widehat{q_i}$, the third assertion
  follows.
\end{proof}

\begin{remark}
  It follows from the corollary and its proof that
  $\Hpart_i = I^{s+1}:g_{i+1}\hat{q}_i$.
\end{remark}

\begin{remark}
  Corollary~\ref{corollary:mingens-shift} shows how the minimal
  generators of $I^{s+1}$ and $I^{s+2}$ are computed from
  $\mingens{I^s}$. For $I^{s+\ell}$ one multiplies the $q_i\Cpart_i$
  with $g_{i+1}^{\ell}$, and $\widehat q_i \Hpart_i$ with
  $g_i^{\ell_1}g_{i+1}^{\ell_2}$ for all $\ell_1$, $\ell_2$ with
  $\ell_1 + \ell_2 = \ell$.
\end{remark}

\begin{corollary}\label{corollary:bivariate:shift}
  With the assumptions and notation of
  Corollary~\ref{corollary:mingens-shift},
  \begin{equation*}
    \mingens{I^{s+1}}=\biguplus_{f\in \mingens{I^s}} f\cdot G_f,
  \end{equation*}
  where, with the notation $v_i\coloneqq \dist_{y}(g_{i},g_{i+1})$,
  \begin{equation*}
    G_f =
    \begin{cases}
      \{g_{i}\} &  \text{if }1\le i\le k\text{ with }\deg_{y}h_i + v_i < \deg_{y}f < \deg_{y}h_{i-1},\\
      \{g_{i},g_{i+1}\} & \text{if }1\le i\le k\text{ with }\deg_{y}h_{i} \le \deg_{y}f  \le \deg_{y}h_{i} + v_i,\\
      \{g_{k+1}\} & \text{if } \deg_{y}f< \deg_{y}h_{k}.
    \end{cases}
  \end{equation*}
\end{corollary}

\begin{proof}
  Observe that for $1\le i \le k+1$,
  \begin{align*}
    S_i
    &\coloneqq    \left\{f\in \mingens{I^s} \longmid
      \deg_{y}h_i +v_i < \deg_{y}f < \deg_{y}h_{i-1} \right\} \\
    &\subseteq    \left\{f\in \mingens{I^s} \longmid
      \deg_{y}h_i  \le \deg_{y}f\le \deg_{y}h_{i-1} \right\} \\
    &= \mingens{q_{i-1}\Cpart_{i-1}}.
  \end{align*}
  Moreover, for $1\le i \le k$,
  \begin{align*}
    T_i &\coloneqq \left\{f\in \mingens{I^s} \longmid
          \deg_{y}h_i \le \deg_{y}f \le \deg_{y}h_i + v_i \right\} = \mingens{\hat{q}_{i}\Hpart_{i}}\subseteq \mingens{q_{i-1}\Cpart_{i-1}}.
  \end{align*}
  Finally,
  \begin{equation*}
    T_{k+1}\coloneqq \left\{f\in \mingens{I^s} \longmid \deg_{y}f < \deg_{y}h_k \right\}\subseteq \mingens{q_{k}\Cpart_k}.
  \end{equation*}
  Note that the sets $S_i$ and $T_i$ for $1\le i\le k+1$ form a
  partition of $\mingens{I^s}$.  The assertion now follows from
  Corollary~\ref{corollary:mingens-shift}.
\end{proof}

\begin{corollary}\label{corollary:back-shift}
  With the assumptions and notation of
  Corollary~\ref{corollary:mingens-shift}, let $\ell\in \{1,2\}$,
  $g\in \mingens{I^{s+\ell}}$ and $i$ such that
  $\deg_{y}g_{i+1}^{s+\ell} \le \deg_{y} g \le \deg_{y}g_i^{s+\ell}$.

  If $\ell = 1$, then
  \begin{equation*}
    g \in
    \begin{cases}
      g_{i}\mingens{I^s} & \text{ if } \deg_{y}g \ge \deg_{y}h_ig_i \\
      g_{i+1}\mingens{I^s} & \text{ if } \deg_{y}g \le \deg_{y}h_ig_i.
    \end{cases}
  \end{equation*}

  If $\ell = 2$, then
  \begin{equation*}
    g \in
    \begin{cases}
      g_{i}\mingens{I^{s+1}}   & \text{ if } \deg_{y}g \ge \deg_{y}h_ig_ig_{i+1}\\
      g_{i+1}\mingens{I^{s+1}} & \text{ if } \deg_{y}g \le \deg_{y}h_ig_ig_{i+1}.
    \end{cases}
  \end{equation*}
\end{corollary}
\begin{proof}
  In the proof of Corollary~\ref{corollary:mingens-shift}, we have
  seen that $h_ig_i$ is the link point between
  $g_{i+1}\Big(q_i\Cpart_i + \widehat q_i\Hpart_i\Big)$ and
  $g_iq_i\Cpart_i$.  Moreover, $\deg_{y}h_ig_ig_{i+1}$ is the link
  point between
  $g_{i+1}^2\Big(q_i\Cpart_i + \widehat q_i\Hpart_i\Big)$ and
  $g_ig_{i+1}\widehat q_i\Hpart_i + g_i^2q_i\Cpart_i$.  The assertion
  follows from a comparison of degrees.
\end{proof}

We now present examples to conclude this section.

\begin{example}\label{example:small}
  Let $I=(y^2,x^2y,x^3)$. We apply
  Corollary~\ref{corollary:I-with-2-persistent-stabilizes} with
  \begin{equation*}
    P=P(I)=\{y^2,x^3\}
  \end{equation*}
  to give a complete description of the generators of large powers of
  $I$.
  \begin{enumerate}
  \item We start by computing $D_P=1$,
    $r = r_{x}(P,D_P) = r_{y}(P,D_P) = 1$, and $s=3$.
  \item Next, we compute that $h^y_1 = x^6y^2$ is the minimal
    generator of $I^3$ with $y$-degree at least $r\cdot 2 + 0=
    2$. Recall $h^y_0 = y^6$ and $h^y_2 = x^9$.
  \item We compute the $(3,y)$-stable components with respect to $P$:
    \begin{align*}
      \Cpart_0 &= I^3:y^2 = (y^4,x^2y^3,x^3y^2,x^5y,x^6), \\
      \Hpart_1 &= I^3:x^{3}y^2 = (y^2,x^2y,x^3), \text{ and} \\
      \Cpart_1 &= I^3:x^6 = (y^2,x^2y,x^3).
    \end{align*}
    Recall that $\mingensast{J}$ denotes the minimal generating set of
    a monomial ideal $J$ excluding the minimal generator of maximal
    $y$-degree.
  \end{enumerate}
  From this we obtain that for all $\ell\ge0$,
  \begin{equation*}
    I^{3+\ell}=(y^4,x^2y^3,x^3y^2,x^5y,x^6)\odot(y^2,x^2y,x^3)^{\odot\ell}\odot(y^2,x^2y,x^3),
  \end{equation*}
  and hence
  \begin{equation*}
    \mingens{I^{3+\ell}}= y^{2 + 2\ell}\mingens{\Cpart_0}\uplus\biguplus_{j=1}^{\ell} x^{3+3j}y^{2 + 2(\ell-j)}\mingensast{\Hpart_1}\uplus x^{6+3\ell}\mingensast{\Cpart_1},
  \end{equation*}
  where $\mingensast{J}$ denotes the set of minimal generators of an
  ideal $J$, without the minimal generator of largest $y$-degree.  In
  particular, we have $\mu(I^{3+\ell})=7+ 2\ell$.
\end{example}

\begin{example}\label{example:big}
  Let
  $I=\big(y^{10},xy^9,x^2y^5,x^4y^4,x^5y^3,x^6y^2,x^{12}y,x^{15}\big)$.
  The computations for this example are done in
  SageMath\footnote{\label{anc}\emph{section\_6.ipynb} as ancillary
    file on the arXiv page \url{https://arxiv.org/abs/2503.21466} of
    this paper.}.  We compute the persistent generators of $I$
  \begin{equation*}
    P(I)=\{y^{10},x^2y^5,x^6y^2,x^{15}\},
  \end{equation*}
  and with $P=P(I)$ we obtain $D_P=40$, $r(P,D_P)=r_y(D,P)=200$, and
  $s=241$.

  With Theorem~\ref{theorem:I^D+l}, we compute
  \begin{equation*}
    I^{241} =  \left((y^{10},x^2y^5)^{201} + (x^2y^5,x^6y^2)^{201} + (x^6y^2,x^{15})^{201}\right)I^{40}.
  \end{equation*}
  Now, with the notation of
  Definition~\ref{definition:s-stable-components},
  \begin{align*}
    h^y_0 = y^{2410},\quad h^y_1 = x^{162}y^{2005},\quad h^y_2 = x^{753}y^{1002},\quad
    h^y_3 = x^{1815}y^{400},\quad h^y_4 = x^{3615},
  \end{align*}
  and hence the $(241,y)$-stable components of $I$ with respect to $P$
  are

  \begin{minipage}{0.4\textwidth}
    \begin{align*}
      \Cpart_0 &= I^{241}:y^{2005},\\
      \Cpart_1 &= I^{241}:x^{162}y^{1002},\\
      \Cpart_2 &= I^{241}:x^{753}y^{400},\\
      \Cpart_3 &= I^{241}:x^{1815}.
    \end{align*}
  \end{minipage}
  \begin{minipage}{0.4\textwidth}
    \begin{align*}
      \Hpart_1 &= I^{241}:x^{160}y^{2005},\\
      \Hpart_2 &= I^{241}:x^{749}y^{1002},\\
      \Hpart_3 &= I^{241}:x^{1806}y^{400},\\
               &\phantom{= I^{241}:x^{1815}\uplus.}
    \end{align*}
  \end{minipage}
  \vspace{1em}

  Analogously to the example above, these ideals can be used to write
  down the minimal generators of $I^{241+\ell}$ explicitly.  Further,
  it follows that for all $\ell\ge 0$,
  \begin{equation*}
    \mu(I^{241+\ell})=1688 + 7\ell.
  \end{equation*}
\end{example}

\section{Runtime in practice}\label{sec:runtime}
The results of Section~\ref{section:min-gens} provide a method for
faster computations of high powers of monomial ideals. We implemented
this method in SageMath~\cite{sagemath} (Version~9.5) as follows: Given a monomial
ideal $I$, we use the lower bounds for $s$ and $D$ as in
Theorem~\ref{theorem:I-with-k-persistent-stabilizes}.  In a
preprocessing step, we compute $I^s$ using
Theorem~\ref{theorem:I^D+l}.  For this, we compute $I^D$ by classical
exponentiation. In the implementation we take advantage of the fact
that all ideals involved are bivariate.  After that, for higher powers
$I^{s+\ell}$ for $\ell\ge 0$, we employ
Theorem~\ref{theorem:I-with-k-persistent-stabilizes}.

We compare the runtime of our implementation against the built-in
ideal exponentiation of Macaulay2~\cite{M2} (Version~1.21; ideals are of type
\texttt{MonomialIdeal}).  The results of our comparison are displayed
in Table~\ref{table:runtime}.  These figures suggest a substantial
speed advantage of our implementation.  Additionally, we included
runtimes for Macaulay2 when using Theorem~\ref{theorem:I^D+l} to
compute $I^{D+\ell}$ from $I^D$, which already demonstrates
significant runtime improvements.  All computations were done on a
machine equipped with an AMD EPYC 9474F 48-Core Processor @4.10GHz
(192 cores) and 1536GB RAM.

We tested the method on four
different ideals, which are provided in an additional
file\footnoteref{anc} and choose $P=P(I)$ in all four cases.
The ideal from Example~\ref{example:big} is $I_2$ in the table. In the
following, we write $D\coloneqq D_P$ and $s \coloneqq D + r(P,D)
+1$. We use $(s,y)$-stable components whenever $r(P,D) = r_y(P,D)$ and
$(s,x)$-stable components otherwise.

\begin{table}[h]
  \begin{tabular}{lrrr}
    \toprule
    \phantom{ideal}& $\mu(I)$  & $|P(I)|$ & $\maxset{}{I}$\\
    \midrule
    $I_1$          &  $5$     &  $3$  &  $7$  \\
    $I_2$          &  $8$     &  $4$  &  $15$ \\
    $I_3$	   &  $10$    &  $7$  &  $12$ \\
    $I_4$	   &  $15$    &  $4$  &  $24$ \\
    \bottomrule
  \end{tabular}
  \caption{An overview of the parameters $\mu(I)$, $|P(I)|$, and
    $\maxset{}{I}=\max\{\maxset{x}{I},\maxset{y}{I}\}$ of the four
    test ideals.}
  \label{table:examples}
\end{table}

\begin{table}[h]
        \scalebox{0.931}{
  \footnotesize
  \begin{tabular}{llbbrrrrr}
    \toprule
    && \multicolumn{2}{c}{preprocessing} &$s+10^2$ &$s+10^3$&$s+10^4$&$s+10^5$&$s+10^6$\\[1ex]
    && \multicolumn{1}{c}{$I^D$}& \multicolumn{1}{c}{$I^s$}&\multicolumn{5}{c}{ $I^{s+\ell}$}\\
    \midrule
    \multirow{4}{2em}{$I_1$}	    && $D=13$    &  $s=45$ & &&&&  \\[1ex]
    &SageMath (\ref{theorem:I^D+l},\,\ref{theorem:I-with-k-persistent-stabilizes})& $0.005$ & $0.01$  & $0.04$  & $0.35$    & $4.30$   & $51.35$ & $584.89$ \\
    &M2 (\ref{theorem:I^D+l})     & $0.0006$  &  $\ast$ & $0.05$  & $1.69$  & $1503.34$ & --       & --\\
    &M2	 (built-in)                  & $\ast$    & $\ast$  & $0.08$  & $22.25$ &$34898.4$  & --       & --\\
    \midrule
    \multirow{4}{2em}{$I_2$}        && $D=40$   &  $s=241$ &  &&&&  \\[1ex]
    &SageMath (\ref{theorem:I^D+l},\,\ref{theorem:I-with-k-persistent-stabilizes})& $0.12$   & $0.30$  & $0.13$  & $0.69$  & $7.44$   & $87.12$ & $980.50$ \\
    &M2 (\ref{theorem:I^D+l})     & $0.02$   & $\ast$  & $0.36$  & $8.53$  &$3152.86$ & --      & --\\
    &M2	(built-in)                   & $\ast$   & $\ast$  & $8.02$  & $411.03$&  --      & --      & --\\
    \midrule
    \multirow{4}{2em}{$I_3$}        && $D=76$  &  $s=989$ &  &&&&  \\[1ex]
    &SageMath (\ref{theorem:I^D+l},\,\ref{theorem:I-with-k-persistent-stabilizes})& $0.73$  & $6.29$  & $0.52$  & $1.49$  & $12.12$  & $139.68$& $1551.92$ \\
    &M2 (\ref{theorem:I^D+l})     & $0.18$  & $\ast$  & $15.39$ & $69.82$ & $18724.5$& --      & --\\
    &M2	(built-in)		     & $\ast$  & $\ast$  &$607.32$ &$5050.71$& --       & --      & --\\
    \midrule
    \multirow{4}{2em}{$I_4$}        && $D=238$   &  $s=2064$ &  &&&&  \\[1ex]
    &SageMath (\ref{theorem:I^D+l},\,\ref{theorem:I-with-k-persistent-stabilizes})& $28.45$   & $47.13$ & $2.28$   & $4.15$  & $20.53$  & $209.50$ & $2305.38$ \\
    &M2 (\ref{theorem:I^D+l})     & $84.81$   & $\ast$  & $71.77$  & $176.13$& $13546.8$& --       & --\\
    &M2	(built-in)		      & $\ast$    & $\ast$  & $>12$h   & --     & --        & --       & --\\
    \bottomrule
  \end{tabular}}
  \caption{\footnotesize The two columns under ``preprocessing'' show
    the times required to compute $I^D$ and $I^s$, where ``$\ast$''
    indicates that the corresponding method does not use that
    preprocessing step. The remaining columns present the additional
    times needed to compute $I^{s+10^i}$ after preprocessing. Cells
    containing ``--'' indicate that the estimated computation time
    would be prohibitively large and is therefore omitted.}
                \label{table:runtime}
        \end{table}

\bibliographystyle{plain}
\bibliography{bibliography}

\end{document}